%% file: main.tex
\documentclass[12pt,a4paper]{report}

\input{opt}
\begin{document} 

\pagenumbering{gobble}
\pagenumbering{arabic}
\pagestyle{customoneside}

\begin{titlepage}
    \centering
    \vspace*{3cm}

    {\Huge\bfseries A Generalised Jordan Normal Form \\ and Its Computation Over Finite Fields\par}
    \vspace{2cm}

    {\Large Alia Bonnet\par}
    \vfill

    {\large
    Bachelor's Thesis in Mathematics\\[0.3cm]
    Submitted to the Faculty of Mathematics, Computer Science and Natural Sciences\\
    Chair of Algebra and Representation Theory\\
    RWTH Aachen University\par
    }
    \vspace{1.5cm}

    {\large
    \today\\[0.5cm]
    Supervisors:\\
    Professor Dr. Niemeyer\\
    Professor Dr. Horn
    }
\end{titlepage}

\chapter*{Abstract}
\begin{quote}
	We present a generalised version of the classical Jordan normal form, extending its applicability to matrices over arbitrary fields. This form is constructed by decomposing the underlying vector space into primary cyclic subspaces. We then introduce an efficient algorithm for computing the Jordan normal form over finite fields. Finally, we illustrate the utility of this approach by summarising several results from Franceschi (2020), where the Jordan normal form plays a central role in addressing the conjugacy problem in finite classical groups.
\end{quote}

\newpage

\tableofcontents

\newpage


\input{parts/00intro}

\input{parts/01pre}

\input{parts/02jnf}

\input{parts/04alg}

\input{parts/03special}

\input{parts/05conj}

\printbibliography
\end{document}

%% file: opt.tex
\usepackage[utf8]{inputenc}
\usepackage[T1]{fontenc}
\usepackage{lmodern}
\usepackage{yhmath}
\usepackage{ upgreek }
\usepackage{comment}
\usepackage{xcolor}
\usepackage{aligned-overset}
\usepackage[
backend=biber,
style=alphabetic,
]{biblatex}

\usepackage{tikz}
\usetikzlibrary{patterns}

\usepackage{graphicx}
\graphicspath{ {./pics/} }

\usepackage[labelsep=none,labelformat=empty]{caption}
\usepackage{kantlipsum}

\usepackage{csquotes}
\usepackage[toc,page]{appendix}
\usepackage[english]{babel}
\usepackage{float}
    \restylefloat{table}
\usepackage[
  bookmarksopen, 
  bookmarksnumbered, 
colorlinks = false,
  allbordercolors = cyan, 
  pdftitle={The generalised Jordan normal form},
  pdfauthor={Alia Bonnet},
  pdfstartview={FitH}, 
  ]{hyperref}

\usepackage{geometry}
\usepackage{titleps}
    \newpagestyle{customoneside}{%
    \sethead{}{}{}
    \setfoot{}{\thepage}{}
    }
    \settitlemarks{section,subsection,subsubsection}
\usepackage{setspace}

\usepackage{tocloft}
    
\usepackage[ruled,vlined,english]{algorithm2e}
\RestyleAlgo{ruled}
\SetKw{Continue}{continue}
\SetKwProg{Fn}{Function}{:}{end}
\DontPrintSemicolon 

\usepackage{amsmath}
\usepackage{amssymb}
\usepackage{amsfonts}
\usepackage{mathtools}
\usepackage{bbm}
\usepackage{dsfont}

\DeclarePairedDelimiter\matspan{\langle}{\rangle} 
\DeclarePairedDelimiter\ceil{\lceil}{\rceil}

\DeclarePairedDelimiter\erz{\langle}{\rangle}

\DeclarePairedDelimiter\abs{\lvert}{\rvert}

\DeclareMathOperator{\kernel}{Ker}
\DeclareMathOperator{\bild}{Im}

\DeclareMathOperator{\lcm}{lcm}
\DeclareMathOperator{\JNF}{JNF}

\newcommand{\GL}[2]{\operatorname{GL}(#1,#2)}
\newcommand{\Sp}[2]{\operatorname{Sp}(#1,#2)}
\newcommand{\U}[2]{\operatorname{U}(#1,#2)}
\newcommand{\SL}[2]{\operatorname{SL}(#1,#2)}
\newcommand{\SO}[3]{\operatorname{SO}^{#1}(#2,#3)}
\newcommand{\SU}[2]{\operatorname{SU}(#1,#2)}
\newcommand{\F}{\mathbb{F}}
\newcommand\On{\mathcal{O}}

    \usepackage{tabularx}

    \let\oldforall\forall
    \let\forall\undefined
    \DeclareMathOperator{\forall}{\,\oldforall\!}
    
    \let\oldexists\exists
    \let\exists\undefined
    \DeclareMathOperator{\exists}{\,\oldexists\!}

\definecolor{ColDefi}{cmyk}{0.23,0.05,0.05,0}
\definecolor{LUHblue}{cmyk}{1,0.49,0,0.38}
\definecolor{DefinColor}{RGB}{225, 237, 164} 
\definecolor{BemerkColor}{RGB}{235,243,234}  
\definecolor{BspColor}{RGB}{240,190,190}    
\definecolor{SatzColor}{RGB}{240,200,185}    
\definecolor{LemmaColor}{RGB}{210,230,180}   
\definecolor{KorColor}{RGB}{250,229,232}     
\definecolor{PropColor}{RGB}{245,235,180}    
\usepackage{aliascnt}
\usepackage{cleveref}

\newenvironment{proof}{
    {\textit{Proof:\;} }%
    }{%
    \hspace*{\fill} $\square$%
    } 

\newenvironment{citeproof}{
	{\textit{Proof:\;} }%
}{%
} 

\usepackage[]{mdframed}
    \mdfsetup{skipabove=5pt,skipbelow=5pt}

\usepackage{nicematrix}

\usepackage[amsmath,thmmarks]{ntheorem}

    {\theorembodyfont{\normalfont}
    \theoremstyle{break} 
    \newmdtheoremenv[
        middlelinewidth = 2 ,%
        roundcorner = 3 pt ,%
        innertopmargin = 6,
        skipabove = 6,
        topline=false,
        bottomline=false,
        rightline=false,
        leftline=false,
        backgroundcolor = DefinColor ,%
    	ntheorem,
        ]{defi}{Definition}[section]
    }

    {
    \theorembodyfont{\normalfont}
    \theoremstyle{break} 
    \newaliascnt{lem}{defi}
    \newmdtheoremenv[
        middlelinewidth = 2 ,%
        roundcorner = 3 pt ,%
        innertopmargin = 6,
        skipabove = 6,
        topline=false,
        bottomline=false,
        rightline=false,
        leftline=false,
        backgroundcolor = LemmaColor ,%
    	ntheorem,
        ]{lem}[lem]{Lemma} 
    	}
    \aliascntresetthe{lem}

    {
    \theorembodyfont{\normalfont}
    \theoremstyle{break} 
    \newaliascnt{theorem}{defi}
    \newmdtheoremenv[
        middlelinewidth = 2 ,%
        roundcorner = 3 pt ,%
        innertopmargin = 6,
        skipabove = 6,
        topline=false,
        bottomline=false,
        rightline=false,
        leftline=false,
        backgroundcolor = SatzColor ,%
    	ntheorem,
        ]{theorem}[theorem]{Theorem}
        }
    \aliascntresetthe{theorem}

    {
    \theorembodyfont{\normalfont}
    \theoremstyle{break} 
    \newaliascnt{kor}{defi}
    \newmdtheoremenv[
        middlelinewidth = 2 ,%
        roundcorner = 3 pt ,%
        innertopmargin = 6,
        skipabove = 6,
        topline=false,
        bottomline=false,
        rightline=false,
        leftline=false,
        backgroundcolor = KorColor ,%
        middlelinecolor = green!70!black ,%
    	ntheorem,
        ]{kor}[kor]{Corollary}
        }
    \aliascntresetthe{kor}

    {
    	\theorembodyfont{\normalfont}
    	\theoremstyle{break} 
    	\newaliascnt{prop}{defi}
    	\newmdtheoremenv[
    	middlelinewidth = 2 ,%
    	roundcorner = 3 pt ,%
    	innertopmargin = 6,
    	skipabove = 6,
    	topline=false,
    	bottomline=false,
    	rightline=false,
    	leftline=false,
    	backgroundcolor = PropColor ,%
    	middlelinecolor = green!70!black ,%
    	ntheorem,
    	]{prop}[prop]{Proposition}
    }
    \aliascntresetthe{prop}

    \newaliascnt{bem}{defi}
    {\theorembodyfont{\normalfont}
    \theoremstyle{break} 
    \newmdtheoremenv[
        middlelinewidth = 2 ,%
        roundcorner = 3 pt ,%
        innertopmargin = 6,
        skipabove = 6,
        topline=false,
        bottomline=false,
        rightline=false,
        leftline=false,
        backgroundcolor = BemerkColor ,%
        middlelinecolor = green!70!black ,%
    	ntheorem,
        ]{bem}[bem]{Remark}
    }
    \aliascntresetthe{bem}
    
    \newaliascnt{bsp}{defi}
    {\theorembodyfont{\normalfont}
    \theoremstyle{break} 
    \newmdtheoremenv[
        middlelinewidth = 2 ,%
        roundcorner = 3 pt ,%
        innertopmargin = 6,
        skipabove = 6,
        topline=false,
        bottomline=false,
        rightline=false,
        leftline=false,
        backgroundcolor = BspColor ,%
        middlelinecolor = green!70!black ,%
    	ntheorem,
        ]{bsp}[bsp]{Example}
    }
    \aliascntresetthe{bsp}

    \crefname{defi}{Definition}{Definitions}
    \crefname{lem}{Lemma}{Lemmata}
    \crefname{theorem}{Theorem}{Theorems}
    \crefname{prop}{Proposition}{Propositions}
    \crefname{kor}{Corollary}{Corollaries}
    \crefname{bem}{Remark}{Remarks}
    \crefname{bsp}{Example}{Examples}
   \crefname{equation}{Equation}{Equations}
   \crefname{algocf}{Algorithm}{Algorithms}
   \crefname{algorithm}{Algorithm}{Algorithms} 
   \crefname{proof}{Proof}{Proofs}

    \usepackage[shortlabels]{enumitem}
    \setlist[itemize]{nosep}
    \setlist[enumerate]{nosep}

%% file: parts/00intro.tex
\chapter{Introduction}

The question of matrix similarity is a classical one in linear algebra. For a field $\F$ and some positive integer $n\in \mathbb{N}$, one may consider the following problems:
\begin{enumerate}
	\item Given two matrices $A, B \in \GL{n}{\F}$, determine whether they are similar or not. 
	\item If they are similar, compute a conjugating matrix $X \in \GL{n}{\F}$. 
	\item List a representative for each conjugacy class of $\GL{n}{\F}$. 
\end{enumerate}
They can be readily solved by using normal forms. The most commonly studied forms are the rational canonical form (also known as the Frobenius normal form) and the Jordan normal form. The Jordan form, however, is traditionally defined only over algebraically closed fields such as $\mathbb{C}$. In this thesis, we aim to extend the notion of the Jordan normal form to arbitrary fields, following the approach outlined in \cite{Ple08}. Moreover, we provide practical algorithms for computing this generalized Jordan form, which we have implemented in \texttt{GAP} for finite fields.

The construction of the Jordan normal form relies on analyzing the action of a matrix $A \in \F^{n \times n}$ on the vector space $V = \F^n$. By decomposing $V$ into $A$-invariant subspaces, one obtains, in a sense, a corresponding decomposition of $A$ itself. The proofs in this thesis are expressed in terms of matrices, rather than modules, to reflect the computational approach used in practice.

%% file: parts/01pre.tex
\chapter{Background}
We begin by recounting a few basic facts from linear algebra, following the outlines of \cite{Har70},\cite{Hal78} and \cite{Hof71}. This serves to both as a presentation of the general concepts used in the rest of this paper and to fix notation.

For the entirety of this thesis, let $\F$ be a field, $n \in \mathbb{N}$ and $V$ be an $n$-dimensional vector space, unless stated otherwise. 

\section{The linear groups and conjugation}

\begin{defi}
	\begin{enumerate}
		\item We call the set of all invertible matrices in $\F^{n \times n}$ the \textbf{general linear group} and denote it by $\GL{n}{\F}$. Furthermore, the \textbf{special linear group} $\SL{n}{\F}$ is the subset of all elements of $\GL{n}{\F}$ having determinant 1.
		\item We denote the set of automorphisms on $V$ by $\mathrm{GL}(V)$.
	\end{enumerate}
\end{defi}

\begin{defi}
	Let $\alpha \in \mathrm{End}(V)$ be an endomorphism and $\mathcal{B} = (b_1,\dots,b_n)$ be a basis of $V$. We call the matrix with $\alpha(b_i)$ as its $i$-th row \textbf{the matrix representation of $\alpha$ in terms of $\mathcal{B}$} and denote it by $M_{\mathcal{B}}(\alpha)\in \F^{n\times n}$.
\end{defi}

\begin{bem}\label{groupiso}
	Since homomorphisms are fully determined by their action on basis vectors, we can identify each endomorphism $\alpha \in \mathrm{End}(V)$ as a matrix $M_{\mathcal{B}}(\alpha) \in \GL{n}{\F}$ by choosing a basis $\mathcal{B}$ of $V$. So:
	\[\mathrm{GL}(V) \simeq \GL{n}{\F}.\]
\end{bem}

For $A \in \F^{n\times n}$ and $B \in \GL{n}{\F}$ we write $A^B := B^{-1}AB$ for the conjugation.

\begin{defi}
	Let $A,B \in \F^{n \times n}$. We call $A$ and $B$ \textbf{conjugate in $\GL{n}{\F}$} or \textbf{similar} if there exists a matrix $C \in \GL{n}{\F}$ such that 
	$$A^C = B.$$
\end{defi}

Similar matrices correspond to the same linear mappings under different bases. Thus, studying similarity classes of matrices is equivalent to classifying linear operators up to a change of basis.

\begin{theorem}
	Let $A, B \in \F^{n\times n}$. Then the following are equivalent:
	\begin{enumerate}
		\item $A$ and $B$ are similar.
		\item There exist bases $\mathcal{B}, \mathcal{B}'$ and a linear mapping $\alpha \in \mathrm{GL}(V)$ such that $M_{\mathcal{B}}(\alpha) = A$ and $M_{\mathcal{B'}}(\alpha) = B$, where $M_{\mathcal{B}}(\alpha)$ denotes the matrix representation of $\alpha$ in the basis $\mathcal{B}$.
	\end{enumerate}
\end{theorem}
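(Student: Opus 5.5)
The plan is to prove both implications from one change-of-basis formula. First I fix the conventions from the preceding definition. For a basis $\mathcal{B}=(b_1,\dots,b_n)$, I read $M_{\mathcal{B}}(\alpha)$ as the matrix whose $i$-th row is the coordinate row vector of $\alpha(b_i)$ with respect to $\mathcal{B}$. If $x\in\F^{1\times n}$ is the coordinate row of $v$, then the coordinate row of $\alpha(v)$ is $x\,M_{\mathcal{B}}(\alpha)$. The statement allows arbitrary $A,B\in\F^{n\times n}$, so I will work with $\alpha\in\mathrm{End}(V)$ and note that $\alpha\in\mathrm{GL}(V)$ exactly when $A$ (equivalently $B$) is invertible. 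This matches \cref{groupiso}.

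The key lemma is the change-of-basis formula. Let $\mathcal{B}'=(b_1',\dots,b_n')$ be a second basis, and let $C\in\F^{n\times n}$ have as its $j$-th row the coordinates of $b'_j$ in $\mathcal{B}$, so that $b_j'=\sum_k C_{jk}b_k$. Then $C$ is invertible, since its inverse is the analogous matrix with the roles of $\mathcal{B}$ and $\mathcal{B}'$ exchanged. I will show
\[ M_{\mathcal{B}'}(\alpha)=C\,M_{\mathcal{B}}(\alpha)\,C^{-1}. \]
The computation goes as follows. The $\mathcal{B}$-coordinates of $\alpha(b'_j)$ form the $j$-th row of $C\,M_{\mathcal{B}}(\alpha)$. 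Converting $\mathcal{B}$-coordinates $y$ into $\mathcal{B}'$-coordinates means multiplying by $C^{-1}$ on the right, because $v=yb=(yC^{-1})Cb=(yC^{-1})b'$ in row notation.

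For $(2)\Rightarrow(1)$, the lemma gives $B=CAC^{-1}=A^{C^{-1}}$, and $C^{-1}\in\GL{n}{\F}$, so $A$ and $B$ are similar.

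For $(1)\Rightarrow(1)$'s converse, $(1)\Rightarrow(2)$, suppose $A^{D}=B$ with $D\in\GL{n}{\F}$. I take $V=\F^{1\times n}$ (or any $n$-dimensional $V$ with a fixed basis $\mathcal{B}$) and let $\mathcal{B}$ be the standard basis. I define $\alpha$ by $x\mapsto xA$, so that $M_{\mathcal{B}}(\alpha)=A$. Next I set $C:=D^{-1}$ and let $\mathcal{B}'$ consist of the vectors whose $\mathcal{B}$-coordinates are the rows of $C$. These form a basis because $C$ is invertible. The lemma then yields $M_{\mathcal{B}'}(\alpha)=D^{-1}AD=B$.

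I expect the main obstacle to be keeping the row convention consistent, so that the formula comes out as $CAC^{-1}$ rather than $C^{-1}AC$, and matching it to the paper's definition $A^C=C^{-1}AC$. The argument is otherwise routine. I will also state explicitly the $\mathrm{End}(V)$ versus $\mathrm{GL}(V)$ caveat mentioned above.
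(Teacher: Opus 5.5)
Your proposal is correct and takes essentially the same route as the paper, which only cites \cite[p.~94]{Hof71}: there the change-of-basis formula is proved in column convention and both directions of the similarity statement are read off from it. Your row-convention version $M_{\mathcal{B}'}(\alpha)=C\,M_{\mathcal{B}}(\alpha)\,C^{-1}$, with $C=D^{-1}$ to match $A^{D}=D^{-1}AD$, is the right translation, and your $\mathrm{End}(V)$ versus $\mathrm{GL}(V)$ caveat is well taken, since with $\alpha\in\mathrm{GL}(V)$ as literally stated the equivalence fails for singular similar matrices.
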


\begin{citeproof}
	See \cite[p. 94]{Hof71}.
\end{citeproof}

\begin{bem}
	Similarly, given a basis $\mathcal{B} = (b_1,\dots,b_n)$ of $V$, we may identify each element $e = \sum_{i=1}^{n}a_i b_i \in V$ with $a_1,\dots,a_n \in \F$, as a vector $v_{\mathcal{B}}(e) = (\alpha_1,\dots,\alpha_n)\in \F^n$. So 
	$$V \simeq \F^n.$$
	For this reason, we let $V = \F^n$ from here on out. 
	
	Applying an endomorphism to an element $e \in V$ then corresponds to a matrix multiplication, i.e. 
	$$v_{\mathcal{B}}(\alpha(e)) = v_{\mathcal{B}}(e)M_{\mathcal{B}}(\alpha).$$
\end{bem}

\begin{citeproof}
	See \cite[p. 84]{Hof71}.
\end{citeproof}

\section{Minimal and characteristic polynomials} 
Let $\F[X]$ be a polynomial ring over $\F$. 

Given a polynomial $p \in \F[X]$ and a matrix $A \in \F^{n\times n}$, we may define $p(A)$ in the following way. 

\begin{bem}
	Let $A \in \F^{n \times n}$ and $p = p(X) = a_0X^0 + \dots + a_dX^d \in \F[X]$ be a polynomial. By defining $p(A) := a_0A^0 + \dots + a_dA^d$, the \textbf{evaluation mapping}
	$$\varepsilon_A: \F[X] \rightarrow \F^{n \times n}: p \mapsto p(A)$$
	is a homomorphism of rings.  
\end{bem}

\begin{proof}
	We have $\varepsilon_A(p + q) = (p+q)(A) = p(A) + q(A) = \varepsilon_A(p) + \varepsilon_A(q)$ and $\varepsilon_A(rp) = (rp)(A) = r(p(A)) = r\varepsilon_A(p)$ for all $p,q \in \F[X]$ and $r \in \F$. 
\end{proof}

\begin{prop}\label{minpol}
	Let $A \in \F^{n \times n}\backslash\{0\}$. Then the kernel of the evaluation mapping $$\kernel(\varepsilon_A)= \{p \in \F[X]: p(A) = 0\}$$
	is generated by a unique monic polynomial $p \in \F[X]$. 
\end{prop}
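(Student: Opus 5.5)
The plan is to show that $\kernel(\varepsilon_A)$ is a nonzero ideal of $\F[X]$ and then use that $\F[X]$ is a Euclidean domain, so every ideal is principal. The previous remark shows $\varepsilon_A$ is additive and $\F$-linear. For the ideal property I will also check multiplicativity: $(pq)(A)=p(A)q(A)$, because powers of $A$ commute and $A^iA^j=A^{i+j}$. Then for $p \in \kernel(\varepsilon_A)$ and any $q\in\F[X]$ we get $(qp)(A)=q(A)p(A)=0$. So the kernel is an ideal; it is simply the kernel of a ring homomorphism.

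Next I show the kernel is nonzero. Since $\dim_\F \F^{n\times n}=n^2$, the $n^2+1$ matrices $A^0,A^1,\dots,A^{n^2}$ are linearly dependent. A nontrivial dependence $\sum_{i=0}^{n^2} a_iA^i=0$ gives a nonzero polynomial $\sum a_iX^i$ in the kernel. (The Cayley--Hamilton theorem would give degree $n$, but it has not been stated yet, so the dimension count is the cleaner route.) The kernel is also a proper ideal, since the constant $1$ maps to $A^0=I\neq 0$; this is all that is needed from $A\neq 0$.

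Now pick a nonzero $p$ in the kernel of minimal degree and scale it to be monic. For any $f$ in the kernel, division with remainder gives $f=qp+r$ with $r=0$ or $\deg r<\deg p$. Then $r=f-qp$ lies in the kernel, and minimality forces $r=0$, so $p$ divides $f$. Hence $\kernel(\varepsilon_A)=(p)$.

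Uniqueness: if $p'$ is another monic generator, then $p\mid p'$ and $p'\mid p$. So they have equal degree and differ by a unit of $\F[X]$, i.e. a nonzero scalar, and monicity forces $p=p'$. No step is a real obstacle. The only point needing care is the nonzero-kernel argument, which relies on the finite dimension of $\F^{n\times n}$, together with being explicit that multiplicativity of $\varepsilon_A$ is needed and was not covered by the previous remark.
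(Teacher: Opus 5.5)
Your proof is correct and is essentially the paper's argument: $\kernel(\varepsilon_A)$ is an ideal of the principal ideal domain $\F[X]$, so it has a generator that is unique up to a unit. You unpack the principal-ideal step via division with remainder, and you make explicit the multiplicativity of $\varepsilon_A$, which the paper's remark asserts but never checks. Your dimension count showing the kernel is nonzero fills a gap in the paper, which only notes that the ideal is proper, whereas nonzero-ness is what guarantees a monic generator; also, properness uses only $I\neq 0$, so the hypothesis $A\neq 0$ plays no role anywhere, contrary to your remark.
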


\begin{proof}
	$\kernel(\varepsilon_A)$ is a proper ideal of $\F[X]$. Since $\F$ is a field, $\F[X]$ is a principal ideal domain. Therefore any ideal of $\F[X]$ is generated by a single element $p$ in $\F[X]$, which is unique up to a unit in $\F$.
\end{proof}
	
\begin{defi}
	We call the monic polynomial $q$ generating $\kernel(\varepsilon_A)$, described in \cref{minpol} the \textbf{minimal polynomial of} $A$ and denote it by $\mu_A := q$.
\end{defi}

We note another characterization of the minimal polynomial, which warrants its naming.

\begin{theorem}
	Let $A \in \F^{n \times n}$. Then the minimal polynomial $\mu_A \in \F[X]$ of $A$ is uniquely determined by the following three properties:
	\begin{enumerate}
		\item $\mu_A$ is monic.
		\item $\mu_A(A) = 0$.
		\item $\mu_A|p$ for all $p \in \kernel(\varepsilon_A)$.
	\end{enumerate}
\end{theorem}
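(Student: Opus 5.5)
The theorem has two parts. First, $\mu_A$ as defined after \cref{minpol} has the three listed properties. Second, any polynomial with those three properties equals $\mu_A$. I would prove these in that order, working entirely with the ideal $\kernel(\varepsilon_A)$ and its generator, as in \cref{minpol}.

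For existence, recall that $\mu_A$ is the monic generator of $\kernel(\varepsilon_A)$, so property 1 holds by definition. Since $\mu_A \in \kernel(\varepsilon_A)$, we have $\mu_A(A) = \varepsilon_A(\mu_A) = 0$, which is property 2. For property 3, any $p \in \kernel(\varepsilon_A) = (\mu_A)$ has the form $p = r\mu_A$ with $r \in \F[X]$, so $\mu_A \mid p$.

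The degenerate cases need a brief remark, because \cref{minpol} was stated for $A \neq 0$. If $A = 0$ and $n \geq 1$, then $\kernel(\varepsilon_A) = (X)$, so the argument above applies unchanged. The kernel is never the zero ideal: $\F^{n\times n}$ has dimension $n^2$, so $I, A, \dots, A^{n^2}$ are linearly dependent and some nonzero polynomial annihilates $A$. Hence a monic generator always exists.

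For uniqueness, let $q \in \F[X]$ satisfy properties 1 to 3. By property 2, $q \in \kernel(\varepsilon_A)$, so property 3 applied to $\mu_A$ (with $p = q$) gives $\mu_A \mid q$. Conversely, property 3 for $q$, applied to $p = \mu_A \in \kernel(\varepsilon_A)$, gives $q \mid \mu_A$. Two polynomials dividing each other over the integral domain $\F[X]$ differ by a unit, and the units of $\F[X]$ are the nonzero constants. So $q = c\mu_A$ with $c \in \F\setminus\{0\}$, and since both are monic, comparing leading coefficients gives $c = 1$, i.e.\ $q = \mu_A$.

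The only step needing care is this last one: mutual divisibility together with monicity forces equality. The argument is that $\mu_A = sq$ and $q = t\mu_A$ give $\mu_A = st\mu_A$, and cancelling the nonzero $\mu_A$ in the integral domain $\F[X]$ yields $st = 1$. Hence $s$ and $t$ are nonzero constants, and monicity gives $s = t = 1$. This also justifies the minimal-degree characterisation that motivates the name, since $\mu_A$ divides every nonzero annihilating polynomial.
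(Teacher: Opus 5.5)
Your proposal is correct and follows the paper's proof. The three properties come straight from $\mu_A$ being the monic generator of $\kernel(\varepsilon_A)$, and uniqueness follows from the mutual divisibility $q \mid \mu_A$, $\mu_A \mid q$ together with monicity. Your extra remarks make explicit details the paper leaves implicit: that $\kernel(\varepsilon_A)\neq 0$ by the linear dependence of $I, A, \dots, A^{n^2}$, the case $A=0$ excluded from \cref{minpol}, and the cancellation argument for the final step.
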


\begin{proof}
	All three properties follow directly from the definition of $\mu_A$. Now let $q \in \F[X]$ such that all three points hold. Then $q|\mu_A$. But $q(A) = 0$ so $q \in \kernel(\varepsilon_A)$ and as such $\mu_A|q$. Since $q$ is also monic we have $\mu_A = q$.
\end{proof}

Similarly, we may also consider minimal polynomials for vectors. 

\begin{defi}
	Let $v \in V\backslash\{0\}$ and $A \in \F^{n\times n}$. 
	If $p \in \F[X]$ is monic such that \begin{enumerate}
		\item $vp(A) = 0$ and 
		\item $p\mid q$ for all $q \in \F[X]$ with $vq(A) = 0$,
	\end{enumerate}
	we call $p$ the \textbf{minimal polynomial of $v$ with respect to $A$} and denote it by $\mu_{A,v} := p$.
\end{defi}

The definition above is well-defined: since there exists a minimal $k \in \mathbb{N}$, $k \leq n$, such that $(v,vA, \dots, vA^{k-1})$ is linearly dependent, there exists a unique $\F$-linear dependency such that $(a_0,\dots,a_k)$ with $a_k = 1$ and $a_0v + \dots + a_kvA^k = 0$.

\begin{prop}
	Let $A \in \F^{n\times n}$. Then for any $v \in V$ we have
	$$\mu_{A,v}|\mu_A,$$
	i.e., the minimal polynomial of $v$ divides the minimal polynomial of $A$. 
\end{prop}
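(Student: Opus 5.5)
The plan is to use the defining divisibility property of $\mu_{A,v}$ directly. Property 2 in the definition of the minimal polynomial of $v$ says that $\mu_{A,v}$ divides every $q \in \F[X]$ with $vq(A) = 0$. So it suffices to show that $q = \mu_A$ has this property, that is, $v\mu_A(A) = 0$.

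This holds because $\mu_A$ generates $\kernel(\varepsilon_A)$ by \cref{minpol}. In particular $\mu_A(A) = 0$ as a matrix, and therefore
\[
v\mu_A(A) = v \cdot 0 = 0.
\]
Applying property 2 with $q = \mu_A$ gives $\mu_{A,v} \mid \mu_A$.

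Two degenerate cases need attention. The definition of $\mu_{A,v}$ assumes $v \neq 0$, and \cref{minpol} assumes $A \neq 0$.
\begin{itemize}
\item For $v = 0$, the natural convention is $\mu_{A,0} = 1$, since $v \cdot 1(A) = 0$ already. Then $1 \mid \mu_A$ holds trivially.
\item For $A = 0$, the kernel of $\varepsilon_A$ is $X\F[X]$, so $\mu_A = X$. The argument above goes through unchanged, because $v \cdot 0 = 0$.
\end{itemize}

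The only real subtlety is the well-definedness of $\mu_{A,v}$, which the paper justifies just before the statement; we take it as given. An alternative to invoking property 2 is to divide with remainder, $\mu_A = s\,\mu_{A,v} + r$ with $\deg r < \deg \mu_{A,v}$. Then $v r(A) = 0$, and minimality of the degree of $\mu_{A,v}$ forces $r = 0$. I would mention this only if property 2 were not considered part of the definition.
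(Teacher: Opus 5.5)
Your proposal is correct and takes the same route as the paper: its proof is the one-liner $v\mu_A(A) = v\cdot 0 = 0$, followed by the divisibility clause in the definition of $\mu_{A,v}$. Your extra remarks on the degenerate cases $v=0$ and $A=0$ are sound and just make explicit what the paper leaves implicit.
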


\begin{proof}
	Since $v\mu_A(A) = v\cdot0 = 0$ we have $\mu_{A,v}|\mu_A$. 
\end{proof}

\begin{defi}
	Let $A \in \F^{n\times n}$. We call
		$$\chi_A(X) := \mathrm{det}(I - XA) \in \F[X]$$
	the \textbf{characteristic polynomial} of $A$. 
\end{defi}

\begin{theorem}[Cayley-Hamilton]
	Let $A \in \F^{n\times n}$. Then
	$$\chi_A(A) = 0$$
	i.e., the minimal polynomial of $A$ divides the characteristic polynomial of $A$. 
\end{theorem}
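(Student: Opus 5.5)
I will read the characteristic polynomial as the usual $\chi_A(X)=\det(XI-A)$. With the literal formula $\det(I-XA)$ one gets the reversed polynomial $X^n\det(X^{-1}I-A)$, and the statement fails for it. For example, $A=(2)$ gives $1-2X$, which does not vanish at $2$. Once $\chi_A(A)=0$ is known, the ``i.e.'' part is immediate: $\chi_A\in\kernel(\varepsilon_A)$, and by \cref{minpol} this kernel is generated by $\mu_A$, so $\mu_A\mid\chi_A$. The real work is $\chi_A(A)=0$, and I will use the adjugate argument.

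First, consider the matrix $XI-A\in\F[X]^{n\times n}$ over the commutative ring $\F[X]$. Let $B(X)=\operatorname{adj}(XI-A)$ be its adjugate. Cramer's identity holds over any commutative ring, so
\[(XI-A)\,B(X)=\det(XI-A)\,I=\chi_A(X)\,I.\]
Each entry of $B(X)$ is, up to sign, an $(n-1)\times(n-1)$ minor of $XI-A$, so it is a polynomial of degree at most $n-1$. Hence we can write $B(X)=\sum_{k=0}^{n-1}B_kX^k$ with constant matrices $B_k\in\F^{n\times n}$. Similarly write $\chi_A(X)=\sum_{k=0}^{n}c_kX^k$ with $c_n=1$.

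Next, compare coefficients of $X^k$ on both sides, viewing $\F[X]^{n\times n}$ as $\F^{n\times n}[X]$. Setting $B_{-1}=B_n=0$, this gives
\[B_{k-1}-AB_k=c_kI\qquad (0\le k\le n).\]
Multiply the $k$-th equation on the left by $A^k$ and sum over $k$. The left side telescopes:
\[\sum_{k=0}^{n}\bigl(A^kB_{k-1}-A^{k+1}B_k\bigr)=A^{n}B_{n-1}-A^{n}B_{n-1}+\dots=0.\]
The right side is $\sum_k c_kA^k=\chi_A(A)$. Hence $\chi_A(A)=0$.

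The main obstacle is the step one is tempted to take naively: substituting $X=A$ directly into $(XI-A)B(X)=\chi_A(X)I$. This is not justified, because evaluation at a matrix is not a ring homomorphism on $\F^{n\times n}[X]$ when the coefficients do not commute with $A$. (Substituting $X=A$ into $\det(XI-A)$ as ``$\det(A-A)=0$'' is simply wrong.) The coefficient comparison followed by left multiplication by $A^k$ is exactly what avoids this issue. The only other point needing care is that the adjugate identity holds over the commutative ring $\F[X]$, which is standard.
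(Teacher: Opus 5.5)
Your proof is correct. Reading $\chi_A$ as $\det(XI-A)$ is the right repair: the paper's $\det(I-XA)$ is the reversed polynomial, and the paper itself relies on $\det(XI-A)$ elsewhere. It uses $\deg\chi_A=n$ in the proof of the cyclicity criterion, and $\chi_{M_p}=p$ for companion matrices. There is one slip in your aside: $A=(2)$ is not a counterexample in characteristic $3$, where $1-2\cdot 2=-3=0$. The zero matrix, for which $\det(I-X\cdot 0)=1$, is a counterexample over every field.

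The paper gives no argument of its own here, only a reference to Hoffman--Kunze, so the comparison is with the proof given there. That proof also rests on the adjugate identity over a commutative ring, but it substitutes $X\mapsto A$ \emph{before} taking determinants. From $e_iA=\sum_j a_{ij}e_j$ one gets $\sum_j e_jM_{ij}=0$ for all $i$, where $M=(\delta_{ij}A-a_{ij}I)_{i,j}$ has entries in the commutative ring $\F[A]$. Multiplying the $i$-th relation on the right by $\operatorname{adj}(M)_{ki}$ and summing over $i$ gives $e_k\det(M)=0$ for every $k$. Finally, $\det(M)=\chi_A(A)$ because determinants commute with the ring homomorphism $\F[X]\to\F[A]$.

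That route never meets the non-commutativity problem you point out, so it needs no coefficient comparison. The cost is working with basis vectors and a matrix whose entries are themselves matrices. Your version stays inside $\F^{n\times n}[X]$ as a pure matrix identity. It handles the non-commutativity explicitly through coefficient matching and telescoping, which makes clear why the naive substitution fails and what replaces it.
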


\begin{citeproof}
	See \cite[pp. 202-204]{Hof71}.
\end{citeproof}

\begin{prop}\label{samefactors}
	Let $A \in \F^{n\times n}$. Then $\chi_A$ and $\mu_A$ share the same distinct irreducible factors, though possibly with differing multiplicities. 
\end{prop}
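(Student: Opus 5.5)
The proof splits into the two inclusions between the sets of irreducible factors. Throughout I read $\chi_A$ as the usual characteristic polynomial $\det(XI-A)$, which is monic of degree $n$. This is the polynomial for which the Cayley--Hamilton theorem is meaningful. With the reversed expression $\det(I-XA)$ literally as written in the definition, the factor $X$ of $\mu_A$ for singular $A$ would be lost, and other irreducible factors would be replaced by their reciprocal polynomials.

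The first inclusion is immediate from the Cayley--Hamilton theorem. Since $\chi_A(A)=0$, we have $\chi_A\in\kernel(\varepsilon_A)$, so $\mu_A\mid\chi_A$. Hence every irreducible factor of $\mu_A$ divides $\chi_A$.

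For the converse I want to show $\chi_A\mid\mu_A^n$. Then any irreducible $p$ dividing $\chi_A$ divides $\mu_A^n$, and since $p$ is prime in the principal ideal domain $\F[X]$, it divides $\mu_A$. To get this divisibility, write $\mu_A=\sum_{k=0}^d a_kX^k$ and consider the polynomial identity in $\F[X]^{n\times n}$
\[
\mu_A(X)I-\mu_A(A)=\sum_{k=1}^d a_k\,(X^kI-A^k).
\]
Each summand factors as $X^kI-A^k=(XI-A)\sum_{j=0}^{k-1}X^{k-1-j}A^j$; this is valid because $XI$ commutes with $A$. Since $\mu_A(A)=0$, this gives
\[
\mu_A(X)\,I=(XI-A)\,B(X)
\]
for some matrix $B(X)\in\F[X]^{n\times n}$. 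Taking determinants, which are multiplicative over the commutative ring $\F[X]$, yields
\[
\mu_A(X)^n=\chi_A(X)\det B(X).
\]
This is exactly $\chi_A\mid\mu_A^n$.

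The main obstacle is this second direction. The care needed is in justifying the factorisation and the determinant step over the polynomial ring rather than over a field, which is routine. As an alternative, I could pass to a splitting field $E$ of $p$. A root $\lambda\in E$ of $p$ is then a root of $\chi_A$, so there is a nonzero $v\in E^n$ with $vA=\lambda v$. This gives $0=v\mu_A(A)=\mu_A(\lambda)v$, so $\mu_A(\lambda)=0$. Since $p$ is irreducible, it is (up to a scalar) the minimal polynomial of $\lambda$ over $\F$, and therefore $p\mid\mu_A$. I prefer the determinant argument, since it avoids field extensions, which the thesis has not introduced at this point.
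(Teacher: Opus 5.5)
Your proof is correct, and so is your decision to read $\chi_A$ as $\det(XI-A)$. With the literal definition $\det(I-XA)$, both the stated Cayley--Hamilton theorem and this proposition fail: already $A=0$ gives $\chi_A=1$ but $\mu_A=X$.

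The paper gives no proof of its own here; it only cites Hartley--Hawkes. In a module-theoretic treatment of that kind, the statement is presumably read off from the decomposition of $\F^n$ into cyclic $\F[X]$-modules with invariant factors $f_1\mid f_2\mid\dots\mid f_r$. One then has $\chi_A=f_1\cdots f_r$ (a block-diagonal sum of companion matrices) and $\mu_A=f_r$. Hence every irreducible factor of $\chi_A$ divides some $f_i$, and therefore divides $\mu_A$.

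Your route is different and more elementary. One inclusion is Cayley--Hamilton. For the other you only need the factorisation $\mu_A(X)I=(XI-A)B(X)$ in $\F[X]^{n\times n}$, which is valid because $XI$ is central, together with multiplicativity of $\det$ over the commutative ring $\F[X]$. This gives $\chi_A\mid\mu_A^n$. It keeps the proposition self-contained and independent of the decomposition theory of the later chapters. That theory itself invokes this proposition, e.g.\ in the remark that a primary matrix has $\chi_A=p^c$, which is needed to apply the partition lemma to primary restrictions.

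The structural route gives finer information in exchange: the exact factorisation of $\chi_A$ into invariant factors or elementary divisors, and the sharper divisibility $\chi_A\mid\mu_A^{r}$, with $r\le n$ the number of invariant factors.

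Your splitting-field alternative, via a row eigenvector $vA=\lambda v$ over $E$, is also correct.
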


\begin{citeproof}
	See \cite[p. 184]{Har70}.
\end{citeproof}

\begin{kor}\label{simpol}
	Let $A, B \in \F^{n\times n}$ be two similar matrices. Then 
	$$\mu_A = \mu_B \text{ and } \chi_A = \chi_B,$$
	i.e. $A$ and $B$ share the same minimal and characteristic polynomials.
\end{kor}

\begin{citeproof}
	See \cite[p. 192]{Hof71}.
\end{citeproof}

When checking two matrices for similarity, comparing their minimal and characteristic polynomials is an useful heuristic. However, it may not always be the case that matrices which share both their minimal and characteristic polynomials are already similar. Consider the following counter example:

\begin{bsp}
	Let $A,B \in \F_3^{4 \times 4}$ be two matrices with
	\[
	A:=
	\begin{pmatrix}
		. & . & 1 & .\\
		. & . & . & .\\
		. & . & . & .\\
		. & . & . & .	
	\end{pmatrix}\text{, } 
	B:=
	\begin{pmatrix}
		. & 1 & . & .\\
		. & . & . & .\\
		. & . & . & 1\\
		. & . & . & .		
	\end{pmatrix}.
	\]
	Then $\mu_A(X) = \mu_B(X) = X^2$ and $\chi_A(X) = \chi_B(X) = X^4$.
	But $\mathrm{Rank}(A) = 1 \neq 2 = \mathrm{Rank}(B)$, so $A$ and $B$ cannot be similar.
\end{bsp}

We will see later on however, that there exists a subclass of matrices for which similarity is equivalent to having the same minimal polynomial. 

Just as conjugate matrices represent the same linear transformation with respect to different bases, we can also reinterpret vectors in new coordinates by applying the change-of-basis matrix. If both the matrix and the vector are expressed relative to the same new basis, then the associated minimal polynomial remains unchanged.

\begin{kor}
	Let $A \in \F^{n\times n}$, $\mathcal{B} \in \GL{n}{\F}$ and $v \in V$. Then
	$$\mu_{A,v} = \mu_{A^\mathcal{B}, v\mathcal{B}}.$$
\end{kor}

\begin{proof}
	On the one hand we have
	\begin{align*}
		v\mathcal{B}\mu_{A,v}(A^{\mathcal{B}}) &= v\mathcal{B}\mathcal{B}^{-1}\mu_{A,v}(A)\mathcal{B}\\
		&= v\mu_{A,v}(A)\mathcal{B}\\
		&= 0
	\end{align*}
	so $\mu_{A,v} \mid \mu_{A^{\mathcal{B}},v\mathcal{B}}$. But since we can follow the same argument to show that $v\mu_{A^{\mathcal{B}},v\mathcal{B}}(A) = 0$ we also have that $\mu_{A^{\mathcal{B}},v\mathcal{B}} \mid \mu_{A,v}$. Hence we conclude $\mu_{A^{\mathcal{B}},v\mathcal{B}} = \mu_{A,v}$.
	
\end{proof}

Every polynomial $p \in \F[X]$ appears as the minimal polynomial and as the characteristic polynomial of a matrix. 

\begin{defi}
	Let $p \in \F[X]$ be a polynomial with $p(X) = X^n + c_{n-1}X^{n-1} ... + c_1X + c_0$,  and let $I_{n-1}$ denote the identity matrix of dimension $(n-1) \times (n-1)$. We call   
	$$M_p = \begin{pNiceArray}{c|ccc}
		0 & \Block{3-3}<\Large>{\mathbf{I_{n-1}}}\\
		\vdots \\
		0 \\
		\hline
		-c_0 & -c_1 & \dots & -c_{n-1}
	\end{pNiceArray} \in \F^{n\times n}$$
	the \textbf{companion matrix of $p$}.
\end{defi}

\begin{prop}
	For any polynomial $p \in \F[X]$ we have $\chi_{M_p} = \mu_{M_p} = p$.
\end{prop}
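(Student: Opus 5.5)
The plan is to work with row vectors, as the paper does ($v \mapsto vA$), and to use the cyclic vector $e_1$. First I compute $\mu_{M_p,e_1}$ and show it equals $p$. Then I lift this to $\mu_{M_p}$, and finally obtain $\chi_{M_p}$ from Cayley--Hamilton together with a degree count.

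Write $p = X^n + c_{n-1}X^{n-1} + \dots + c_0$ and $M := M_p$. Reading off the rows of $M$ gives
\[
e_iM = e_{i+1} \quad (1 \le i \le n-1), \qquad e_nM = -c_0e_1 - c_1e_2 - \dots - c_{n-1}e_n .
\]
By induction, $e_1M^{k} = e_{k+1}$ for $0 \le k \le n-1$. Hence $e_1, e_1M, \dots, e_1M^{n-1}$ are linearly independent, so no nonzero polynomial of degree less than $n$ annihilates $e_1$. On the other hand,
\[
e_1M^n = e_nM = -\sum_{i=0}^{n-1} c_i e_{i+1} = -\sum_{i=0}^{n-1} c_i\, e_1M^{i},
\]
that is, $e_1\,p(M) = 0$. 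Since $p$ is monic of degree $n$, the defining properties of the minimal polynomial of a vector give $\mu_{M,e_1} = p$.

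Next I show $p(M) = 0$. Polynomials in $M$ commute with $M$, so for every basis vector
\[
e_i\,p(M) = e_1M^{i-1}p(M) = e_1\,p(M)\,M^{i-1} = 0 .
\]
Thus $p(M)$ vanishes on a basis, so $p(M) = 0$ and $\mu_M \mid p$. By the proposition that $\mu_{A,v} \mid \mu_A$, we also have $p = \mu_{M,e_1} \mid \mu_M$. Both polynomials are monic, so $\mu_M = p$.

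For the characteristic polynomial, Cayley--Hamilton gives $p = \mu_M \mid \chi_M$. I will read the paper's characteristic polynomial as the monic degree-$n$ polynomial $\det(XI - M)$. The literal formula $\det(I - XM)$ gives the reversed polynomial, and for it the statement fails: for $M = (0)$ with $p = X$ one gets $\chi = 1$. With this reading, $\chi_M$ and $p$ are both monic of degree $n$ and $p \mid \chi_M$, so $\chi_M = p$.

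As a cross-check that avoids Cayley--Hamilton, one can expand $\det(XI - M)$ along the last row (or by induction on $n$) to obtain $X^n + c_{n-1}X^{n-1} + \dots + c_0$ directly. The main obstacle is this convention issue rather than any computation. The rest is routine, provided one keeps the row-vector convention, so that the rows (not the columns) of $M$ encode the shift $e_i \mapsto e_{i+1}$.
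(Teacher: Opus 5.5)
Your proof is correct and follows essentially the same argument as the source the paper cites for this proposition (Hoffman--Kunze, p.~230). There, $e_1$ is a cyclic vector annihilated by $p$, which forces $\mu_{M_p}=p$; then Cayley--Hamilton with a degree count, or a direct determinant expansion, gives $\chi_{M_p}=p$. You are also right to read $\chi$ as the monic $\det(XI-M)$: the paper's literal formula $\det(I-XA)$ is a typo, and under it both this proposition and the paper's own Cayley--Hamilton statement fail (e.g.\ for $A=0$).
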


\begin{citeproof}
	See \cite[p. 230]{Hof71}.
\end{citeproof}

\chapter{Invariant Subspaces}
The construction of the Jordan normal form relies on decomposing the vector space $V$ into invariant subspaces. This chapter is dedicated to discussing the properties that make such subspaces so useful. We then turn to a particularly important class, the cyclic subspaces.

\section{A-invariance}

\begin{defi}
	Let $M$ be an abelian group and $R$ a ring, together with map $M \times R \rightarrow M, (m,r) \mapsto mr$. We call $M$ a \textbf{module over $R$} (or \textbf{$R$-module}) if the following conditions are satisfied for all $r,r_1,r_2 \in R$ and $m,m_1,m_2 \in M$: 
	\begin{enumerate}
		\item $(m_1+m_2)r = m_1r+ m_2r$,
		\item $m(r_1+r_2) = mr_1 + mr_2$,
		\item $m(r_1r_2) = (mr_1)r_2$,
		\item $m\cdot1 = m$.
	\end{enumerate} 
	If $M,N$ are two $R$-modules, we call a function $\varphi: M \rightarrow N$ a \textbf{module homomorphism} if 
	\begin{enumerate}
		\item $\varphi(m_1 + m_2) = \varphi(m_1) + \varphi(m_2)$,
		\item $\varphi(mr) = \varphi(m)r$,
	\end{enumerate}
	for all $r \in R$ and $m,m_1,m_2 \in M$. 
	Furthermore, if $\varphi$ is surjective, injective or bijective, we call it an \textbf{epimorphism}, \textbf{monomorphism} or \textbf{isomorphism}, respectively. 
\end{defi}

\begin{bem}\label{famodule}
	Let $A \in \F^{n \times n}$. We may consider $V$ as a canonical $\F[X]$ module via $A$ using the following map:
	$$V \times \F[X] \rightarrow V,\; (v,p) \mapsto v\cdot_A p := v\varepsilon_A(p) = vp(A).$$
	We may switch between looking at the repeated operation of $A$ on $V$ as an $\F$-vector space and $V$ as a $\F[X]$-module interchangeably.
\end{bem}

\begin{citeproof}
	See \cite[pp. 172-173]{Har70}.
\end{citeproof}

Note that since the polynomial ring $\F[X]$ is commutative, i.e. $p\cdot q = q\cdot p$ for all polynomials $p,q \in \F[X]$, we have 
$$vp(A)q(A) = vq(A)p(A)$$ 
for all vectors $v \in V$ and all matrices $A \in \F^{n\times n}$.

Considering $V$ as an $\F[X]$ module allows us to compare the actions of different matrices in $\F^{n \times n}$ on $V$ in a more precise way. We have seen before that similar matrices represent the same linear map under different bases. The following proposition formulates this in the language of modules. 

\begin{prop}
	 Let $A,B \in \F^{n\times n}$ be two similar matrices. Then $V$ as an $\F[X]$-module via $A$ is isomorphic to $V$ as an $\F[X]$-module via $B$. 
\end{prop}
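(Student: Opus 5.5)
Since $A$ and $B$ are similar, there is a matrix $C \in \GL{n}{\F}$ with $A^C = C^{-1}AC = B$. The plan is to show that right multiplication by $C$,
$$\varphi: V \rightarrow V,\; v \mapsto vC,$$
is an isomorphism of $\F[X]$-modules from $V$ with the action $\cdot_A$ of \cref{famodule} to $V$ with the action $\cdot_B$. In other words, we check that $\varphi$ is a bijective module homomorphism in the sense of the definition above.

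The first step is the identity $p(B) = C^{-1}p(A)C$ for every $p \in \F[X]$. Since $(C^{-1}AC)^k = C^{-1}A^kC$ for all $k \in \mathbb{N}$ (the inner factors $CC^{-1}$ cancel, by a short induction on $k$), the identity follows by linearity over the coefficients of $p$. This is the same computation already used in the proof of the corollary on $\mu_{A,v} = \mu_{A^\mathcal{B}, v\mathcal{B}}$.

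Next we check the homomorphism properties. Additivity $\varphi(v+w) = vC + wC = \varphi(v)+\varphi(w)$ holds by distributivity of matrix multiplication. Compatibility with the scalars $p \in \F[X]$ follows from the identity above:
$$\varphi(v\cdot_A p) = vp(A)C = vC\,C^{-1}p(A)C = \varphi(v)\,p(B) = \varphi(v)\cdot_B p.$$
Finally, $\varphi$ is bijective because $w \mapsto wC^{-1}$ is a two-sided inverse. Hence $\varphi$ is an isomorphism of $\F[X]$-modules.

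No step is a real obstacle. The only point requiring care is the order of the conjugation, which must match the row-vector convention $v \mapsto vp(A)$. With $B = C^{-1}AC$, the correct map is $v \mapsto vC$ and not $v \mapsto vC^{-1}$, so that the factor $CC^{-1}$ appears in the displayed computation.
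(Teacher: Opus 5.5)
Your proof is correct and follows the paper's argument exactly: the same map $v \mapsto vC$, the same computation $vp(A)C = vC\,C^{-1}p(A)C = \varphi(v)\,p(B)$, and bijectivity from the invertibility of $C$. The only difference is that you explicitly justify the identity $p(C^{-1}AC) = C^{-1}p(A)C$, which the paper uses without comment.
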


\begin{proof}
	Since $A$ and $B$ are similar, there exists a matrix $C \in \GL{n}{\F}$ such that $C^{-1}AC = B$. Consider the following mapping: 
	\[\varphi: V \rightarrow V, v \mapsto vC.\]
	For $v,w \in V$ have $\varphi(v + w) = (v+w)C = vC + wC = \varphi(v) + \varphi(w)$ and for $p \in \F[X]$ we have 
	\[
	\begin{aligned}
		\varphi(v\cdot_A p) &=  vp(A)C \\
							&= vCC^{-1}p(A)C \\
							&= vCp(C^{-1}AC) \\
							&= vCp(B) \\
							&= \varphi(v)\cdot_B p.
	\end{aligned}
	\]
	So $\varphi$ is a module homomorphism. 
	
	Furthermore, since $C \in \GL{n}{\F}$, $\varphi$ is also bijective and thus a module isomorphism. 

\end{proof}

\begin{defi}
	Let $W \leq V$ be a subspace of $V$ and $A \in \F^{n\times n}$. We call $W$ an \textbf{$A$-invariant} subspace of $V$, and write $W \leq_A V$, if $W\cdotp A = \{wA\mid w \in W\} \subseteq W$.
\end{defi}

Only when a subspace is $A$-invariant it is well defined to consider the action of $A$ on that subspace. 

\begin{bem}
	Let $A \in \F^{n\times n}$. Furthermore, let $W \leq_A V$ be an  $A$-invariant subspace of $V$.
	\begin{enumerate}
	\item The mapping $W \times \F[X] \rightarrow W, (w,p) \mapsto wp(A)$ is well defined, so we may consider $W$ as an $\F[X]$-submodule of $V$.
	\item Similarly, for $v + W \in V/W$, we have 
	$$(v+W)A = vA + WA = vA + W \in V/W.$$
	So we may also consider $V/W$ as an $\F[X]$-module via $A$. 
	\end{enumerate}
\end{bem}

We translate the notion of $A$ acting on $A$-invariant subspaces of $V$ back into the language of matrices in the following proposition.

\begin{prop}\label{restriction}
	Let $A \in \F^{n\times n}$. Furthermore let $W \leq_A V$ be an $A$-invariant $s$-dimensional subspace of $V$ for $s \leq n$ and $\mathcal{B} = \{b_1,\dots,b_s\}$ a basis of $W$. If we extend $\mathcal{B}$ to a basis $\mathcal{B}' = \{b_1, \dots, b_s, b_{s+1}, \dots,b_n\}$ of $V$, then $A^{\mathcal{B}'^{-1}}$ has the block-triangular form
	$$A^{\mathcal{B}'^{-1}} = 
	\begin{pNiceArray}{c|c}
		A|_{W,\mathcal{B}} & 0^{s\times(n-s)}\\
		\hline
		C & A_{V/W,\mathcal{B}}
	\end{pNiceArray} \in \F^{n\times n}
	$$
	with $A|_{W,\mathcal{B}} \in \F^{s\times s}, A_{V/W, \mathcal{B}} \in \F^{(n-s) \times (n-s)}, C \in \F^{(n-s) \times s}$.
\end{prop}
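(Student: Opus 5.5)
We treat $\mathcal{B}'$ as the matrix whose $i$-th row is $b_i$. This matches the paper's convention that vectors are rows and matrices act from the right. Then $A^{\mathcal{B}'^{-1}} = \mathcal{B}' A \mathcal{B}'^{-1}$, and we compute this product row by row. The $i$-th row of $\mathcal{B}' A$ is $b_i A$. For any vector $u$, the product $u\mathcal{B}'^{-1}$ is the unique coefficient vector $(\lambda_1,\dots,\lambda_n)$ with $u = \sum_j \lambda_j b_j$. This holds because $(\lambda_1,\dots,\lambda_n)\mathcal{B}' = \sum_j \lambda_j b_j$ and $\mathcal{B}'$ is invertible, since $\mathcal{B}'$ is a basis. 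Hence the $i$-th row of $A^{\mathcal{B}'^{-1}}$ is the coordinate vector of $b_iA$ with respect to $\mathcal{B}'$. In other words, $A^{\mathcal{B}'^{-1}}$ is the matrix representation $M_{\mathcal{B}'}$ of the map $v\mapsto vA$, in the sense of the earlier definition.

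Next we use $A$-invariance. For $i\le s$ we have $b_i\in W$, so $b_iA\in W = \langle b_1,\dots,b_s\rangle$. By uniqueness of coordinates, the coefficients of $b_{s+1},\dots,b_n$ in $b_iA$ vanish. This gives the zero block $0^{s\times(n-s)}$ in the upper right. The upper-left $s\times s$ block has as its $i$-th row the coordinates of $b_iA$ in the basis $\mathcal{B}$ of $W$. This is exactly the matrix of the restriction $A|_W$ with respect to $\mathcal{B}$, and we denote it by $A|_{W,\mathcal{B}}$. The lower rows, $i>s$, are split into a left part $C\in\F^{(n-s)\times s}$ and a right part, which we call $A_{V/W,\mathcal{B}}$.

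To justify this name, we check that the lower-right block is the matrix of the induced action on $V/W$. The action is well defined by the preceding remark. We use the basis $(b_{s+1}+W,\dots,b_n+W)$ of $V/W$. For $i>s$, write $b_iA=\sum_{j\le s}c_{ij}b_j+\sum_{j>s}a_{ij}b_j$. Then $(b_i+W)A = b_iA+W = \sum_{j>s}a_{ij}(b_j+W)$. So the lower-right block consists exactly of the coordinates of the induced map on $V/W$.

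The only real obstacle is bookkeeping. We must correctly identify $u\mathcal{B}'^{-1}$ as taking coordinates, and keep the row-vector convention straight so that the zero block lands in the upper right rather than the lower left. Once the rows are identified as coordinate vectors of $b_iA$, the rest follows directly from invariance.
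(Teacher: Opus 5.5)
Your proof is correct: identifying the $i$-th row of $\mathcal{B}'A\mathcal{B}'^{-1}$ as the coordinate vector of $b_iA$, and then using $b_iA\in W$ for $i\le s$ to get the zero block, is the standard argument the paper defers to \cite[p.~200]{Hof71}, written here in the row-vector convention. Your extra check that the lower-right block is the matrix of the induced map on $V/W$ covers what the paper only records afterwards in the subsequent remark.
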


\begin{citeproof}
	See \cite[p. 200]{Hof71}.
\end{citeproof}

\begin{bem}\label{restrictionop}
	\begin{enumerate}
		\item Given a map $\alpha \in \mathrm{End}(V)$, and an $\alpha$-invariant subpace $W$ of $V$, it is clear that the restriction
		$$\alpha|_W: W \rightarrow W, w \mapsto \alpha(w)$$ 
		is well defined. 
		By considering $\mathcal{B},\mathcal{B}_1,\mathcal{B}_2 := \{b_{s+1},\dots,b_n\}$ as described in \cref{restriction} and $A := M_{\mathcal{B}}(\alpha)$, we have that
		\[A|_{W,\mathcal{B}} = M_{\mathcal{B}_1}(\alpha|_W),\]
		i.e. $A|_{W,\mathcal{B}}$ is precisely the matrix representation of the induced endomorphism $\alpha|_W$ on $W$ in terms of $\mathcal{B}_1$.
		\item Similarly $\alpha$ induces a well-defined endomorphism on $V/W$
		\begin{align*}
			\alpha_{V/W}: V/W \rightarrow V/W, v+W \mapsto \alpha(v+W) &= \alpha(v) + \alpha(W)\\
			&= \alpha(v) + W.
		\end{align*}
		If we consider the basis $\mathcal{B}_{V/W}$ of $V/W$ induced by $\mathcal{B}$ as described in the canonical way, have that 
		$$A_{V/W,\mathcal{B}} = M_{\mathcal{B}_{V/W}}(\alpha_{V/W}).$$ 
		So $A_{V/W,\mathcal{B}}$ is the matrix representation of the induced endomorphism $\alpha_{V/W}$ on $V/W$ in terms of $\mathcal{B}_{V/W}$.
	\end{enumerate}
\end{bem}

\begin{citeproof}
	\begin{enumerate}
		\item See \cite[p. 200]{Hof71}.
		\item See \cite[p. 88]{Hal78}. 
	\end{enumerate}
\end{citeproof}

This motivates the following definition:

\begin{defi}
	Consider $A|_{W,\mathcal{B}}, A|_{V/W, \mathcal{B}}$ as defined in \cref{restriction}. We call $A|_{W,\mathcal{B}}$ the \textbf{matrix restriction} of $A$ to $W$ and $A_{V/W, \mathcal{B}}$ the \textbf{induced matrix} on $V/W$ in terms of $\mathcal{B}_W$. 
	
	For two different bases $\mathcal{B}, \mathcal{B}'$ of $W$ the matrix restrictions $A|_{W,\mathcal{B}}, A|_{W,\mathcal{B}'}$ and the induced matrices $A_{V/W, \mathcal{B}}, A_{V/W, \mathcal{B'}}$ are similar, so we may neglect specifying the basis and just write $A|_W$ and $A|_{V/W}$. 
\end{defi}

Since similar matrices represent the same linear map in different bases, it is natural to expect that their corresponding $\F[X]$-modules are essentially the same. The next proposition makes this precise.

\begin{kor}\label{moduleiso}
	Let $A\in \GL{n}{\F}$ and let $W \leq_A V$ be an $s$-dimensional subspace of $V$. Then $W$ as an $\F[X]$-module via $A$ is isomorphic to $\F^s$ as an $\F[X]$-module via $A|_{W}$.
\end{kor}

\begin{proof}
	This follows directly from \cref{restrictionop} by choosing a basis $\mathcal{B}$ of $V$ and considering $\alpha \in \mathrm{GL}(V)$ such that $A = M_{\mathcal{B}}(\alpha)$.
\end{proof}

An important property of invariant subspaces is that restrictions preserve divisibility relations between minimal polynomials.

\begin{prop}\label{restrictionpol}
	Let $A \in \F^{n\times n}$ and $W \leq V$ be an $A$-invariant subspace of $V$. Then 
	\[\mu_{A|_W} | \mu_A,\]
	i.e. the minimal polynomial of the matrix restriction divides the minimal polynomial of the matrix. 
\end{prop}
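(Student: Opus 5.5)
The plan is to use the block-triangular form from \cref{restriction} together with the characterisation of $\mu_{A|_W}$ as the monic generator of $\kernel(\varepsilon_{A|_W})$ from \cref{minpol}. It then suffices to show that $\mu_A(A|_W) = 0$: this places $\mu_A$ in $\kernel(\varepsilon_{A|_W})$, and so $\mu_{A|_W} \mid \mu_A$.

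First choose a basis $\mathcal{B} = (b_1,\dots,b_s)$ of $W$ and extend it to a basis $\mathcal{B}'$ of $V$. By \cref{restriction}, $A' := A^{\mathcal{B}'^{-1}}$ is lower block-triangular, with upper-left block $A|_{W,\mathcal{B}}$ and zero upper-right block. By \cref{simpol}, $\mu_{A'} = \mu_A$. So it remains to show $\mu_{A'}(A|_{W,\mathcal{B}}) = 0$.

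The key step is that block lower-triangular matrices of this shape are closed under sums, scalar multiples and products, and that the upper-left block of a product is the product of the upper-left blocks. I would prove this by a short induction on $k$: the upper-left block of $A'^k$ is $(A|_{W,\mathcal{B}})^k$, and its upper-right block stays $0$. By linearity, for any $p \in \F[X]$ the matrix $p(A')$ has upper-left block $p(A|_{W,\mathcal{B}})$. Taking $p = \mu_A$ gives $0 = \mu_A(A')$, whose upper-left block is $\mu_A(A|_{W,\mathcal{B}})$, so this block vanishes and the divisibility follows.

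An alternative that avoids matrices is the module view of \cref{moduleiso}. For every $w \in W$ we have $w\mu_A(A) = 0$, so $\mu_A$ annihilates $W$ as an $\F[X]$-module, and this transfers to $\F^s$ via $A|_W$. The only mild obstacle is making the block-multiplication bookkeeping precise, including the degenerate cases $s=0$ and $s=n$. Independence of the choice of basis is already covered by the remark that different bases give similar restrictions, which have the same minimal polynomial by \cref{simpol}.
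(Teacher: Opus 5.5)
Your proposal is correct and follows the paper's proof. Both pass to the block lower-triangular conjugate from \cref{restriction}, use that conjugation preserves $\mu_A$, and read off $\mu_A(A|_W)=0$ from the upper-left block of $\mu_A(A^{\mathcal{B}'^{-1}})=0$; your induction showing that the upper-left block of $p(A')$ is $p(A|_{W,\mathcal{B}})$ just makes explicit what the paper compresses into \enquote{by the definition of matrix multiplication}.
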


\begin{proof}
	Let $\mathcal{B}$ be a basis of $V$ as described in \cref{restriction}. Then $\mu_A = \mu_{A^{\mathcal{B}^{-1}}}$, so $\mu_A(A^{{\mathcal{B}}^{-1}}) = 0$. By the definition of matrix multiplication we then have $\mu_A(A|_{W}) = 0$, so we have $\mu_{A|_W}|\mu_A$. 
\end{proof}

\begin{theorem}\label{directsummat}
	Let $A \in \F^{n\times n}$ and $V = V_1 \oplus \dots \oplus V_k$ such that each $V_i$ is $A$-invariant. 
	If $\mathcal{B}_{i}$ is a basis of $V_i$ for $i \in \{1,\dots,k\}$, $\mathcal{B} := (\mathcal{B}_1,\dots,\mathcal{B}_k)$ is a basis of $V$. We can identify $\mathcal{B}$ as an element of $\GL{n}{\F}$ by writing the basis vectors as the rows of a matrix. Then $A^{\mathcal{B}^{-1}}$ is in the following block-diagonalform: 
	$$A^{\mathcal{B}^{-1}} = \begin{pNiceArray}{cccc}
		A|_{V_1} & 0 &\dots & 0\\
		0 & A|_{V_2} & & 0\\
		\vdots & & \ddots & \vdots\\
		0 & \dots & 0 & A|_{V_k}
	\end{pNiceArray}.$$ 
\end{theorem}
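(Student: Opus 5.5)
We verify the block structure directly by computing $\mathcal{B}A\mathcal{B}^{-1}$ row by row. By the paper's convention $A^{\mathcal{B}^{-1}} = \mathcal{B}A\mathcal{B}^{-1}$. Here $\mathcal{B}\in\GL{n}{\F}$ is the matrix whose rows are the vectors of $\mathcal{B}_1,\dots,\mathcal{B}_k$ in order. It is invertible because $V=V_1\oplus\dots\oplus V_k$ forces the concatenation $\mathcal{B}$ to be a basis of $V$: the vectors span $V$, and the directness of the sum makes them linearly independent.

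Write $\mathcal{B}_i=(b^{(i)}_1,\dots,b^{(i)}_{s_i})$ with $s_i=\dim V_i$. The rows of $\mathcal{B}A$ are the vectors $b^{(i)}_jA$. Multiplying a row vector $w$ on the right by $\mathcal{B}^{-1}$ returns its coordinate vector $v_{\mathcal{B}}(w)$ with respect to $\mathcal{B}$, since $v_{\mathcal{B}}(w)\mathcal{B}=w$.

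So the row of $A^{\mathcal{B}^{-1}}$ indexed by $b^{(i)}_j$ is the coordinate vector of $b^{(i)}_jA$. By $A$-invariance, $b^{(i)}_jA\in V_i$. Hence it is a linear combination of $\mathcal{B}_i$ alone, and its coordinates in all positions belonging to $\mathcal{B}_l$ with $l\neq i$ vanish. This gives the zero off-diagonal blocks.

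The $i$-th diagonal block has, in row $j$, the coordinates of $b^{(i)}_jA$ with respect to $\mathcal{B}_i$. This is exactly the matrix representation of the restriction of $x\mapsto xA$ to $V_i$ in the basis $\mathcal{B}_i$, i.e.\ $A|_{V_i,\mathcal{B}_i}$. One can see this from \cref{restrictionop}, or from \cref{restriction} applied with $W=V_i$ and the basis extended by the remaining $\mathcal{B}_l$; the remark that changing the basis only changes the block up to similarity justifies writing $A|_{V_i}$.

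Alternatively, one could argue by induction on $k$ using \cref{restriction}: take $W=V_1$ extended by the other bases, so the upper-right block vanishes. By the symmetric argument with $W=V_2\oplus\dots\oplus V_k$ placed last, the lower-left block $C$ also vanishes. Then apply induction to $V_2\oplus\dots\oplus V_k$. The direct computation above is cleaner, so I would use it.

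The only delicate point is bookkeeping with the paper's row-vector conventions, namely checking that $A^{\mathcal{B}^{-1}}=\mathcal{B}A\mathcal{B}^{-1}$ and not $\mathcal{B}^{-1}A\mathcal{B}$ is the matrix of $A$ in the basis $\mathcal{B}$. This is exactly the identity $v_{\mathcal{B}}(wA)=v_{\mathcal{B}}(w)\,\mathcal{B}A\mathcal{B}^{-1}$, which follows from $v_{\mathcal{B}}(w)=w\mathcal{B}^{-1}$. Everything else is routine.
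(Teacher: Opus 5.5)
Your proof is correct. It handles the paper's conventions properly: $A^{\mathcal{B}^{-1}}=\mathcal{B}A\mathcal{B}^{-1}$, and each row $b^{(i)}_jA\mathcal{B}^{-1}$ is the coordinate vector of $b^{(i)}_jA\in V_i$, so it vanishes outside the $\mathcal{B}_i$ positions. The paper gives no argument of its own and only cites Hartley--Hawkes (pp.~169--170); your coordinate computation is the standard argument for this fact, so it is essentially the same approach, written out in full.
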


\begin{citeproof}
	See \cite[pp. 169-170]{Har70}.
\end{citeproof}

\begin{defi}
	Let $A \in \F^{n \times r}$ and $B \in \F^{s \times t}$ for $n,r,s,t \in \mathbb{N}$. We call the matrix
	$$A \oplus B :=
	\begin{pNiceArray}{c|c}
		A & 0^{n \times t}\\
		\hline
		0^{s \times r} & B
	\end{pNiceArray} \in \F^{n+s\times r+t},$$
	the \textbf{direct sum} of $A$ and $B$.
\end{defi}

This notation is motivated by \cref{directsummat}. Given the same setting as above, we see that $A^{\mathcal{B}^{-1}} = \bigoplus_{i=1}^k A|_{V_i}$. 

So given a matrix $A \in \F^{n\times n}$, any decomposition of $V$ into $A$-invariant direct sums also yields a basis $\mathcal{B}$ such that $A^{\mathcal{B}^{-1}}$ is the direct sum of smaller matrices. 
As such, we may refer to a decomposition of $V$ into $A$-invariant subspaces as simply a decomposition of $A$. 

\begin{prop}
	Let $A,B \in \F^{n \times n}$. Then $\mu_{A\oplus B} = \lcm(\mu_A,\mu_B)$.
\end{prop}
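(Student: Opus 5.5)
The plan is to verify that $\lcm(\mu_A,\mu_B)$ satisfies the three properties that characterise the minimal polynomial (the theorem following \cref{minpol}). It is monic and it annihilates $A\oplus B$; what must be shown is that it divides every polynomial in $\kernel(\varepsilon_{A\oplus B})$. Throughout, $A \in \F^{n\times n}$ and $B\in\F^{m\times m}$ may have different sizes. The statement writes $A,B\in\F^{n\times n}$, but nothing below uses equal sizes.

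The key observation is that polynomial evaluation respects the block structure: for every $p\in\F[X]$,
\[
p(A\oplus B) = p(A)\oplus p(B).
\]
I would prove this by first checking $(A\oplus B)^k = A^k\oplus B^k$ by induction on $k$, using block multiplication with zero off-diagonal blocks. I would then check that $\oplus$ is compatible with addition and scalar multiplication, and expand $p$ termwise. The only point of care is $k=0$, where $I_{n+m}=I_n\oplus I_m$.

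With this identity, $p(A\oplus B)=0$ holds if and only if $p(A)=0$ and $p(B)=0$. By the definition of the minimal polynomial as the monic generator of $\kernel(\varepsilon_\cdot)$, this is equivalent to $\mu_A\mid p$ and $\mu_B\mid p$. That in turn is equivalent to $\lcm(\mu_A,\mu_B)\mid p$, by the defining property of the least common multiple in the principal ideal domain $\F[X]$. Hence
\[
\kernel(\varepsilon_{A\oplus B}) = \kernel(\varepsilon_A)\cap\kernel(\varepsilon_B) = (\mu_A)\cap(\mu_B) = (\lcm(\mu_A,\mu_B)).
\]
Since $\lcm(\mu_A,\mu_B)$ is taken monic, uniqueness in \cref{minpol} gives $\mu_{A\oplus B}=\lcm(\mu_A,\mu_B)$.

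There is no real obstacle here. The only step needing an actual argument is the block identity $p(A\oplus B)=p(A)\oplus p(B)$, and it is routine. A minor caveat concerns \cref{minpol}, which was stated for nonzero matrices. If $A$ or $B$ is a zero matrix, then $\mu_0 = X$, and the same kernel argument goes through unchanged because $\kernel(\varepsilon_0)=(X)$ is still a proper ideal generated by a monic polynomial.
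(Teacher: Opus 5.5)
Your proof is correct and takes essentially the paper's route: both rest on the block identity $p(A\oplus B)=p(A)\oplus p(B)$ and conclude that the polynomials annihilating $A\oplus B$ are exactly the common multiples of $\mu_A$ and $\mu_B$. Your ideal-theoretic phrasing $\kernel(\varepsilon_{A\oplus B})=(\mu_A)\cap(\mu_B)=(\lcm(\mu_A,\mu_B))$ gets both divisibilities at once, which is slightly cleaner than the paper's separate ``conversely'' step with its appeal to minimality.
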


\begin{proof}
	For any polynomial $p \in \F[X]$ we have $p(A \oplus B) = p(A) \oplus p(B)$.
	
	Now let $\mu := \mu_{A \oplus B}$. It follows that: $0 = \mu(A \oplus B) = \mu(A) \oplus \mu(B)$ so $\mu(A)$ and $\mu(B)$ must both be $0$ and $\mu_A\mid \mu$, $\mu_B\mid \mu$. 
	
	Conversely, if $\mu_A \mid \mu$ and $\mu_B \mid \mu$, then $\mu(A) = \mu(B) = 0$ so $\mu(A \oplus B) = 0$. Since $\mu$ is minimal, we have that $\mu_{A\oplus B} = \lcm(\mu_A,\mu_B)$.
\end{proof}

\section{Cyclic subspaces}
One of the most powerful concepts we will use over the course of this paper is the span of vectors under matrices. It may seem to be a very simple one at first but it will prove to be very useful, especially in the practical computation of invariant subspaces.

\begin{defi}
	Let $v \in V$. Given a matrix $A \in \F^{n\times n}$, we call the set 
	$$\matspan{v}_A := v\cdot_A\mathbb{F}[X] = \{vp(A) \mid p\in \mathbb{F}[X]\}$$
	\textbf{the span of $v$ under $A$}. 
\end{defi}

We prove a few useful properties of the $A$-span of vectors. 

\begin{bem}\label{randomfacts}
	Let $v \in V$, $A \in \F^{n\times n}$ and $d:=\deg(\mu_{A,v})$. Then
	\begin{enumerate}
		\item $\matspan{v}_A$ is $A$-invariant. 
		\item $(v, vA, \dots, vA^{d-1})$ is a basis of $\matspan{v}_A$.
		\item $\dim(\matspan{v}_A) = d$. 
		\item $\mu_{A|_{\matspan{v}_A}} = \mu_{A,v}$
	\end{enumerate}
\end{bem}

\begin{proof}
	Let $w = vp(A) \in \matspan{v}_A$ for some polynomial $p(X) = \sum_{i=0}^ka_iX^i \in \F[X]$, $k \in \mathbb{N}$.
	\begin{enumerate}
		\item For $q(X) := X \in \F[X]$ we have:
		\[wA = vp(A)A = v\underbrace{(pq)}_{\in \F[X]}(A) \in \matspan{v}_A.\]
		\item If $k\geq d$ there exist polynomials $s,r \in \F[X]$ with $\deg(r) < d$ such that $p = \mu_{A,v}s + r$. So 
		\[w = vp(A) = v(\mu_{A,v}s+r)(A) = \underbrace{v\mu_{A,v}(A)}_{=0}s(A) + vr(A) = vr(A) \]
		So without loss of generality, let $k < d$. Then
		\[w = vp(A) = v\sum_{i=0}^{k}a_iA^i = \sum_{i=0}^{k}a_ivA^i\in \matspan{v,vA,\dots,vA^{d-1}}.\] 
		Conversely, for $\sum_{i=0}^{d-1}b_ivA^i \in \matspan{v,vA,\dots,vA^{d-1}}$ we can set the polynomial $q(X) := \sum_{i=0}^{d-1}b_iX^i$ and thus
		\[\sum_{i=0}^{d-1}b_ivA^i = v\sum_{i=0}^{d-1}b_iA^i= vq(A) \in \matspan{v}_A.\]
		
		Now Suppose that $\{v, vA, \dots, vA^{d-1}\}$ is linearly dependent, i.e. there exist $b_0,\dots,b_{d-1} \in \F$ such that $\sum_{i=0}^{d-1} b_ivA^i = 0$. But by setting $q(X) := \sum_{i=0}^{d-1}b_iX^i$ we then get $vq(A) = 0$. Since $\deg(q) = d-1 < \deg(\mu_{A,v})$, this is a contradiction to $\mu_{A,v}$ being the minimal polynomial of $v$. So $(v,vA,\dots,vA^{d-1})$ is a basis of $\matspan{v}_A$.
		\item Follows directly from the second statement. 
		\item See \cite[p.228]{Hof71}.
	\end{enumerate}
\end{proof}

\begin{defi}
	Let $A \in \F^{n \times n}$.
	\begin{enumerate}
	\item We call an $s$-dimensional subspace $W \leq V$ \textbf{A-cyclic}, if there exists a vector $w \in W$ such that $\matspan{w}_A = W$. We call $w$ a \textbf{cyclic vector for $W$}.
	\item We say that $A$ is \textbf{cyclic} if $V$ is $A$-cyclic. We call the vector $v$ such that $\matspan{v}_A = V$ a \textbf{cyclic vector for $A$}.
	\end{enumerate}
\end{defi}

Note the subtle but important difference between the definitions. The first definition is a statement about a subspace of $V$ in relation to $A$, while the second definition is a statement about the matrix $A$ itself. We may also consider cyclicity from the viewpoint of modules.

\begin{prop}
	Let $A \in \F^{n\times n}$ and $W \leq V$. Then $W$ is $A$-cyclic if and only if $W$ as an $\F[X]$-module via $A$ is generated by a single element $w \in W$. In that case, we may call $W$ a \textbf{cyclic $\F[X]$-module via $A$}.
\end{prop}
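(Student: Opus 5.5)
The statement is essentially a translation between the matrix definition of $A$-cyclicity and the module-theoretic notion of a cyclic module. The plan is to unwind both definitions and observe that they describe the same set. The key identity is the definition of the $A$-span,
\[\matspan{w}_A = w\cdot_A \F[X] = \{wp(A) \mid p \in \F[X]\},\]
together with the $\F[X]$-module structure on $V$ via $A$ from \cref{famodule}.

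Throughout, $W$ should be $A$-invariant, so that it is an $\F[X]$-submodule of $V$ via $A$ and the module statement makes sense. This is implicit in the statement, and it holds automatically in the forward direction by \cref{randomfacts}(1). For any $w \in W$, the submodule of $W$ generated by $w$ is, by definition, the smallest submodule containing $w$. I will check that this is exactly $w\cdot_A\F[X]$:
\begin{enumerate}
	\item The set $w\cdot_A\F[X]$ is closed under addition, since $wp(A)+wq(A) = w(p+q)(A)$.
	\item It is closed under the module action, since $(wp(A))q(A) = w(pq)(A)$. Here we use that $\varepsilon_A$ is a ring homomorphism.
	\item It contains $w = w\cdot 1$.
	\item Any submodule containing $w$ must contain every $wp(A)$, by the closure axioms of a submodule.
\end{enumerate}
Hence the submodule generated by $w$ equals $\matspan{w}_A$.

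With this identification both directions are immediate. If $W$ is $A$-cyclic with cyclic vector $w$, then $W = \matspan{w}_A$ is the submodule generated by $w$, so $W$ is generated by a single element. Conversely, if $W$ is generated as an $\F[X]$-module via $A$ by some $w \in W$, then $W = w\cdot_A\F[X] = \matspan{w}_A$, so $w$ is a cyclic vector and $W$ is $A$-cyclic.

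There is no real obstacle here. The only point needing care is verifying that the generated submodule is exactly $w\cdot_A\F[X]$, which rests on the multiplicativity of the evaluation map $\varepsilon_A$.
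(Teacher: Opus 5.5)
Your proposal is correct and takes essentially the paper's approach: both directions come from unwinding the definitions, since $\matspan{w}_A = \{wp(A)\mid p\in\F[X]\}$ is exactly the submodule generated by $w$. The paper also passes through the basis $(w,wA,\dots,wA^{d-1})$ from \cref{randomfacts}, which isn't needed. You instead explicitly check that the generated submodule equals $w\cdot_A\F[X]$ and note the implicit $A$-invariance of $W$, both of which the paper takes for granted.
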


\begin{proof}
	Let $s := \dim(W)$. If $W$ is $A$-cyclic, there exists a $w \in W$ such that $\mathcal{W} = (w,wA,\dots,wA^{d-1})$ is a basis of $W$. Since we know from \cref{randomfacts} that $\mathcal{W}$ is also a basis of $\matspan{w}_A$, we have that $W = \matspan{w}_A = \{wp(A)|p \in \F[X]\}$, so $W$ as an $\F[X]$-module via $A$ is generated by $w$. 
	
	Conversely, if $W$ as an $\F[X]$-module via $A$ is generated by a single element, there exists a $w \in W$ such that for all $w' \in W$ there exists a $p \in \F[X]$ such that $w' = wp(A)$. So $\matspan{w}_A = W$, hence $(w,wA,\dots,wA^{s-1})$ is a basis of $W$.
\end{proof}

We see that a subspace $W$ being $A$-cyclic corresponds to the restriction $A|_W$ being a cyclic matrix. 

\begin{kor}\label{cyclicrestriction}
	Let $W \leq_A V$ be an $s$-dimensional, $A$-invariant subspace of $V$. Then the following are equivalent:
	\begin{enumerate}
		\item $A|_W$ is cyclic. 
		\item $W$ is $A$-cyclic.
	\end{enumerate}
\end{kor}

\begin{proof}
	Let $A|_W$ be cyclic, so $\F^s$ is a cyclic $\F[X]$-module via $A|_W$. From \cref{moduleiso} we know that $\F^s$ as an $\F[X]$-module via  $A|_W$ is isomorphic to $W$ as an $\F[X]$-module via $A$. So this is equivalent to $W$ also being a cyclic $\F[X]$-module via $A$ and thus $A$-cyclic.  
\end{proof}

What this tells us is that a decomposition of $V$ into $A$-cyclic subspaces corresponds to a decomposition of $A$ into cyclic matrices and vice versa.

Furthermore, $A$-invariant subspaces of $A$-cyclic spaces are also $A$-cyclic. 

\begin{lem}\label{cyclicsub}
	Let $A \in \F^{n \times n}$ and $W \leq V$ be $A$-cyclic. Furthermore let $U \leq_A W$. Then $U$ is $A$-cyclic.	
\end{lem}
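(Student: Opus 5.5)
Since $W$ is $A$-cyclic, pick a cyclic vector $w$ with $W = \matspan{w}_A = \{wp(A) \mid p \in \F[X]\}$. The plan is to transport the problem to the ring $\F[X]$ and use that it is a principal ideal domain, exactly as in \cref{minpol}.

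Consider the map $\pi: \F[X] \to W$, $p \mapsto wp(A)$. By the module structure of \cref{famodule} this is a surjective $\F[X]$-module homomorphism. Let $U \leq_A W$. Then $U$ is an $\F[X]$-submodule of $W$: it is closed under addition and under multiplication by $A$, hence by every $p(A)$. Define the preimage
$$I := \pi^{-1}(U) = \{p \in \F[X] \mid wp(A) \in U\}.$$
Then $I$ is closed under addition, and for $p \in I$ and $q \in \F[X]$ we have $w(pq)(A) = wp(A)q(A) \in U$, using $A$-invariance of $U$ iterated to polynomials. So $I$ is an ideal of $\F[X]$, and it contains $\kernel(\pi) = (\mu_{A,w})$. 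Since $\F[X]$ is a principal ideal domain, $I = (g)$ for some $g \in \F[X]$.

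Set $u := wg(A) \in U$. The claim is that $\matspan{u}_A = U$.
\begin{itemize}
\item Since $u \in U$ and $U$ is $A$-invariant, $\matspan{u}_A \subseteq U$.
\item Conversely, take $x \in U$. By surjectivity $x = wp(A)$ for some $p$, so $p \in I$ and hence $p = gq$ for some $q \in \F[X]$. Then $x = wg(A)q(A) = uq(A) \in \matspan{u}_A$.
\end{itemize}
Thus $U$ is $A$-cyclic with cyclic vector $u$. If $U = 0$ this gives $u = 0$, which is consistent with the definition.

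The only delicate step is checking that $I$ is an ideal, which rests on $U$ being stable under all $q(A)$ and not just under $A$. This follows by induction on $\deg q$, since $U$ is a subspace closed under $A$. The rest is the standard fact that submodules of cyclic modules over a PID are cyclic. Optionally, $g$ can be taken monic with $g \mid \mu_{A,w}$, which identifies $\mu_{A,u} = \mu_{A,w}/g$, but this is not needed.
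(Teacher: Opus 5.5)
Your proof is correct and is essentially the paper's argument. The paper writes a basis of $U$ as $wp_1(A),\dots,wp_t(A)$ and takes $f=\gcd(p_1,\dots,p_t)$, so that $U=\langle wf(A)\rangle_A$; this $f$ generates the ideal $(p_1,\dots,p_t)$, whereas your $g$ generates the full preimage $\pi^{-1}(U)$, which additionally contains $\mu_{A,w}$. Your packaging is slightly cleaner, since $g\in\pi^{-1}(U)$ gives $wg(A)\in U$ directly, while the paper's version tacitly needs B\'ezout to see that $wf(A)\in U$.
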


\begin{proof}
	Let $t:=\dim(U)$ and consider a basis $(u_1,\dots,u_t)$ of $U$.
	Since $W$ is $A$-cyclic there exists a $w \in W$ such that $\matspan{w}_A = W$. So $(u_1,\dots,u_t) = (wp_1(A),\dots,wp_t(A))$ for some polynomials $p_1,\dots,p_t \in \F[X]$. 
	
	Because $U$ is $A$-invariant we have that $wp_i(A)q(A) \in U$ for all $i\in \{1\dots t\}$ and all $q \in \F[X]$. So $U = \matspan{wp_1(A),\dots,wp_t(A)} = \{wp_1(A)q_1(A),\dots,wp_t(A)q_t(A)\mid q_1,\dots,q_t \in \F[X]\}$.
	
	Now consider $f := \mathrm{gcd}(p_1,\dots,p_t)$. Then $U = \{wp_1(A)q_1(A),\dots,wp_t(A)q_t(A)\mid q_1,\dots,q_t \in \F[X]\} = \matspan{wf(A)q(A)\mid q\in \F[X]} = \matspan{wf(A)}_A$ so $wf(A)$ is a cyclic vector for $U$. 
\end{proof}

One very nice property of cyclic matrices is that they are similar to their companion matrices. In fact, we even know the exact conjugating matrix. This means that, given two cyclic matrices of which we know the cyclic vectors, it is very easy to check whether they are similar, and if so, also compute the conjugating matrix.

\begin{lem}\label{companionmat}
	Let $A \in \F^{n\times n}$ be cyclic with cyclic vector $v$. Furthermore let $\mathcal{B} = (v,vA,\dots, vA^{n-1})$. Then 
	$$A^{\mathcal{B}^{-1}} = M_{\chi_A} = M_{\mu_A}.$$
	In particular, $A$ is similar to $M_{\chi_A}$.
\end{lem}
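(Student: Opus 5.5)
We compute the matrix of the linear map $w \mapsto wA$ directly in the basis $\mathcal{B}$. Since $v$ is a cyclic vector for $A$, \cref{randomfacts} gives that $\mathcal{B} = (v, vA, \dots, vA^{n-1})$ is a basis of $V$ and that $d = \deg(\mu_{A,v}) = n$. Moreover $\mu_{A|_{\matspan{v}_A}} = \mu_{A,v}$, and $\matspan{v}_A = V$, so $\mu_{A,v} = \mu_A$ and hence $\deg(\mu_A) = n$. By Cayley--Hamilton $\mu_A \mid \chi_A$, and both are monic of degree $n$ (we read $\chi_A$ as the monic degree-$n$ characteristic polynomial, which is the convention under which the paper's companion-matrix proposition is stated). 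Therefore $\mu_A = \chi_A$, which already gives the second equality $M_{\chi_A} = M_{\mu_A}$.

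Next we identify $\mathcal{B}$ with the matrix whose rows are $v, vA, \dots, vA^{n-1}$. The conjugate $A^{\mathcal{B}^{-1}} = \mathcal{B} A \mathcal{B}^{-1}$ is the matrix whose $i$-th row holds the coordinates of $(vA^{i-1})A$ with respect to $\mathcal{B}$. This holds because $\mathcal{B}A$ has rows $vA^{i}$, and multiplying on the right by $\mathcal{B}^{-1}$ converts a row vector into its coordinates in the basis given by the rows of $\mathcal{B}$. We then read off the rows.

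For $i \le n-1$ we have $(vA^{i-1})A = vA^{i}$, which is the $(i+1)$-st basis vector. Its coordinate vector is the unit vector $e_{i+1}$, and these rows give exactly the block $(0 \mid I_{n-1})$ at the top of $M_p$. For the last row, write $\mu_A(X) = X^n + c_{n-1}X^{n-1} + \dots + c_0$. From $v\mu_A(A) = 0$ we get
\[vA^n = -c_0 v - c_1 vA - \dots - c_{n-1}vA^{n-1},\]
so the last row is $(-c_0, \dots, -c_{n-1})$. This is precisely $M_{\mu_A}$.

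The final claim follows immediately, since $A^{\mathcal{B}^{-1}} = M_{\chi_A}$ exhibits $A$ as conjugate to $M_{\chi_A}$ via $\mathcal{B}^{-1} \in \GL{n}{\F}$.

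The main care point is conventions rather than substance. One must check the row-vector convention $A^B = B^{-1}AB$ with $B = \mathcal{B}^{-1}$, so that the coordinate interpretation above is correct. One must also reconcile the definition $\chi_A = \det(I - XA)$ with the monic $\det(XI - A)$. The paper's companion-matrix proposition implicitly uses the monic version, so we adopt it there.
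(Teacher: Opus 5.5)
Your proof is correct. The paper gives no argument of its own here and only cites \cite[p.~177]{Har70}; your direct computation is the standard argument behind that citation: you read off the rows of $\mathcal{B}A\mathcal{B}^{-1}$ as the coordinates of $vA^{i}$ in the basis $(v,vA,\dots,vA^{n-1})$, take the last row from $v\mu_A(A)=0$, and obtain $\mu_A=\chi_A$ from $\deg\mu_A=n$ together with Cayley--Hamilton. You were also right to read $\chi_A$ as the monic $\det(XI-A)$ rather than the paper's literal $\det(I-XA)$, since the latter is not monic in general and would make both Cayley--Hamilton and $\chi_A=\mu_A$ fail.
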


\begin{citeproof}
	See \cite[p. 177]{Har70}.
\end{citeproof}

We now state a few equivalent characterisations for the cyclicity of $A$ which will be more convenient to work with.

\begin{theorem}\label{cyclicequiv}
	Let $A \in \F^{n\times n}$. Then the following are equivalent:
	\begin{enumerate}
		\item $A$ is cyclic.
		\item $A$ is similar to the companion matrix $M_{\chi_A}$.
		\item $\chi_A = \mu_A$.
		\item $\deg(\mu_A) = n$.
	\end{enumerate}
\end{theorem}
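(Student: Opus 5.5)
I will prove the cycle of implications (1)$\Rightarrow$(2)$\Rightarrow$(3)$\Rightarrow$(4)$\Rightarrow$(1), using only results stated earlier in the paper.

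\emph{(1)$\Rightarrow$(2).} If $A$ is cyclic with cyclic vector $v$, then \cref{companionmat} applies with the basis $\mathcal{B} = (v,vA,\dots,vA^{n-1})$. It gives $A^{\mathcal{B}^{-1}} = M_{\chi_A}$, so $A$ is similar to its companion matrix.

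\emph{(2)$\Rightarrow$(3).} By \cref{simpol}, similar matrices have the same minimal and characteristic polynomials, so $\mu_A = \mu_{M_{\chi_A}}$ and $\chi_A = \chi_{M_{\chi_A}}$. The earlier proposition on companion matrices states $\chi_{M_p} = \mu_{M_p} = p$. Applying it with $p = \chi_A$ gives $\mu_A = \chi_A$.

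\emph{(3)$\Rightarrow$(4).} This is immediate, since $\chi_A$ has degree $n$. I would remark that, with the paper's convention $\chi_A = \det(I - XA)$, one should read $\chi_A$ as normalised to the monic degree-$n$ polynomial. This is the same implicit convention that makes the Cayley--Hamilton divisibility and \cref{companionmat} consistent.

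\emph{(4)$\Rightarrow$(1).} This is the main obstacle. Here one must produce a vector $v$ with $\mu_{A,v} = \mu_A$. By \cref{randomfacts} such a $v$ satisfies $\dim\matspan{v}_A = \deg \mu_{A,v} = n$, so $\matspan{v}_A = V$ and $v$ is a cyclic vector. I would proceed in three steps.

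\begin{enumerate}
\item Factor $\mu_A = \prod_{i} p_i^{e_i}$ into distinct monic irreducibles.
\item For each $i$, the polynomial $\mu_A/p_i$ does not annihilate $V$, so some $w_i$ satisfies $w_i(\mu_A/p_i)(A) \neq 0$. Set $v_i := w_i\,(\mu_A/p_i^{e_i})(A)$. Then $\mu_{A,v_i}$ divides $p_i^{e_i}$ but not $p_i^{e_i-1}$, so $\mu_{A,v_i} = p_i^{e_i}$.
\item Put $v := \sum_i v_i$. Suppose $v\,q(A) = 0$. For fixed $j$, multiply by $(\mu_A/p_j^{e_j})(A)$; this kills every $v_i$ with $i \ne j$. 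Since $\mu_A/p_j^{e_j}$ is coprime to $p_j$, it acts invertibly on $\matspan{v_j}_A$, which forces $p_j^{e_j} \mid q$. Hence $\mu_A \mid q$, and so $\mu_{A,v} = \mu_A$.
\end{enumerate}

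The delicate point is the coprimality argument in step 3. I would write it out with a Bézout identity $a\,(\mu_A/p_j^{e_j}) + b\,p_j^{e_j} = 1$, applied to $v_j$.
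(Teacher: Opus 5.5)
Your proposal is correct and follows the same cycle of implications as the paper, including the tacit reading of $\chi_A$ as the monic degree-$n$ polynomial that the paper also relies on in 3.$\Rightarrow$4. The only difference is in 4.$\Rightarrow$1.: the paper invokes the existence of a maximal vector from \cref{maxvec}, which it proves via the primary decomposition theorem, whereas you build such a vector directly from $v_i = w_i(\mu_A/p_i^{e_i})(A)$ with a Bézout argument, so your proof of this theorem does not depend on the primary decomposition.
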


We will prove this theorem later, when we will have built up a bit more of theory. 

We see that for cyclic matrices, sharing a minimal or a characteristic polynomial is sufficient for similarity. 

\begin{kor}
	Let $A, B \in \F^{n \times n}$ be two cyclic matrices. Then the following are equivalent: 
	\begin{enumerate}
		\item $A$ and $B$ are similar. 
		\item $\mu_A = \mu_B$.
		\item $\chi_A = \chi_B$.
	\end{enumerate}
\end{kor}

\begin{proof}
	\enquote{1.$\implies$2.}: See \cref{simpol}. 
	
	\enquote{2.$\implies$3.}: Since $A$ and $B$ are cyclic we have 
	\[\chi_A = \mu_A = \mu_B = \chi_B.\]
	
	\enquote{3.$\implies$1.}: Since $A,B$ are cyclic, there exist matrices $C,D \in \GL{n}{\F}$ such that $C^{-1}AC = M_{\chi_A} = M_{\chi_B} = D^{-1}BD$. It follows that $D^{-1}C^{-1}ACD = (CD)^{-1}ACD = B$, so $A$ and $B$ are similar. 
\end{proof}

%% file: parts/02jnf.tex
\chapter{The elementary cyclic decomposition}

To define the Jordan normal form of a given matrix $A \in \GL{n}{\F}$, we will first decompose $V$ into primary spaces, that is $A$-invariant subspaces such that the restrictions of $A$ on those subspaces each have an irreducible factor with the associated multiplicity of the minimal polynomial of A as their respective minimal polynomial. Then we will see that each of these primary subspaces can be uniquely decomposed into cyclic subspaces. Finally these cyclic subspaces can be uniquely written in a form such that the companion matrix of their minimal polynomial factors are on the diagonal. We call those the Jordan blocks. 

\section{Primary decomposition}

Given a matrix $A$, computing the kernel of a polynomial of $A$ is a good way to find $A$-invariant subspaces of $V$. 

\begin{prop}
	Let $p \in \F[X]$ and $A\in \F^{n\times n}$. Then $\kernel{p(A)}$ is an $A$-invariant subspace of $V$.
\end{prop}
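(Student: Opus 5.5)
The plan is to check the two defining properties directly: that $\kernel{p(A)}$ is a subspace, and that it is closed under right multiplication by $A$. The paper's convention is that matrices act on row vectors from the right. So I will work with
\[\kernel{p(A)} = \{v \in V \mid vp(A) = 0\}.\]

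First, the subspace property. The map $V \to V$, $v \mapsto vp(A)$, is $\F$-linear, since $(v+w)p(A) = vp(A) + wp(A)$ and $(\lambda v)p(A) = \lambda\, vp(A)$. The kernel of a linear map is a subspace of $V$. In particular it contains $0$ and is closed under addition and scalar multiplication.

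Second, $A$-invariance. Take $v \in \kernel{p(A)}$; I must show $vA \in \kernel{p(A)}$. The key point is that $A$ commutes with $p(A)$. This holds because $\varepsilon_A$ is a ring homomorphism and $\F[X]$ is commutative: with $q(X) := X$ we get
\[A\,p(A) = q(A)p(A) = (qp)(A) = (pq)(A) = p(A)\,q(A) = p(A)\,A.\]
This is exactly the commutativity remark made after \cref{famodule}. Hence
\[(vA)\,p(A) = v\,\bigl(A\,p(A)\bigr) = v\,\bigl(p(A)\,A\bigr) = \bigl(vp(A)\bigr)A = 0 \cdot A = 0,\]
so $vA \in \kernel{p(A)}$. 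Therefore $\kernel{p(A)}\cdot A \subseteq \kernel{p(A)}$, which means $\kernel{p(A)} \leq_A V$.

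There is no real obstacle. The only point needing care is justifying that $A$ and $p(A)$ commute, which follows from multiplicativity of the evaluation map $\varepsilon_A$, or directly from $A \cdot A^i = A^i \cdot A$ and linearity.
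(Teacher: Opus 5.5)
Your proof is correct and takes the same route as the paper: for $v$ with $vp(A)=0$, you use that $A$ commutes with $p(A)$ to get $(vA)p(A) = (vp(A))A = 0$. You also spell out the subspace property and justify the commutation $A\,p(A) = p(A)\,A$, both of which the paper uses without comment.
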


\begin{proof}
	Let $v \in \kernel{p(A)}$, i.e. $vp(A) = 0$. Then:
	$$(vA)p(A) = v(A\cdot p(A)) = (vp(A))A = 0\cdot A = 0.$$
	So we have $vA \in \kernel{p(A)}$.
\end{proof}

By choosing the polynomials as coprime factors of the minimal polynomial, we can get a decomposition of $V$ such that the restriction of $A$ to the subspaces has the respective factor as its minimal polynomial.

\begin{lem}\label{simplelemma}
	Let $A \in \F^{n\times n}$ and suppose there exists a factorisation of the minimal polynomial $\mu_A = p\cdot q$ into coprime factors $p, q \in \F[X]$. It then follows that 
	\begin{enumerate}
		\item $ \kernel{p(A)} = \bild{q(A)}$.  
		\item $V = \kernel{p(A)} \oplus \kernel{q(A)}$.
		\item Now suppose $p = f^i$ for some irreducible $f \in \F[X]$ and $i \in \mathbb{N}$. Furthermore let $A_p := A|_{\kernel{p(A)}}$ and $A_q := A|_{\kernel{q(A)}}$. Then $\mu_{A_p} = p$ and $\mu_{A_q} = q$.
	\end{enumerate}
\end{lem}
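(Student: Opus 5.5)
The whole argument rests on Bézout's identity. Since $p$ and $q$ are coprime in the principal ideal domain $\F[X]$, there exist $a,b\in\F[X]$ with $ap+bq=1$. Evaluating at $A$ gives
\[
I = a(A)p(A) + b(A)q(A),
\]
and every $v\in V$ then splits as $v = vb(A)q(A) + va(A)p(A)$. Throughout I will use that polynomials in $A$ commute, as noted after \cref{famodule}.

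For part 1, take $w=vq(A)\in\bild q(A)$. Then $wp(A)=v\mu_A(A)=0$, so $\bild q(A)\subseteq\kernel p(A)$. Conversely, let $w\in\kernel p(A)$. The identity gives $w = wa(A)p(A) + wb(A)q(A) = wb(A)q(A)$, because $wa(A)p(A)=wp(A)a(A)=0$. Hence $w\in\bild q(A)$, and the two spaces are equal.

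For part 2, the decomposition of $v$ above puts $vb(A)q(A)$ in $\bild q(A)=\kernel p(A)$. By the symmetric argument (part 1 with the roles of $p$ and $q$ swapped), $va(A)p(A)$ lies in $\kernel q(A)$. So $V=\kernel p(A)+\kernel q(A)$. If $w$ lies in both kernels, then $w=wa(A)p(A)+wb(A)q(A)=0$, so the sum is direct.

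For part 3, I first show that $\mu_{A_p}\mid p$ and $\mu_{A_q}\mid q$. By \cref{moduleiso}, $p(A_p)$ corresponds to the action of $p(A)$ on $\kernel p(A)$, which is zero; the same holds for $q$. Next I would use the block form from \cref{directsummat}, which makes $A$ similar to $A_p\oplus A_q$. By the proposition on direct sums and \cref{simpol},
\[
\mu_A=\lcm(\mu_{A_p},\mu_{A_q}).
\]
Since $\mu_{A_p}\mid p$ and $\mu_{A_q}\mid q$ with $p,q$ coprime, this lcm equals $\mu_{A_p}\mu_{A_q}$. So $\mu_{A_p}\mu_{A_q}=pq$. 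Comparing degrees, or simply the monic divisibility, forces $\mu_{A_p}=p$ and $\mu_{A_q}=q$.

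The main obstacle is this last step, where $\mu_{A_p}$ must be transported correctly across the basis change. Once that is done, the hypothesis $p=f^i$ is not actually needed; coprimality suffices.
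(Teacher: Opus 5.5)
Your proof is correct. Parts 1 and 2 coincide with the paper's argument: Bézout, then $\bild q(A)\subseteq\kernel p(A)$ from $\mu_A=pq$, then the splitting $v=va(A)p(A)+vb(A)q(A)$.

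In part 3 the routes differ slightly. The paper first proves $\mu_A\mid\mu_{A_p}\mu_{A_q}$ directly by writing $v=v_p+v_q$. It then uses that $p=f^i$ is a prime power coprime to $q$ to extract $p\mid\mu_{A_p}$. Only afterwards does it invoke $\mu_A=\mu_{A_p\oplus A_q}=\lcm(\mu_{A_p},\mu_{A_q})=\mu_{A_p}\mu_{A_q}$ to obtain $\mu_{A_q}=q$.

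You apply this lcm identity once to both factors and finish with a degree count. The paper relies on the same lcm identity anyway, and hence on the same basis change via \cref{directsummat} and \cref{simpol}. So your version loses nothing and is shorter. As you correctly note, it also shows that the hypothesis $p=f^i$ is superfluous. Coprimality suffices, together with $p,q$ being monic, which both arguments tacitly assume.
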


\begin{proof}
	Since $p$ and $q$ are coprime we know from Bézout's lemma that there exist $a,b \in \F[X]$ such that $1 = pa + qb$. 
	\begin{enumerate}
		\item \enquote{$\supseteq$}: Let $v \in V$. Then:
		\begin{align*}
			0 &= v\mu_A(A) \\& 
			   = v(p\cdot q)(A) \\&
			   = v(p(A)\cdot q(A)) \\&
			   = (vq(A))\cdot p(A).
		\end{align*}
		So $vq(A) \in \kernel{p(A)}$ for all $v \in V$ and as such $\bild{q(A)} \subseteq \kernel{p(A)}$.
		
		\enquote{$\subseteq$}: Let $v \in \kernel{p(A)}.$ Then: 
		\begin{align*}
			v &= v(pa+qb)(A) \\
			  &= \underbrace{vp(A)a(A)}_{=0} + v(qb)(A) \\
			  &= v(bq)(A)\\
			  &= (vb(A))q(A).
		\end{align*}
		So $v \in \bild{q(A)}$, for all $v \in V$ and as such $\kernel{p(A)} \subseteq \bild{q(A)}$. 
		
		\item Let $v \in V$. It follows that: 
		\begin{align*}
			v &= vI_n\\&
			   = v(pa + qb)(A) \\&
			   = v(ap + bq)(A) \\&
			   =\underbrace{(va(A))p(A)}_{\in\bild{p(A)}=\kernel{q(A)}}+\underbrace{(vb(A))q(A)}_{\in\bild{q(A)}=\kernel{p(A)}}.
		\end{align*}
	
		So $V = \kernel{p(A)} + \kernel{q(A)}$. 
		
		Now let $v \in \kernel{p(A)} \cap \kernel{q(A)}$. Then as above:
		$$v = \underbrace{vp(A)}_{=0}a(A)+\underbrace{vq(A)}_{=0}b(A) = 0.$$
		So $V = \kernel{p(A)} \oplus \kernel{q(A)}$.
		
		\item Since $p(A)|_{\kernel{p(A)}} = 0$ it follows that $\mu_{A_p} \mid p$. Similarly $\mu_{A_q} \mid q$ and thus $\mu_{A_p}$ $\mu_{A_q}$ are also coprime. 
		
		Conversely, let $v \in V$. Since we showed $V = \kernel{p(A)} \oplus \kernel{q(A)}$, we can write $v = v_p + v_q$ for $v_p \in \kernel{p(A)}$ and $v_q \in \kernel{q(A)}$. Then
		
		\begin{align*}
		v(\mu_{A_p} \cdot \mu_{A_q})(A) &= (v_p + v_q)(\mu_{A_p} \cdot \mu_{A_q})(A)
		\\&= v_p(\mu_{A_p} \cdot \mu_{A_q})(A) + v_q(\mu_{A_p} \cdot \mu_{A_q})(A)
		\\&= \underbrace{v_p\mu_{A_p}(A)}_{= 0}\mu_{A_q}(A) + \underbrace{v_q\mu_{A_q}(A)}_{= 0}\mu_{A_p}(A)
		\\& = 0.
		\end{align*}
		
		Since this is true for all vectors $v \in V$, it follows that $(\mu_{A_p}\cdot\mu_{A_q})(A) = 0$ and as such $\mu_A \mid (\mu_{A_p}\cdot\mu_{A_q})$ and furthermore $p \mid (\mu_{A_p}\cdot\mu_{A_q})$. So since $p$ and $q$ are coprime it follows that $p\nmid \mu_{A_q}$ and thus $p \mid \mu_{A_p}$ since $p$ is a power of an irreducible factor. 
		
		Thus $\mu_{A_p} = p$. Furthermore, we know that 
		\begin{align*}
			p \cdot q &=  \mu_A = \mu_{A_p \oplus A_q}\\
					  &=  \lcm(\mu_{A_p},\mu_{A_q})\\
					  &=  \mu_{A_p} \cdot \mu_{A_q}\\
					  &=  p \cdot \mu_{A_q}.
		\end{align*}
		 So $\mu_{A_q} = q$. 
	\end{enumerate}
\end{proof}

We can easily generalise this to the factorisation of the minimal polynomial into more than two factors. 

\begin{kor}\label{multipledecomp}
	Let $A \in \F^{n\times n}$ and suppose there exists a factorisation of the minimal polynomial $\mu_A = \prod_{i=1}^{\ell}p_i$ with $p_1,\dots,p_{\ell}$ coprime. Then 
	\begin{enumerate}
		\item $V = \bigoplus_{i=1}^{\ell} \kernel p_i(A)$
		\item $\kernel p_i(A) = Im(q_i(A))$ for $q_i := \prod_{j\neq i}^{\ell}p_j^{m(j)}$. 
		\item For $A_i := A|_{\kernel{p_i(A)}}$ we have $\mu_{A_i} = p_i$.
	\end{enumerate}
\end{kor}

\begin{proof}
	This follows from induction over $\ell$ by using \cref{simplelemma} since for any factorisation of a polynomial $p = \prod_{i=1}^{\ell}$ into coprime factors we can write $p = \prod{i=1}^{\ell-1}p_i \cdot p_{\ell}$ with $\prod_{i=1}^{\ell-1}p_i$ and $p_{\ell}$ being coprime.  
\end{proof}

\begin{defi}
	Let $A \in \GL{n}{\F}$. We say that $A$ is \textbf{primary} if 
	$\mu_A = p^m$ for some irreducible $p \in \F[X]$ and $m \in \mathbb{N}$.
\end{defi}

Since we know from \ref{samefactors} that the minimal and characteristic polynomial both share the same distinct irreducible factors, an equivalent definition for a primary matrix would be to say that $\chi_A = p^c$ for some irreducible $p \in \F[X]$ and $c \in \mathbb{N}$.

By factorising $\mu_A$ into distinct irreducible factors, we can use \cref{multipledecomp} to can obtain a matrix $C \in \GL{n}{\F}$ such that $A^C$ is the direct sum of primary matrices.

\begin{theorem}[Primary Decomposition]\label{primarydecomp}
	Let $A \in \F^{n \times n}$ and $\mu_A = \prod_{i=1}^{\ell}p_i^{m(i)}$ be the factorization of the minimal polynomial into monic, irreducible and mutually distinct polynomials $p_i$ and let $$V_i := \kernel(p_i^{m(i)}(A)).$$ It then follows that
	
	\begin{enumerate}
		\item $V_i$ is $A$-invariant for all $i \in \{1, \dots \ell\}$ and
		$$V = \bigoplus_{i=1}^{\ell} V_i.$$
		We call $V_1,\dots,V_{\ell}$ the \textbf{primary subspaces} of $V$ in regards to $A$. 
		
		\item $V_i = \bild(q_i(A))$ for $q_i := \prod_{j \neq i}^{\ell} p_j^{m(j)}$.
		
		\item Let $A_i := A|_{V_i}$. Then $\mu_{A_i} = p_i^{m(i)}$. 
	\end{enumerate} 
\end{theorem}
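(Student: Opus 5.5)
The plan is to apply \cref{multipledecomp} to the coprime factorisation $\mu_A = \prod_{i=1}^{\ell} p_i^{m(i)}$. First I check that the factors $p_i^{m(i)}$ are pairwise coprime. The $p_i$ are monic, irreducible and mutually distinct, so for $i \neq j$ the polynomials $p_i$ and $p_j$ are non-associate irreducibles, hence $\gcd(p_i,p_j)=1$. Since $\F[X]$ is a principal ideal domain, a common irreducible divisor of $p_i^{m(i)}$ and $p_j^{m(j)}$ would have to divide both $p_i$ and $p_j$; therefore $\gcd(p_i^{m(i)},p_j^{m(j)})=1$. This verifies the hypothesis of \cref{multipledecomp}, with the factors taken to be $p_i^{m(i)}$.

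With this, the three parts of the theorem are read off in order. $A$-invariance of each $V_i = \kernel(p_i^{m(i)}(A))$ is the proposition at the start of this section on kernels of polynomials in $A$. The decomposition $V = \bigoplus_i V_i$ is part 1 of \cref{multipledecomp}. The image description $V_i = \bild(q_i(A))$ with $q_i = \prod_{j\neq i} p_j^{m(j)}$ is part 2. The statement $\mu_{A_i} = p_i^{m(i)}$ is part 3.

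The main obstacle is that \cref{multipledecomp} is proved only by a brief induction from \cref{simplelemma}, and part 3 of \cref{simplelemma} requires one factor to be a prime power $f^i$. I would therefore redo that induction explicitly. Write $\mu_A = p_\ell^{m(\ell)} \cdot r$ with $r = \prod_{i<\ell} p_i^{m(i)}$; the two factors are coprime because $p_\ell$ divides no factor of $r$. \Cref{simplelemma} then gives
\begin{itemize}
\item $V = V_\ell \oplus \kernel r(A)$,
\item $V_\ell = \bild r(A)$,
\item $\mu_{A|_{V_\ell}} = p_\ell^{m(\ell)}$ and $\mu_{A|_{\kernel r(A)}} = r$, using $f = p_\ell$.
\end{itemize}

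Next, apply the induction hypothesis to $A' := A|_{\kernel r(A)}$, whose minimal polynomial is $r$. The point to check is that the kernels computed inside $W := \kernel r(A)$ agree with the $V_i$ for $i<\ell$. One inclusion is clear: $\kernel_W p_i^{m(i)}(A') = W \cap V_i$. For the reverse, note $p_i^{m(i)} \mid r$, so $V_i \subseteq W$.

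Images need the same check. By induction, $\kernel_W p_i^{m(i)}(A') = q_i'(A')(W)$ with $q_i' = \prod_{j\ne i, j<\ell} p_j^{m(j)}$. Since $W = \bild p_\ell^{m(\ell)}(A)$ and $q_i = q_i' p_\ell^{m(\ell)}$, we get $W q_i'(A) = V p_\ell^{m(\ell)}(A) q_i'(A) = \bild q_i(A)$. Combining these gives all three claims for every index $i$.
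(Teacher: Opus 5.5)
Your proposal is correct and takes the same approach as the paper, whose proof simply invokes \cref{multipledecomp} after noting that the $p_i^{m(i)}$ are pairwise coprime. Your explicit redo of the induction adds detail the paper leaves implicit, without changing the approach: splitting off $p_\ell^{m(\ell)}$, then checking that the kernels and images computed inside $\kernel r(A)$ coincide with $V_i$ and $\bild q_i(A)$.
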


\begin{proof}
	Since any polynomial can be factorised into irreducible factors, we can write $\mu_{A} = \prod_{i=1}^{\ell} p_i^{m(i)}$ as described in the theorem for any matrix $A \in \F^{n\times n}$ by grouping their multiplicites. Then 1., 2. and 3. follow directly from \cref{multipledecomp} since the $p_i^{m(i)}$ are all coprime because of their irreducibility. 
\end{proof}

It is clear that the decomposition of $V$ into primary subspaces is unique up to reordering of direct summands. 

The naming of the primary subspaces is motivated by the following fact:

By choosing bases $\mathcal{B}_1,\dots,\mathcal{B}_{\ell}$ of $V_1,\dots,V_{\ell}$, we get a basis $\mathcal{B}$ of $V$ such that 
$$A^\mathcal{B} = \begin{pNiceArray}{cccc}
	A|_{V_1} & 0 &\dots & 0\\
	0 & A|_{V_2} & & 0\\
	\vdots & & \ddots & \vdots\\
	0 & \dots & 0 & A|_{V_{\ell}}
\end{pNiceArray}.$$ 
where, according to \cref{primarydecomp}, each $A_i$ is primary.
We see that the blocks depend only on our choice of bases for the $V_i$, i.e. they are unique up to similarity.

Once we have decomposed $V$ into primary subspaces with respect to $A$, we can work with these subspaces individually by considering the restriction of $A$ on them. 

The primary decomposition theorem has a very useful consequence. 

\begin{prop}\label{maxvec}
	
	Let $A \in \F^{n\times n}$. There exists a vector $v \in V$ such that $\mu_{A,v} = \mu_A$. We call $v$ a \textbf{maximal vector}.
\end{prop}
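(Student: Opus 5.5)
We use the primary decomposition (\cref{primarydecomp}) to reduce to the primary case, and then add up the local vectors. Write $\mu_A = \prod_{i=1}^{\ell} p_i^{m(i)}$ and $V = \bigoplus_{i=1}^{\ell} V_i$ with $V_i = \kernel(p_i^{m(i)}(A))$. By part 3 of \cref{primarydecomp}, $\mu_{A_i} = p_i^{m(i)}$, where $A_i = A|_{V_i}$. If $A = 0$ the statement is trivial, so assume otherwise.

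\emph{Step 1 (primary case).} For each $i$ we find $v_i \in V_i$ with $\mu_{A,v_i} = p_i^{m(i)}$. For every $w \in V_i$ we have $w p_i^{m(i)}(A) = 0$, so $\mu_{A,w}$ divides $p_i^{m(i)}$. Since $p_i$ is irreducible, $\mu_{A,w} = p_i^{k}$ for some $k \le m(i)$.

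Suppose no $w \in V_i$ attains $k = m(i)$. Then every $w \in V_i$ satisfies $w\, p_i^{m(i)-1}(A) = 0$. Hence $p_i^{m(i)-1}(A_i) = 0$, which contradicts $\mu_{A_i} = p_i^{m(i)}$. Here we use that $A_i$ acts on $V_i$ exactly as $A$ does, via \cref{moduleiso}. So some $v_i \in V_i$ has $\mu_{A,v_i} = p_i^{m(i)}$.

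\emph{Step 2 (gluing).} Set $v := v_1 + \dots + v_\ell$. Certainly $\mu_{A,v} \mid \mu_A$. For the converse, take $q$ with $v q(A) = 0$. Each $V_i$ is $A$-invariant, so $v_i q(A) \in V_i$. Because the sum $V = \bigoplus V_i$ is direct, $v q(A) = \sum_i v_i q(A) = 0$ forces $v_i q(A) = 0$ for every $i$. Thus $p_i^{m(i)} = \mu_{A,v_i}$ divides $q$ for all $i$. The factors $p_i^{m(i)}$ are pairwise coprime, so their product $\mu_A$ divides $q$. Applying this with $q = \mu_{A,v}$ gives $\mu_A \mid \mu_{A,v}$, and therefore $\mu_{A,v} = \mu_A$.

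The main obstacle is Step 1. It relies on the fact that the divisors of a prime power are totally ordered, so that a single vector realises the full exponent. Everything else is bookkeeping with the direct sum and coprimality.
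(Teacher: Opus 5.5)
Your proof is correct and follows the paper's argument. The paper also handles the primary case by contradiction: if no vector had length $m$, then $p^{m-1}(A)$ would vanish. It then glues $v = v_1 + \dots + v_\ell$ over the primary decomposition and uses that the $p_i^{m(i)}$ are pairwise coprime. The only difference is cosmetic: you get $v_i q(A) = 0$ for each $i$ directly from $A$-invariance and the directness of $V = \bigoplus V_i$, whereas the paper passes to the block-diagonal conjugate $A^{\mathcal{B}^{-1}}$ and reads this off block by block.
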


\begin{proof}
		We first show the special case for $A \in \F^{n\times n}$ primary, i.e. $\mu_A = p^m$ for some irreducible $p \in \F[X]$. 
		Suppose that there exists no vector $v \in V$ with $\mu_{A,v} = p^m$. Since the minimal polynomial of a vector must divide the minimal polynomial of a matrix, there exists an $r < m$ such that $vp^r(A) = 0$ for all $v \in V$. But that implies $p^r(A) = 0$ which is a contradiction to $p^m$ being the minimal polynomial of $A$.
		
		Now consider an arbitrary matrix $A \in \F^{n\times n}$ and let $\mu_A = \prod_{i=1}^{\ell}p_i^{m(i)}$ be the factorization of the minimal polynomial into irreducible, distinct polynomials $p_i$. Then, by \cref{primarydecomp} we have 
		$$V = \oplus_{i=1}^{\ell}V_i, \text{ with } V_i := \kernel(p_i^{m(i)}(A))$$
		along with a matrix $\mathcal{B}\in \GL{n}{\F}$ such that 
		$$A^{\mathcal{B}^{-1}} = \bigoplus_{i=1}^{\ell} A|_{V_i,\mathcal{B}} \text{ and } \mu_{A|_{V_i}} = p_i^{m(i)}.$$
		
		Now let $d_i := \dim(V_i)$ and $A_i := A|_{V_i,\mathcal{B}}$. We have already shown that for all $i \in \{1,\dots, \ell\}$ there exist $\tilde{v}_i \in \F^{d_i}$ such that $\mu_{A_i,\tilde{v}_i} = p_i^{m(i)}$. So there exist $v_i \in V_i$ such that $\mu_{A^{\mathcal{B}^{-1}},v_i} = p_i^{m(i)}$. 
		
		Consider $v := \sum_{i=1}^{\ell} v_i$ and let  $f := \mu_{A^{\mathcal{B}^{-1}},v}$, so $vf(A^{\mathcal{B}^{-1}})=0$. Then for all $i \in \{1,\dots, \ell\}$ we have $vf(A_i) = 0$ and thus $p_i^{m(i)} \mid f$. So, since $p_1,\dots,p_{\ell}$ are all coprime we know that $\prod_{i=1}^{\ell}p_i^{m(i)} \mid f$. Conversely, the minimal polynomial of a vector must divide the minimal polynomial of the matrix, so $f \mid \prod_{i=1}^{\ell}p_i^{m(i)}$. So all in all we have 
		$$\mu_{A,v\mathcal{B}^{-1}}\mu_{A^{\mathcal{B}^{-1}},v} = f = \prod_{i=1}^{\ell}p_i^{m(i)} = \mu_A.$$ 
\end{proof}

Recall \cref{cyclicequiv} about the equivalent characterisations of cyclic matrices. It is possible to prove these without the use of the results of the primary decomposition theorem, however the proof is much more straightforward using the existence of maximal vectors.

\begin{theorem}
	Let $A \in \F^{n\times n}$. Then the following are equivalent:
	\begin{enumerate}
		\item $A$ is cyclic.
		\item $A$ is similar to the companion matrix $M_{\chi_A}$.
		\item $\chi_A = \mu_A$.
		\item $\deg(\mu_A) = n$.
	\end{enumerate}
\end{theorem}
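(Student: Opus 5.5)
I will prove the implications in a cycle, $1 \Rightarrow 2 \Rightarrow 3 \Rightarrow 4 \Rightarrow 1$, and use the existence of maximal vectors (\cref{maxvec}) for the final step.

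\enquote{1.$\implies$2.}: If $A$ is cyclic with cyclic vector $v$, then \cref{companionmat} applies directly. It gives $A^{\mathcal{B}^{-1}} = M_{\chi_A}$ for $\mathcal{B} = (v,vA,\dots,vA^{n-1})$, so $A$ is similar to $M_{\chi_A}$.

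\enquote{2.$\implies$3.}: Suppose $A$ is similar to $M_{\chi_A}$. By \cref{simpol}, $\mu_A = \mu_{M_{\chi_A}}$. By the proposition on companion matrices, $\mu_{M_{\chi_A}} = \chi_A$. Hence $\mu_A = \chi_A$.

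\enquote{3.$\implies$4.}: The characteristic polynomial $\chi_A$ has degree $n$. The paper's convention $\det(I - XA)$ differs from the monic $\det(XI - A)$ by reversal and sign, but it still encodes a degree-$n$ polynomial, and I will treat $\chi_A$ as the monic degree-$n$ polynomial throughout. Then $\mu_A = \chi_A$ gives $\deg(\mu_A) = n$.

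\enquote{4.$\implies$1.}: This is the substantive step, and \cref{maxvec} is what makes it work. By \cref{maxvec} there is a vector $v \in V$ with $\mu_{A,v} = \mu_A$, so $\deg(\mu_{A,v}) = n$. By \cref{randomfacts}, $\dim(\matspan{v}_A) = \deg(\mu_{A,v}) = n$. Since $\matspan{v}_A \le V$ and $\dim V = n$, we get $\matspan{v}_A = V$, so $v$ is a cyclic vector and $A$ is cyclic.

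The main obstacle would be producing a cyclic vector from the degree condition alone. Since \cref{maxvec} is already available, that step reduces to a dimension count. The only remaining care point is the degree of $\chi_A$ under the paper's $\det(I-XA)$ convention, which I will address with the remark above on monic normalisation.
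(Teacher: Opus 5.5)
Your proof is correct and follows the paper's proof essentially step for step. It uses the same cycle $1\Rightarrow2\Rightarrow3\Rightarrow4\Rightarrow1$: \cref{companionmat} for the first step, \cref{simpol} together with $\mu_{M_p}=\chi_{M_p}=p$ for the second, $\deg(\chi_A)=n$ for the third, and a maximal vector from \cref{maxvec} with the dimension count from \cref{randomfacts} for the last. Your remark on normalising $\chi_A$ to the monic $\det(XI-A)$ is a sensible repair, since the paper's literal $\det(I-XA)$ is the reversed polynomial and has degree less than $n$ whenever $A$ is singular.
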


\begin{proof}
	\enquote{1.$\implies$2.}: 
	Follows directly from \cref{companionmat}.
	
	\enquote{2.$\implies$3}:
	Since similar matrices share the same minimal and characteristic polynomial, we have 
	$$\mu_A = \mu_{M_{\chi_A}} = \chi_A$$ by definition of the companion matrix. 
	
	\enquote{3.$\implies$4}:
	This follows directly, since $\deg(\chi_A) = n$. 
	
	\enquote{4.$\implies$1}:
	We have shown in \cref{maxvec} that there exists a maximal vector $v \in V$, i.e. $\mu_{A,v} = \mu_{A}$. So we also have $\deg(\mu_{A,v}) = \deg(\mu_{A}) = n$. Then $\dim(\matspan{v}) = \deg(\mu_{A,v}) = n = \dim(V)$. So $\matspan{v} = V$ which means that $v$ is a cyclic vector and thus $A$ a cyclic matrix. 
\end{proof}

\begin{bem}\label{CyclicEigenspaceDim}
	For primary matrices, i.e. $A \in \F^{n \times n}$ with $\mu_A = p^m$, cyclicity is then evidently equivalent to $m\cdot \deg(p) = n$.  
\end{bem}

When the condition above is met, primary matrices have a very nice property:

\begin{prop}\label{CyclicEigenspaceSubspaces}
	Let $A \in \GL{n}{\F}$ be cyclic with $\chi_A = \mu_A = p^m$ for some irreducible $p \in \F[X]$ and $m \in \mathbb{N}$. Furthermore define 
	$$V_i := \kernel p(A)^i \; \text{ for } i = 0,\dots,m,$$ along with $d := \deg(p)$. Then
	\begin{enumerate}
		\item if $v \in V$ is a cyclic vector, we have $V_i = \kernel p(A)^i = \matspan{vp(A)^{m-i}}_A$ for $i = 0,\dots,m$.
		\item $\dim(V_i) = i \cdot d$.
		\item all of the $A$-invariant subspaces of $V$ are given by $V_i$ for $i = 0,\dots,m$. 
	\end{enumerate} 
\end{prop}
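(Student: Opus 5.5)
We will work throughout with the cyclic vector $v$, so that $V = \matspan{v}_A$ and $\mu_{A,v} = \mu_A = p^m$ by \cref{randomfacts}, and with $w_i := vp(A)^{m-i}$ for $i = 0,\dots,m$. The plan is to first compute the minimal polynomial of each $w_i$ and the dimension of its span, then identify that span with $V_i$, and finally use \cref{cyclicsub} to show that these are the only invariant subspaces.

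\textbf{Step 1 (minimal polynomials and dimensions).} We claim $\mu_{A,w_i} = p^i$. Indeed, for $q \in \F[X]$ we have $w_i q(A) = 0$ if and only if $v(p^{m-i}q)(A) = 0$, i.e.\ if and only if $p^m \mid p^{m-i}q$, i.e.\ if and only if $p^i \mid q$. Hence by \cref{randomfacts}, $\dim\matspan{w_i}_A = \deg(p^i) = i\cdot d$. This gives the second statement once the first is established.

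\textbf{Step 2 (identifying $V_i$).} The inclusion $\matspan{w_i}_A \subseteq V_i$ is immediate: $w_i p(A)^i = v p(A)^m = 0$, and $V_i$ is $A$-invariant, so all of $\matspan{w_i}_A$ lies in $V_i$. For the reverse inclusion, take $u \in V_i$ and write $u = vq(A)$ for some $q \in \F[X]$, using that $v$ is cyclic. The condition $up(A)^i = 0$ means $v(qp^i)(A) = 0$, so $p^m \mid qp^i$, hence $p^{m-i} \mid q$. Writing $q = p^{m-i}s$ gives $u = w_i s(A) \in \matspan{w_i}_A$. This proves the first statement, and together with Step 1 the second. (Alternatively one can compare dimensions, but the direct divisibility argument avoids needing $\dim V_i$ independently.)

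\textbf{Step 3 (all invariant subspaces).} Let $U \leq_A V$. By \cref{cyclicsub}, $U = \matspan{u}_A$ for some $u$. Then $\mu_{A,u}$ divides $\mu_A = p^m$, and since $p$ is irreducible this forces $\mu_{A,u} = p^j$ for some $0 \le j \le m$. Consequently $u p(A)^j = 0$, so $u \in V_j$ and therefore $U \subseteq V_j$. Moreover $\dim U = \deg(p^j) = jd = \dim V_j$, so $U = V_j$.

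The main delicacy is in Step 3: the argument relies on unique factorisation in $\F[X]$ to conclude that every monic divisor of $p^m$ is of the form $p^j$. It also requires care with the degenerate case $u = 0$, where we set $\mu_{A,0} = 1$ and obtain $U = V_0 = \{0\}$, even though the paper's definition of $\mu_{A,v}$ is stated only for $v \neq 0$. Step 2 needs the cyclic vector to express an arbitrary $u$ as $vq(A)$.
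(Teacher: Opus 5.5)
Your proof is correct, but it takes a different route from the paper's in both main steps.

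For the first statement, the paper proves only $\matspan{vp(A)^{m-i}}_A \subseteq \kernel p(A)^i$ directly. It then closes the gap by counting dimensions: the image of $p(A)^i$ is $\matspan{vp(A)^i}_A$, of dimension $(m-i)d$, so rank--nullity gives $\dim \kernel p(A)^i = id = \dim \matspan{vp(A)^{m-i}}_A$. The second statement is read off from that count. You instead get the reverse inclusion from a divisibility computation in the annihilator ideal $(p^m)$ of $v$, which needs only cancellation of $p^i$ in $\F[X]$. So you need neither the image computation nor rank--nullity, and the dimension formula falls out of $\mu_{A,w_i} = p^i$.

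For the third statement, the paper does not use \cref{cyclicsub}. It first shows, via a maximal vector, that the chain $V_0 \le \dots \le V_m$ is strictly increasing. Then it takes $i$ minimal with $W \le V_i$ and an arbitrary $w \in W \setminus V_{i-1}$. Such a $w$ must have $\mu_{A,w} = p^i$, so $\matspan{w}_A \le W \le V_i$ with $\dim \matspan{w}_A = id = \dim V_i$, forcing $W = V_i$.

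Your version is shorter because \cref{cyclicsub} hands you a generator $u$ of $U$ up front. The paper's minimal-index argument is self-contained: it shows that any element of $W$ of maximal length already generates $W$, which amounts to reproving \cref{cyclicsub} in this special case. The strictness of the chain, which the paper proves separately, is immediate in your approach from $\dim V_i = id$, and it is not actually needed for the statement.
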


\begin{proof}
	\begin{enumerate}
		\item Since $\mu_{A,v} = p^m$ and $p(A)^{m-i}\cdot p(A)^i = p(A)^m$, we see that $vp(A)^{m-i} \in \kernel(p(A)^i)$. 
		So $\matspan{vp(A)^{m-i}}_A \leq  \kernel p(A)^i$. 
		
		From \cref{randomfacts} we know that $\matspan{vp(A)^{m-i}}_A$ is cyclic and $A$-invariant. Furthermore, the minimal polynomial of the restriction of $A$ to $\matspan{vp(A)^i}_A$ is given by $p^{m-i}$, so we have $\dim(\matspan{vp(A)^i}_A) = (m-i)\cdot d$ by \cref{CyclicEigenspaceDim}. 
		
		The fundamental theorem on homomorphisms yields: $$\dim(V/\kernel(p(A)^i)) = \dim(\matspan{vp(A)^i}_A) = (m-i)\cdot d.$$
		Because $V$ is cyclic, this in turn means that
		\begin{align}
			\dim(\kernel(p(A)^i)) &= \dim(V) - \dim(V/\kernel(p(A)^i)) \\
								  &= m\cdot d - (m-i)\cdot d  =i\cdot d \label{dimsize}\\
								  &= \dim(\matspan{vp(A)^{m-i}}_A).
		\end{align}	
		
		So we have $\ker(p(A)^i) = \matspan{vp(A)^{m-i}}_A$.
		\item Follows directly from \cref{dimsize}.
		
		\item It is clear that $\{0\} = V_0 \leq \cdots \leq V_m = V$. 
		
		We first show that $V_i \neq V_{i-1}$ for all $i = 1,...,m$.
		Let $u \in V$ be a maximal vector of $A$, i.e. $\mu_{A,u} = p^m$. So $u \in V=V_m$ but $u \notin V_{m-1}$. Now let $u' := up(A)$. Then $\mu_{A,u'} = p^{m-1}$, so $u' \in V_{m-1}$ but $u' \notin V_{m-2}$. We can then repeat this process until we reach $V_m = \{0\}$. This tells us that for each $i = 0, \dots, m$ we can find an element in $V$ with minimal polynomial $p^i$. 
		
		Now let $W \leq V$ be an non-trivial $A$-invariant subspace of $V$. Then there exists $i \in \{0,\dots,m\}$ minimal such that $W \leq V_i$. 
		
		Let $w \in W\backslash V_{i-1} \subseteq V_i$. Such an element exists since $W \not\leq V_{i-1}$. Then $\mu_{A,w} = p^i$, so we have $\dim(\matspan{w}_A) = i \cdot d$. Since $V_i = \matspan{vp(A)^{m-i}}_A$ is cyclic, we also know that $\dim(V_i) = i \cdot d$. 
		
		So we get $W = V_i$. 
	\end{enumerate}
\end{proof}

\begin{bsp}
	The second statement in \cref{CyclicEigenspaceSubspaces} does not hold in general when the condition of $\mu_A = \chi_A$ is not met. For a counterexample, consider the following matrix:
	$$A = \begin{pmatrix}
		2 & 0 \\
		0 & 2 
	\end{pmatrix} \in \GL{2}{\F_3}.$$ 
	$A$ is not cyclic because $\mu_A = X-1 \neq X^2 + X + 1 = \chi_A$. 
	
	Now let $p = X-1$. The $A$-invariant subspaces $V_i$ as described in \cref{CyclicEigenspaceSubspaces} are:
	\begin{itemize}
		\item $V_0 = \kernel(p(A)^0) = \{0\}$
		\item $V_1 = \kernel(p(A)^1) = V$
	\end{itemize}
	However, it is clear that $\matspan{\begin{pmatrix}1 \\ 0\end{pmatrix}}_A$ is an $A$-invariant subspace of $V$ but neither equal to $\{0\}$ nor to $V$ since $\begin{pmatrix}1 \\ 0\end{pmatrix}A = \begin{pmatrix}2 \\ 0\end{pmatrix}$. 
\end{bsp}

\section{Cyclic decomposition}

We have seen that primary matrices have very nice properties when they are cyclic. Naturally, the question arises if and how one can decompose any primary matrix into cyclic matrices. 

\begin{lem}\label{directsumlemma}
	Let $A \in \GL{n}{\F}$ with minimal polynomial $\mu_A = p^m$ for some irreducible $p \in \F[X]$ and $m \in \mathbb{N}$. Now let $v, w \in V$ with $\mu_{A,v} = p^r$ and  $\mu_{A,w} = p^s$, such that $s\leq r$. 
	Then there exists a $w' \in V$ such that:
	$$\matspan{v}_A + \matspan{w}_A = \matspan{v}_A \oplus \matspan{w'}_A.$$
\end{lem}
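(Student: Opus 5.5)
The plan is to work inside the $A$-invariant subspace $U := \matspan{v}_A + \matspan{w}_A$ and pass to the quotient $U/\matspan{v}_A$, which is an $\F[X]$-module via $A$ because $\matspan{v}_A$ is $A$-invariant (\cref{randomfacts}). The image $\bar w$ of $w$ generates this quotient. Since $w\,p(A)^s = 0$, the annihilator of $\bar w$ is generated by $p^t$ for some $t \le s$: the monic generator of the annihilator ideal divides $p^s$, and $p$ is irreducible. In other words, $t$ is minimal with $w\,p(A)^t \in \matspan{v}_A$. If $t=0$ then $w \in \matspan{v}_A$, and $w' := 0$ works. So assume $t \ge 1$.

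By the choice of $t$ we can write $w\,p(A)^t = v\,g(A)$ for some $g \in \F[X]$. If $v\,g(A) = 0$, set $w' := w$. Otherwise write $g = p^k h$ with $p \nmid h$. Applying $p(A)^{s-t}$ gives
\[ 0 = w\,p(A)^s = v\,p(A)^{k+s-t}h(A). \]
Hence $p^r = \mu_{A,v}$ divides $p^{k+s-t}h$. Since $p \nmid h$ and $p$ is irreducible, this forces $r \le k+s-t$, that is,
\[ k \ge r - s + t \ge t, \]
where the last inequality uses the hypothesis $s \le r$. This divisibility estimate $k \ge t$ is the key step and the only place where $s \le r$ enters; I expect it to be the main point requiring care.

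With $k \ge t$ available, set
\[ w' := w - v\,p(A)^{k-t}h(A). \]
Then $w'\,p(A)^t = v\,g(A) - v\,p(A)^k h(A) = 0$. In the case $w' = w$ above we also have $w'\,p(A)^t = 0$. Since $w' - w \in \matspan{v}_A$, we get $\matspan{v}_A + \matspan{w'}_A = U$.

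It remains to show that $\matspan{v}_A \cap \matspan{w'}_A = \{0\}$. Let $x = w' f(A)$ lie in $\matspan{v}_A$. Then $w f(A) \in \matspan{v}_A$ as well, since $w - w' \in \matspan{v}_A$. So $f$ annihilates $\bar w$ in $U/\matspan{v}_A$, and hence $p^t \mid f$. Because $w'\,p(A)^t = 0$, it follows that $x = w' f(A) = 0$. This gives $U = \matspan{v}_A \oplus \matspan{w'}_A$.
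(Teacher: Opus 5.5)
Your proof is correct, but it takes a genuinely different route from the paper's.

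The paper argues iteratively at the bottom layer. By \cref{CyclicEigenspaceSubspaces}, a nonzero intersection $\matspan{v}_A\cap\matspan{w}_A$ must contain the minimal nonzero invariant subspace of each cyclic space, so $\matspan{v}_A p(A)^{r-1}=\matspan{w}_A p(A)^{s-1}$. This gives a $q$ with $v\,q(A)p(A)^{r-1}=w\,p(A)^{s-1}$, and replacing $w$ by $w-v\,q(A)p(A)^{r-s}$ lowers the length of $w$ by at least one without changing the sum. The paper repeats this until the intersection is trivial, which takes at most $s$ rounds.

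You instead compute in one step the order $p^t$ of $w$ modulo $\matspan{v}_A$, lift it via $w'=w-v\,p(A)^{k-t}h(A)$, and check directness through the annihilator ideal of $\bar w$. This is the standard lifting step from the structure theorem for modules over a PID; the hypothesis $s\le r$ is exactly what makes $v$ of maximal order in $U$.

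Your version is shorter and self-contained:
\begin{itemize}
\item it needs neither the description of invariant subspaces of cyclic primary spaces nor a termination argument;
\item it pinpoints the single place where $s\le r$ enters, namely $k\ge t$;
\item it only uses $\mu_{A,v}=p^r$ and $w\,p(A)^s=0$, not $\mu_A=p^m$.
\end{itemize}

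The paper's iterative version has a different advantage. It only ever matches the socle vectors $v\,p(A)^{r-1}$ and $w\,p(A)^{s-1}$ inside $\kernel(p(A))$, which is a linear problem over the field $\F[X]/p\F[X]$ with $\deg q<\deg p$. That is precisely the step reused in the proof of \cref{cyclicdecomposition} and in the cyclic decomposition algorithm. Your construction, by contrast, requires finding $t$ and $g$ via membership tests $w\,p(A)^i\in\matspan{v}_A$ at higher layers.
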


\begin{proof}
	Since $\matspan{w}_A$ is cyclic, we know from \cref{CyclicEigenspaceSubspaces} that its only $A$-invariant subspaces are $W_i := \matspan{w}_Ap^i(A)$ with $\{0\} = W_s \leq W_{s-1} \leq \dots \leq W_1 \leq \matspan{w}_A$. The same is true for $\matspan{v}_A$ with subspaces $V_i := \matspan{v}_Ap^i(A)$, $i = 1,\dots,r$.
	
	If $\matspan{v}_A \cap \matspan{w}_A = \{0\}$ we may choose $w' = w$. Therefore, suppose that $S := \matspan{v}_A \cap \matspan{w}_A \neq \{0\}$. Then $S$ is a non-trivial $A$-invariant subspace of $\matspan{w}_A$, so $S \neq \{0\}$ is equivalent to the smallest non-trivial $A$-invariant subspace of $\matspan{w}_A$, namely $W_{s-1}$, lying in $S$. Following the same argument, $S \neq \{0\}$ is also equivalent to $V_{r-1}\leq S$.  
	
	Since $V_{r-1} \leq S \leq \matspan{w}_A$ we know that either $V_{r-1} \leq W_{s-1}$ or $W_{s-1} \leq V_{r-1}.$ Without restriction, let the former be the case. But since $W_{s-1}$ is already the smallest non-trivial, $A$-invariant subpace of $\matspan{w}_A$ and $V_{r-1} \neq \{0\}$ we have $V_{r-1} = W_{s-1}$, i.e. 
	$$\matspan{v}_Ap(A)^{r-1} = \matspan{w}_Ap(A)^{s-1}.$$
	This is equivalent to the existence of a $q := \alpha_{r-s}X^{r-s} + \dots + \alpha_0X^0 \in \F[X]$ such that 
	\begin{equation}\label{lem_linearkombination}
		vq(A)p(A)^{r-1} = wp(A)^{s-1}.
	\end{equation}
	We can find $\alpha_0, \dots, \alpha_{r-s}$ by solving a system of linear equations. 
	
	Now let $w' := w - vq(A)p(A)^{r-s}$. We know that the minimal polynomial of $w'$ must be $p^t$ for some $t < s$ since 
	\begin{align*}
		w'p(A)^{s-1} &= (w - vq(A)p(A)^{r-s})p(A)^{s-1}
		\\&= wp(A)^{s-1} - vp(A)^{s-1}q(A)p(A)^{r-s} 
		\\&\overset{(\ref{lem_linearkombination})}{=} wp(A)^{s-1} - vp(A)^{r-1}q(A) 
		\\&= wp(A)^{s-1} - wp(A)^{s-1}
		\\&= 0.
	\end{align*}
	Furthermore, we also have $w' = \underbrace{w}_{\in \matspan{w}_A}-\underbrace{vq(A)p(A)^{m-r}}_{\in \matspan{v}_A}$ so $w' \in \matspan{v}_A + \matspan{w}_A$, and thus $\matspan{v}_A + \matspan{w'}_A \subseteq \matspan{v}_A + \matspan{w}_A$.
	Similarly, $w = \underbrace{w'}_{\in \matspan{w'}_A} + \underbrace{vq(A)p(A)^{m-r}}_{\in \matspan{v}_A}$, so it follows that  $\matspan{v}_A + \matspan{w'}_A \supseteq \matspan{v}_A + \matspan{w}_A$.
	
	Together we have $\matspan{v}_A + \matspan{w}_A = \matspan{v}_A + \matspan{w'}_A.$
	
	We now either have $\matspan{v}_A \cap \matspan{w'}_A = \{0\}$ and are done or the intersection is not trivial, in which case the process can be repeated with the new element $w'$. Since $t < s$ we eventually get either a direct sum or $w'$ becomes $0$ after at most $s$ iterations.
\end{proof}

\begin{defi}\label{defilength}
	Let $A \in \GL{n}{\F}$ with $\mu_A = p^m$ for some irreducible $p \in \F[X]$ and $m \in \mathbb{N}$. Furthermore, let $v \in V$ with $\mu_{A,v} = p^r$ for some $r \leq m$. We call $\lambda(v):= r$ the $\textbf{length}$ of $v$ with respect to $A$. 
\end{defi}

This definition is well defined for all $v \in V$ since the minimal polynomial of a vector must divide the minimal polynomial of the matrix. 

Thanks to \cref{randomfacts} it is clear that for $d := \deg(p)$
$$\dim(\matspan{v}_A) = \lambda(v) \cdot d.$$
In particular, we know $v$ to be a cyclic vector if and only if $\lambda(v) = n/d$. 

\begin{prop}
	Let $A \in \GL{n}{\F}$ with the properties described in \cref{defilength}. Furthermore, let $w_1\dots,w_k \in V$ such that $\matspan{w_1}_A + \dots + \matspan{w_k}_A = V$. Then the following are equivalent.
	\begin{enumerate}
		\item $V = \matspan{w_1}_A \oplus \dots \oplus \matspan{w_k}_A$,
		\item $\sum_{i=1}^{k} \lambda(w_i)\cdot d = n$.
	\end{enumerate}
\end{prop}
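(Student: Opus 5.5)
The plan is to reduce the statement to the dimension formula for sums of subspaces. By \cref{randomfacts} together with the remark following \cref{defilength}, each $\matspan{w_i}_A$ is an $A$-invariant subspace with $\dim(\matspan{w_i}_A) = \lambda(w_i)\cdot d$. Condition 2 therefore says exactly that
\[\sum_{i=1}^{k}\dim(\matspan{w_i}_A) = n = \dim(V).\]
The proof then reduces to a purely linear-algebraic fact. If $U_1,\dots,U_k \leq V$ satisfy $U_1+\dots+U_k = V$, then the sum is direct if and only if $\sum_i \dim U_i = \dim V$. The $A$-structure enters only through the dimension computation.

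For \enquote{1.$\implies$2.}: suppose the sum is direct. Choose bases $\mathcal{B}_i$ of the $\matspan{w_i}_A$; for instance, by \cref{randomfacts} we can take $(w_i, w_iA,\dots,w_iA^{\lambda(w_i)d-1})$. Their concatenation is then a basis of $V$, as already used in \cref{directsummat}. Counting the vectors gives $n = \sum_i \lambda(w_i)\cdot d$.

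For \enquote{2.$\implies$1.}: concatenate the same bases $\mathcal{B}_i$ into a family $\mathcal{B}$. Since $\sum_i \matspan{w_i}_A = V$, the family $\mathcal{B}$ spans $V$. By assumption it has exactly $\sum_i \lambda(w_i)d = n$ elements, so it is a spanning family of size $\dim V$ and hence a basis, in particular linearly independent. Now take any relation $u_1+\dots+u_k = 0$ with $u_i \in \matspan{w_i}_A$. Expanding each $u_i$ in $\mathcal{B}_i$ gives a linear relation in $\mathcal{B}$, so all coefficients vanish and every $u_i = 0$. This is precisely directness of the sum.

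I do not expect a real obstacle. The only point needing care is the direction \enquote{2.$\implies$1.}, where one must use the spanning hypothesis $\sum_i\matspan{w_i}_A = V$ to turn the count of $n$ vectors into linear independence. Without that hypothesis the equivalence fails, since the count alone does not force directness. Alternatively, one could argue by induction using $\dim(U+W) = \dim U + \dim W - \dim(U\cap W)$, but the basis-counting argument is cleaner.
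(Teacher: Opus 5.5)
Your proposal is correct and follows essentially the paper's proof: it reduces the statement to the fact that a family of subspaces summing to $V$ is direct if and only if their dimensions add up to $n$, combined with $\dim(\matspan{w_i}_A) = \lambda(w_i)\cdot d$. The only difference is that you prove this linear-algebra fact by basis counting, whereas the paper takes it as known.
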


\begin{proof}
	We know that $\matspan{w_1}_A+ \dots + \matspan{w_k}_A$ forms a direct sum if and only if $\sum_{i=1}^k \dim(\matspan{w_i}_A) = n$. Then the statement follows from the fact that $\dim(\matspan{w_i}_A) = \lambda(w_i) \cdot d$ for all $i \in \{1,\dots,k\}$.
\end{proof}

\begin{prop}\label{kernelrest}
	Let $A \in \GL{n}{\F}$ and $p \in \F[X]$. Furthemore let $W$ be an $A$-invariant subspace of $V$. Then 
	$$K_W := \kernel(p(A))\cap W \simeq \kernel(p(A|_W)).$$
\end{prop}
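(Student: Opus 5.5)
The natural route is to make the isomorphism explicit using a basis of $W$, in the set-up of \cref{restriction}. Let $s := \dim(W)$ and choose a basis $\mathcal{B} = (b_1,\dots,b_s)$ of $W$. Extend it to a basis $\mathcal{B}' = (b_1,\dots,b_n)$ of $V$, and regard $\mathcal{B}'$ as the invertible matrix whose rows are the $b_i$. Then $A' := A^{\mathcal{B}'^{-1}} = \mathcal{B}' A \mathcal{B}'^{-1}$ has the block lower-triangular form of \cref{restriction}, with upper-left block $A|_W := A|_{W,\mathcal{B}}$. We then compare $\kernel(p(A)) \cap W$ with $\kernel(p(A|_W)) \leq \F^s$ through the coordinate map
$$\varphi: \F^s \to W,\quad x = (x_1,\dots,x_s) \mapsto \sum_{i=1}^s x_i b_i = (x,0)\mathcal{B}'.$$
This map is an $\F$-linear bijection onto $W$, since $\mathcal{B}$ is a basis of $W$.

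The key step is to check that $\varphi$ intertwines the two actions: $\varphi(x)\,p(A) = \varphi(x\,p(A|_W))$ for every $x \in \F^s$. First, powers of a block lower-triangular matrix stay block lower-triangular, and their upper-left block is the corresponding power of the upper-left block. By linearity, $p(A')$ is block lower-triangular with upper-left block $p(A|_W)$ and upper-right block $0$. Hence $(x,0)\,p(A') = (x\,p(A|_W),0)$. Next, $p(A') = \mathcal{B}' p(A) \mathcal{B}'^{-1}$. Multiplying the previous identity on the right by $\mathcal{B}'$ gives
$$(x,0)\mathcal{B}'\,p(A) = (x\,p(A|_W),0)\mathcal{B}',$$
that is, $\varphi(x)p(A) = \varphi(x\,p(A|_W))$.

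From this identity the claim follows directly. We have $x \in \kernel(p(A|_W))$ if and only if $\varphi(x\,p(A|_W)) = 0$, because $\varphi$ is injective. By the identity this happens if and only if $\varphi(x)p(A) = 0$, that is, $\varphi(x) \in \kernel(p(A)) \cap W$. Since every element of $W$ has the form $\varphi(x)$, the map $\varphi$ restricts to a linear bijection $\kernel(p(A|_W)) \to K_W$. So the two spaces are isomorphic, and in fact isomorphic as $\F[X]$-modules, which matches the spirit of \cref{moduleiso}.

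The only step needing care is the block-multiplication claim about $p(A')$. It needs a short induction on the exponent: the product of two block lower-triangular matrices is block lower-triangular, and its upper-left block is the product of the two upper-left blocks. We also have to keep the paper's row-vector convention and the conjugation $A^{\mathcal{B}'^{-1}} = \mathcal{B}' A \mathcal{B}'^{-1}$ consistent. Alternatively, one could cite \cref{moduleiso} directly: an $\F[X]$-module isomorphism $\psi$ from $\F^s$ (via $A|_W$) to $W$ (via $A$) maps the submodule annihilated by $p$ onto the submodule annihilated by $p$. These submodules are exactly $\kernel(p(A|_W))$ and $K_W$. This one-line argument could replace the explicit computation.
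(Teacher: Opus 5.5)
Your proof is correct and follows essentially the same route as the paper, which invokes \cref{restrictionop} to identify $A|_W$ with the coordinate matrix of the restricted map on $W$ and then reads off $\kernel(p(A))\cap W = \{w\in W \mid wp(A)=0\}\simeq\kernel(p(A|_W))$. Your coordinate map $\varphi$ and the intertwining identity $\varphi(x)p(A)=\varphi(x\,p(A|_W))$, obtained from the block lower-triangular form of $p(A^{\mathcal{B}'^{-1}})$, make explicit the step the paper leaves implicit; as you note, the shortcut via \cref{moduleiso} is the same argument phrased in module language.
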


\begin{proof}
	Since the intersection of two vector spaces is a vector space, it is clear that $K_W$ is a subspace of both $W$ and $\kernel(p(A))$. 
	Now let $\mathcal{B}$ be a basis of $V$ and $\alpha \in \mathrm{End}(V)$ such that $A = M_{\mathcal{B}}(\alpha)$. Then \cref{restrictionop} tells us that
	\begin{align*}
		\kernel(p(A))\cap W &= \{w \in W\mid wp(A) = 0\}\\
						    &\simeq \kernel{p(A|_W)}.
	\end{align*}
\end{proof}

\begin{lem}\label{kernschranke}
	Let $A \in \GL{n}{\F}$ be a primary matrix with $\mu_A = p^m$ for some irreducible $p \in \F[X]$ and $d := \deg(p)$. Furthermore, let $w_1,\dots,w_k \in V$ such that $V = \matspan{w_1}_A + \dots + \matspan{w_k}_A$. Then 
	$$\dim(\kernel(p(A))) \leq k\cdot d$$
	with equality if and only if $V = \matspan{w_1}_A \oplus \dots \oplus \matspan{w_k}_A$.
\end{lem}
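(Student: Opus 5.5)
We will bound $\dim\kernel(p(A))$ via the surjective linear map given by $p(A)$. Let $K := \kernel(p(A))$ and $W := \bild(p(A)) = Vp(A)$. Since $V = \sum_i \matspan{w_i}_A$ and $p(A)$ commutes with $A$, we have $W = \sum_{i=1}^k \matspan{w_i p(A)}_A$. Writing $r_i := \lambda(w_i)$, the vector $w_ip(A)$ has minimal polynomial $p^{r_i-1}$ (or is $0$ if $r_i = 0$). By \cref{randomfacts} and the remark after \cref{defilength}, $\dim\matspan{w_i}_A = r_i d$ and $\dim\matspan{w_ip(A)}_A = \max(r_i-1,0)\, d$.

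The key inequality comes from restricting $p(A)$ to each cyclic summand. On $\matspan{w_i}_A$ the kernel of $p(A)$ is $\matspan{w_i}_A \cap K$. By \cref{kernelrest} this is isomorphic to $\kernel(p(A|_{\matspan{w_i}_A}))$, which has dimension exactly $d$ when $r_i \geq 1$ by \cref{CyclicEigenspaceSubspaces}, and dimension $0$ when $w_i = 0$. Hence the map $p(A)\colon \matspan{w_i}_A \to \matspan{w_ip(A)}_A$ is surjective with kernel of dimension at most $d$.

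To globalise this, consider the external direct sum $U := \bigoplus_i \matspan{w_i}_A$ and the surjection $\sigma\colon U \to V$, $(u_1,\dots,u_k)\mapsto \sum u_i$. This gives
\[\dim V = \dim U - \dim\kernel\sigma = \sum_i r_i d - \dim\kernel\sigma.\]
Similarly, $\sigma' \colon \bigoplus_i \matspan{w_ip(A)}_A \to W$ is surjective, so $\dim W \geq$ nothing useful directly. A cleaner route is rank–nullity for $p(A)$ on $V$: $\dim K = \dim V - \dim W$. Now $p(A)$ induces a map $\kernel\sigma \to \kernel\sigma'$, and $\dim \kernel\sigma' = \sum_i \max(r_i-1,0)d - \dim W$. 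The cleanest argument is therefore via the composite map
\[\Phi\colon U \xrightarrow{\ \sigma\ } V \xrightarrow{\ p(A)\ } W,\]
which is surjective and equals $\sigma' \circ (\oplus_i p(A))$. We get
\[\dim\kernel\Phi = \dim K + \dim\kernel\sigma,\]
since $\sigma$ is surjective and $\sigma^{-1}(K)$ has that dimension. On the other hand,
\[\dim\kernel\Phi = \dim\kernel(\oplus_i p(A)) + \dim\bigl(\kernel\sigma'\cap\text{image}\bigr) \leq k d + \dim\kernel\sigma'.\]
This comparison of $\dim\kernel\sigma$ against $\dim\kernel\sigma'$ is awkward, so a more robust approach is needed.

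We instead argue by induction on $m$ using $W = Vp(A)$, where $A|_W$ has minimal polynomial $p^{m-1}$, and use $\dim K = \dim(K\cap W) + \dim(K/(K\cap W))$. This still needs a bound on the part of $K$ not inside $W$, so a more direct formula is preferable.

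We take the simplest direct approach. Applying $p(A)^{j}$, the quotient $V/Vp(A)$ is spanned by the images of $w_1,\dots,w_k$ as an $\F[X]/(p)$-module, and $\F[X]/(p)$ is a field of degree $d$. Hence $\dim V/Vp(A) \leq kd$, and $\dim K = \dim V - \dim Vp(A) = \dim V/Vp(A) \leq kd$. This settles the inequality cleanly.

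For the equality case, if the sum is direct, \cref{kernelrest} and \cref{CyclicEigenspaceSubspaces} give $\dim K = \sum_i \dim(K\cap\matspan{w_i}_A) = kd$, provided all $w_i \neq 0$. We assume the $w_i$ are nonzero; otherwise equality fails, and this edge case should be noted. Conversely, suppose $\dim K = kd$. Then the images $\bar w_i$ form an $\F[X]/(p)$-basis of $V/Vp(A)$, so they are independent. We then show directness: a nontrivial relation $\sum_i w_i f_i(A) = 0$ with not all $w_if_i(A)$ zero can, after dividing out the largest common power of $p$ acting, be pushed into a relation modulo $Vp(A)$. This is done Nakayama-style, by comparing dimensions: the images in the graded pieces $Vp(A)^j/Vp(A)^{j+1}$ are spanned by $\bar w_i p^j$ over those $i$ with $r_i > j$. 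Hence $n = \sum_j \dim Vp(A)^j/Vp(A)^{j+1} \leq \sum_i r_i d$. Equality holds exactly when the sum is direct, by the preceding proposition.

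The main obstacle is this converse: we must show that independence modulo $Vp(A)$ forces independence in every layer $Vp(A)^j/Vp(A)^{j+1}$. This comes from the fact that multiplication by $p(A)$ maps each layer onto the next. What needs care is proving that no collapse occurs in the higher layers once the top layer has full rank $kd$.
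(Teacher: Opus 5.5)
The step you flag as the main obstacle cannot be carried out, because the ``only if'' half of the statement is false. Equality $\dim\kernel(p(A)) = kd$ only says that the images $\bar w_1,\dots,\bar w_k$ are $\F[X]/(p)$-independent in $V/Vp(A)$, i.e.\ that the $w_i$ form a minimal generating set. Independence in this top layer does not propagate to the layers $Vp(A)^j/Vp(A)^{j+1}$.

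Here is a counterexample. Let $n=3$ and $A = I+N$ with $e_1N = e_2$ and $e_2N = e_3N = 0$. Then $A$ is invertible, $\mu_A = (X-1)^2$, $p = X-1$ and $d=1$. Take $w_1 = e_1$ and $w_2 = e_1+e_3$. Then $\matspan{w_1}_A = \langle e_1,e_2\rangle$ and $\matspan{w_2}_A = \langle e_1+e_3,e_2\rangle$, and these sum to $V$. Moreover $\dim\kernel(p(A)) = \dim\langle e_2,e_3\rangle = 2 = kd$, yet $e_2$ lies in both spans, so the sum is not direct. The collapse you were worried about happens in layer $1$, where $w_1p(A) = w_2p(A) = e_2$. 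So no argument can close this direction.

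The rest of your proposal is sound, and for the inequality your route is valid where the paper's is not. You use rank--nullity, $\dim\kernel(p(A)) = \dim V/Vp(A)$, together with the fact that $V/Vp(A)$ is spanned over the field $\F[X]/(p)$ by $k$ elements. The paper instead asserts $\kernel(p(A)) = \sum_i\bigl(\kernel(p(A))\cap\matspan{w_i}_A\bigr)$ and bounds each summand by $d$. In general only ``$\supseteq$'' holds: in the example above the right-hand side is just $\langle e_2\rangle$. The paper's proof of the converse rests on that same identity, so it fails at exactly the point where yours stalls.

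Your forward direction (a direct sum with all $w_i\neq 0$ gives equality) is correct. You are also right that $w_i\neq 0$ must be assumed.

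The correct criterion for directness, which is what the proof of \cref{cyclicdecomposition} actually needs, works with the socles rather than with $\kernel(p(A))$. For nonzero $w_i$, the sum is direct if and only if $\dim\sum_i\bigl(\kernel(p(A))\cap\matspan{w_i}_A\bigr) = kd$. Equivalently, the vectors $w_ip(A)^{\lambda(w_i)-1}$ must be $\F[X]/(p)$-independent. To prove the nontrivial direction, take a relation $\sum_i u_i = 0$ with $u_i\in\matspan{w_i}_A$ not all zero, and set $\ell = \max_i\lambda(u_i)$. Multiplying by $p(A)^{\ell-1}$ gives a nontrivial relation among elements of these socles.
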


\begin{proof}
	We know that
	\begin{align*}
		\kernel(p(A)) &= \{v \in V\mid vp(A) = 0\}\\
					  &= \bigcup_{i=1}^{k}\{v_i \in \matspan{w_i}_A\mid v_ip(A) = 0\}\\
					  &= (\kernel(p(A)) \cap \matspan{w_1}_A) + \dots + (\kernel(p(A))\cap \matspan{w_k}_A).
	\end{align*}
	So from \cref{CyclicEigenspaceSubspaces} and \cref{kernelrest} we have 
	\begin{align*}
		\dim(\kernel(p(A))) &\leq \sum_{i=1}^{k}\dim(\kernel(p(A))\cap\matspan{w_i}_A)\\
							&= \sum_{i=1}^k\dim(\kernel(p(A|_{\matspan{w_1}_A})))\\
							&=k\cdot d.
	\end{align*}
	It is clear that if $V = \matspan{w_1}_A \oplus \dots \oplus \matspan{w_k}_A$, we get 
	$$\dim(\kernel(p(A))) = \sum_{i=1}^{k}\dim(\kernel(p(A))\cap\matspan{w_i}_A).$$ 
	Conversely, if the $A$-spans of $w_1,\dots,w_k$ do not form a direct sum of $V$, there must exist $i,j \in \{1,\dots k\}$ and a $w \in V\backslash\{0\}$ such that $w \in \matspan{w_i}_A \cap \matspan{w_j}_A$. Since both $\matspan{w_i}_A$ and $\matspan{w_j}_A$ are $A$-invariant, we know that $w' = wp(A)^{\lambda(w)-1} \in \matspan{w_i}_A \cap \matspan{w_j}_A$. But since $w' \in \kernel(p(A))$ we have that $w' \in \kernel(p(A)) \cap \matspan{w_i}_A \cap \matspan{w_j}_A$. So 
	$$\dim(\kernel(p(A))) < \sum_{i=1}^{k}\dim(\kernel(p(A))\cap\matspan{w_i}_A).$$
\end{proof}

We now wish to generalize \cref{directsumlemma} to cases with more than two summands. 

\begin{theorem}[Cyclic decomposition]\label{cyclicdecomposition}
	Let $A \in \GL{n}{\F}$ with $\mu_A = p^m$ for some irreducible $p \in \F[X]$ and $m \in \mathbb{N}$. Then there exist vectors $v_1, \dots, v_k \in V$ such that 
	$$V = \bigoplus_{i=1}^k \matspan{v_i}_A$$ 
\end{theorem}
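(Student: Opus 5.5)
We argue by induction on $n=\dim V$, combining a maximal vector with a complement construction. The case $n=0$ is trivial. By \cref{maxvec}, pick $v_1\in V$ with $\mu_{A,v_1}=p^m$, so $\lambda(v_1)=m$ and $W_1:=\matspan{v_1}_A$ has dimension $md$. If $W_1=V$ we are done. Otherwise, the key step is to find an $A$-invariant complement $U$ with $V=W_1\oplus U$. Then $A|_U$ is primary, since its minimal polynomial divides $p^m$ by \cref{restrictionpol}. The induction hypothesis, applied via \cref{moduleiso}, gives $U=\bigoplus_{i\ge 2}\matspan{v_i}_A$, and hence $V=\bigoplus_{i=1}^k\matspan{v_i}_A$.

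To build $U$, we proceed greedily and in the style of \cref{directsumlemma}. Start with a spanning family $V=\matspan{v_1}_A+\matspan{u_1}_A+\dots+\matspan{u_t}_A$, for instance with $u_j$ ranging over a basis of $V$. Our aim is to replace each $u_j$ by $u_j'=u_j-v_1 g_j(A)$, for suitable $g_j\in\F[X]$, so that the sum of the new spans meets $W_1$ trivially. The central claim is the following: for any $u\in V$, there exists $g$ with $\matspan{u-v_1g(A)}_A\cap W_1=\{0\}$.

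To prove the claim, let $\lambda(u)=s$. If the intersection $S=\matspan{u}_A\cap W_1$ is nonzero, then by \cref{CyclicEigenspaceSubspaces} applied inside the cyclic space $W_1$, $S$ equals $W_1p(A)^{j}$ for some $j$. The smallest $r$ with $up(A)^r\in W_1$ then satisfies $up(A)^r=v_1h(A)$. Applying $p(A)^{m-r}$ kills the left side, because $s\le m$ and we may shift exponents; this forces $p^r\mid h$ in the degree-bounded sense, i.e. $h=p^rg$. This is exactly where maximality of $\lambda(v_1)=m$ is used, since $v_1h(A)p(A)^{m-r}=0$ forces $p^m\mid hp^{m-r}$. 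Setting $u'=u-v_1g(A)$ gives $u'p(A)^r=0$, while no smaller power of $p(A)$ sends $u'$ into $W_1$. It follows that $\matspan{u'}_A\cap W_1=\{0\}$, since any nonzero intersection would contain a vector $u'f(A)$ with $p\nmid f$ up to units, lying in $W_1$. I expect this to be the main technical point: it is the standard ``$p^r$ divides $h$'' argument, and it fixes the gap in \cref{directsumlemma}, which only handles $s\le r$ pairwise.

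The remaining obstacle is passing from individual vectors to the whole complement: making each span meet $W_1$ trivially does not make their sum meet $W_1$ trivially. To handle this, I would instead work in the quotient $\bar V=V/W_1$, which is primary with minimal polynomial dividing $p^m$. By induction on dimension, $\bar V=\bigoplus_{i\ge2}\matspan{\bar v_i}_A$ for some classes $\bar v_i$. For each $i$, lift $\bar v_i$ via the claim above to a representative $v_i$ with $\mu_{A,v_i}=\mu_{A,\bar v_i}$. This is precisely what the construction gives: if $\mu_{A,\bar v_i}=p^r$, then $v_i p(A)^r\in W_1$, and after correction $v_ip(A)^r=0$. Now $\dim\matspan{v_i}_A=\dim\matspan{\bar v_i}_A$, so
\[\dim V=md+\sum_{i\ge2}\dim\matspan{\bar v_i}_A=\sum_{i\ge1}\dim\matspan{v_i}_A.\]
Moreover, the spans of the $v_i$ together with $W_1$ span $V$, since their images span $\bar V$. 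The dimension count then forces the sum to be direct, giving the theorem.
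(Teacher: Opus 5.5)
Your argument is correct, but it takes a different route from the paper.

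\textbf{The paper's route.} The paper never uses a maximal vector or a quotient. It starts from an arbitrary spanning family $w_1,\dots,w_k$ (e.g.\ a basis). While $\sum_i\lambda(w_i)\,d>n$, it takes an $\F[X]/(p)$-linear dependence $\sum_i w_iq_i(A)p(A)^{\lambda(w_i)-1}=0$ among the top vectors in $\kernel p(A)$. It then replaces a $w_j$ of minimal length $r$ with $q_j\neq 0$ by $\sum_i w_i(q_ip^{\lambda(w_i)-r})(A)$. Bézout with $\gcd(q_j,p)=1$ keeps the total span unchanged, the length of $w_j$ drops, and the process ends in a direct sum.

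\textbf{Your route.} You split off $\matspan{v_1}_A$ for a maximal vector and decompose $V/\matspan{v_1}_A$ by induction. Concretely, the induction runs on the induced matrix of \cref{restriction}, which is invertible with minimal polynomial a power of $p$. You then lift each generator without changing its minimal polynomial. The lifting identity $v_1(hp^{m-r})(A)=0\Rightarrow p^r\mid h$ is exactly where $\lambda(v_1)=m$ is needed, and the dimension count then forces the sum to be direct.

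\textbf{What each buys.} The paper's version gives an explicit procedure (it is the cyclic decomposition algorithm of the computational chapter) that needs neither a maximal vector nor a quotient. Yours is shorter and independent of \cref{directsumlemma} and \cref{kernschranke}. That independence is a real advantage. The paper gets its dependence from the clause ``equality only if the sum is direct'' in \cref{kernschranke}, and that clause fails in general. Take $A\in\GL{3}{\F}$ with $e_1A=e_1+e_2$, $e_2A=e_2$, $e_3A=e_3$, and $w_1=e_1$, $w_2=e_1+e_3$ (so $\mu_A=(X-1)^2$, $d=1$, $k=2$). Then $\dim\kernel(A-I)=2=kd$, yet $e_2\in\matspan{w_1}_A\cap\matspan{w_2}_A$. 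So that step of the paper needs a separate justification. The dependence does exist: multiply a nontrivial relation among the summands by a suitable power of $p(A)$.

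\textbf{What you can cut.} Your first two paragraphs are superseded by the quotient argument: the plan to induct on a complement $U$, and the pairwise trivial-intersection claim, whose last sentence is loosely worded. Only the lifting computation is actually used, so you can drop the rest.
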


\begin{proof}
	We shall prove this theorem constructively. 
	
	Let $\mathcal{W} := \{w_1,\dots,w_k\} \subseteq V$ such that $\matspan{w_1}_A + ... + \matspan{w_k}_A = V$. We know that such elements exist since any $\F$-basis of $V$ already suffices. 
	
	Now let $d := \deg(p)$. Since $\matspan{w_1}_A + ... + \matspan{w_k}_A = \matspan{w_1}_A \oplus ... \oplus \matspan{w_k}_A$ if and only if 
	$$\sum_{i=1}^k \lambda(w_i) \cdot d = n$$
	we assume that $\sum_{i=1}^k \lambda(w_i) \cdot d > n$.
	
	We then know from \cref{kernschranke} that $\dim(\kernel(p(A))) < k \cdot d$.
	
	Now consider $$B = \{w_1p(A)^{\lambda(w_1)-1}A^0,\dots,w_1p(A)^{\lambda(w_1)-1}A^{d-1}, \dots,w_kp(A)^{\lambda(w_k)-1}A^0,\dots,w_kp(A)^{\lambda(w_k)-1}A^{d-1}\}.$$
	Since $B \subset \kernel(p(A))$ and $|B| = k \cdot d$, there exists a non-trivial $\F$-linear dependence of $B$, i.e. a non-trivial $\F[X]/p\F[X]$-linear dependence of $(w_1p^{\lambda(w_1)-1}(A),...,w_kp^{\lambda(w_k)-1}(A))$. So there exist $q_1,...,q_k \in \F[X]$ with $\deg(q_i) < d$, $i = 1,...,k$, such that
	
	$$\sum_{i=1}^k w_i(q_ip^{\lambda(w_i)-1})(A) = 0.$$ 
	
	Now let $r := \min\{\lambda(w_i)|q_i \neq 0\}$ and $j \in \{1,\dots,k\}$ such that $w_j \in \{w_i \in \mathcal{W}|\lambda(w_i) = r, q_i \neq 0\}$. We then get our new set $\mathcal{W}' := \{w_1',\dots,w_k'\}$ by defining $w_i' := w_i$ for $i \neq j$ and 
	$$w_j' := \sum_{i=1}^k w_i(q_ip^{\lambda(w_i)-r})(A).$$ 
	
	We note two resulting properties of $\mathcal{W}'$:
	\begin{enumerate} 
		\item $\matspan{w_1'}_A + \dots \matspan{w_k'}_A = \matspan{w_1}_A + \dots + \matspan{w_k}_A = V$ because of the following:
		
		Per definition we have
		\begin{align}
			w_j' &= \sum_{i=1}^k w_i(q_ip^{\lambda(w_i)-r})(A) \\
				 &= w_j(q_jp^{r-r})(A) + \sum_{\substack{i=1 \\ i \neq j}}^k w_i(q_ip^{\lambda(w_i)-r})(A) \\
				 &= w_jq_j(A) + \sum_{\substack{i=1 \\ i \neq j}}^k w_i'(q_ip^{\lambda(w_i)-r})(A) \label{eqtest}.
		\end{align}
		Since $\deg(q_j) < d$ and $p$ is irreducible, we know that $q_j$ and $p$ are coprime. So by Bézout's lemma there exist polynomials $a, b \in \F[X]$ such that $$aq_j + bp = 1.$$ Then \cref{eqtest} becomes equivalent to 
		\begin{align*}
			&w_jq_j(A) = w_j' - \sum_{\substack{i=1 \\ i \neq j}}^k w_i'(q_ip^{\lambda(w_i)-r})(A) \\
			\iff &w_j(aq_j)(A) = w_j'a(A) - \sum_{\substack{i=1 \\ i \neq j}}^k w_i'(q_ip^{\lambda(w_i)-r}a)(A)\\
			\iff &w_j(1-bp)(A) = w_j'a(A) - \sum_{\substack{i=1 \\ i \neq j}}^k w_i'(q_ip^{\lambda(w_i)-r}a)(A)\\
			\iff &w_j = w_j'\underbrace{(a + bp)}_{\in \F[X]}(A) - \sum_{\substack{i=1 \\ i \neq j}}^k w_i'\underbrace{(q_ip^{\lambda(w_i)-r}a)}_{\in \F[X]}(A)
		\end{align*}
		So $w_j \in \matspan{w_1'}_A + \dots + \matspan{w_k'}_A$.
		\item $\lambda(w_j') < r = \lambda(w_j)$ because:
		\begin{align*}
			w_j'p(A)^{r-1} &= \sum_{i=1}^k w_iq_i(A)p(A)^{\lambda(w_i)-r+r-1} \\
						      &= \sum_{i=1}^k w_iq_i(A)p(A)^{\lambda(w_i)-1}\\
						      &= 0.
		\end{align*}
	\end{enumerate}

	We can repeat these steps with our new set $\mathcal{W'}$ and the process terminates after at most $\sum_{i=1}^k \lambda(w_i) - n$ such iterations. 
\end{proof}

We can see that the lengths of the vectors $v_i$ for $i = 1,\dots,k$ are decisive for the dimensions of the cyclic subspaces. To describe this more precisely we will introduce the following definition:

\begin{defi}
	Let $c \in \mathbb{N}$. We call a $k$-tuple $\lambda \coloneqq (\lambda_1,\dots,\lambda_k) \in \mathbb{N}^k$, $k \in \mathbb{N}$, such that $c = \sum_{i = 1}^{k} \lambda_i$ and $\lambda_1 \geq \dots \geq \lambda_k$ a \textbf{partition of c}. 
	Furthermore, we call $\lambda'$ with $\lambda_i' := \abs{\{j\mid\lambda_j \geq i\}}$ for $i = 1,...,k$ \textbf{the conjugate partition}.
\end{defi}
%
%
%

\begin{bem}\label{partition}
	Let $A \in \GL{n}{\F}$ such that $\chi_A = p^c$ for some irreducible $p \in \F[X]$ and $c \in \mathbb{N}$. Furthermore let $V := \matspan{v_1}_A \oplus \dots \oplus \matspan{v_k}_A$ such that $\lambda(v_1) \leq \dots \leq \lambda(v_k)$. Then the lengths of the vectors induce a partition of $c$ in the canonical way: $\lambda := (\lambda(v_1),\dots,\lambda(v_k)) \in \mathbb{N}^k$.   
\end{bem}

It is clear that the decomposition of $V$ with regards to a primary matrix $A$ is not unique. However, we can say a lot about the partition each of these decompositions induce. 

\begin{prop}\label{length}
	Let $A \in \GL{n}{\F}$, together with a decomposition of $V$ sorted by length, as described in \cref{partition}. Then, $\mu_A = p^m$, for some $m \in \mathbb{N}$ and 
	$$\lambda(v_k) = m.$$ 
\end{prop}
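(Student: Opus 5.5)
First, $\mu_A$ is a power of $p$. By \cref{samefactors}, $\chi_A = p^c$ and $\mu_A$ have the same distinct irreducible factors, and $\mu_A \mid \chi_A$ by Cayley--Hamilton. Hence $\mu_A = p^m$ for some $m \in \mathbb{N}$ with $1 \leq m \leq c$. This makes the length $\lambda(v)$ of \cref{defilength} well defined for every $v \in V$, so in particular for $v_1,\dots,v_k$.

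Next, $\lambda(v_k) \leq m$. By definition $\mu_{A,v_k} = p^{\lambda(v_k)}$, and the minimal polynomial of a vector divides $\mu_A = p^m$. So $p^{\lambda(v_k)} \mid p^m$, which gives $\lambda(v_k) \leq m$.

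For the reverse inequality I would show that $p^{\lambda(v_k)}$ annihilates all of $V$. Set $r := \lambda(v_k)$. The ordering $\lambda(v_1) \leq \dots \leq \lambda(v_k)$ gives $\lambda(v_i) \leq r$ for all $i$, so $v_i p(A)^{r} = v_i p(A)^{\lambda(v_i)} p(A)^{r-\lambda(v_i)} = 0$. Now take any $w \in V$ and write $w = \sum_{i=1}^k v_i q_i(A)$ with $q_i \in \F[X]$. Since polynomials in $A$ commute,
\[ w\,p(A)^r = \sum_{i=1}^k v_i p(A)^r q_i(A) = 0. \]
Therefore $p(A)^r = 0$, so $p^m = \mu_A$ divides $p^r$, and $m \leq r$. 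Combined with the previous paragraph, $\lambda(v_k) = m$.

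There is no real obstacle here. The one point needing care is that the bound on $\lambda(v_k)$ alone would also hold if the $\matspan{v_i}_A$ merely summed to $V$; the direct sum hypothesis is not needed. The ordering assumption from \cref{partition} is used only to identify $v_k$ as the vector of maximal length.
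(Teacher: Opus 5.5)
Your proof is correct and follows the paper's argument. Both obtain $\mu_A = p^m$ from Cayley--Hamilton and bound $\lambda(v_k) \leq m$ by divisibility. Both then use the ordering to see that a power of $p(A)$ lower than $p^m$ would annihilate every $v_i$, and hence all of $V$. The only differences are cosmetic: you argue directly with $p(A)^{\lambda(v_k)}$ rather than by contradiction with $p(A)^{m-1}$, and you rightly note that only $V = \sum_i \matspan{v_i}_A$ is needed, not directness of the sum.
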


\begin{proof}
	Since $\chi_A = p^c$ and $p$ is irreducible it follows directly from Cayley-Hamilton, that there exists an $m \in \mathbb{N}$ such that $\mu_A = p^m$. Because the minimal polynomial of a vector divides the minimal polynomial of a matrix, it is clear that $\lambda(v_k)\leq m$.
	
	Now assume that $\lambda(v_k) < m$. Because we sorted the decomposition by length, i.e. $\lambda(v_i) \leq \lambda(v_k) < m$ for all $i = 1,\dots,k$, we have 
	$$Vp(A)^{m-1} = \bigoplus_{i=1}^k\matspan{v_ip(A)^{m-1}}_A = \{0\},$$
	so $p(A)^{m-1}=0$. This is a contradiction to $p^m$ being the minimal polynomial of $A$, hence $\lambda(v_k) =  m$.
\end{proof}

As it turns out, the partition induced on a primary space by decomposing it into cyclic subspaces is unique up to permutation. 

\begin{lem}\label{uniquepart}
	Let $A \in \GL{n}{\F}$ with $\chi_A = p^c$ and $\mu_A = p^m$ for some irreducible $p \in \F[X]$ and $c \in \mathbb{N}$. Then two decompositions of $V$ into cyclic subspaces sorted by their dimensions already induce the same partition of $c$. We call this partition the \textbf{partition of A}. 
\end{lem}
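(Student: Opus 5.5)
The plan is to show that the partition is determined by invariants of $A$ alone, namely the dimensions $\dim\kernel(p(A)^i)$ for $i = 0,\dots,m$. These depend only on $A$ and not on any chosen decomposition.

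Fix a decomposition $V = \matspan{v_1}_A \oplus \dots \oplus \matspan{v_k}_A$ with lengths $\lambda_j := \lambda(v_j)$.

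\textbf{Step 1: kernels split over the summands.} Since each summand is $A$-invariant and the sum is direct, a vector $v = \sum_j u_j$ with $u_j \in \matspan{v_j}_A$ satisfies $vp(A)^i = 0$ if and only if $u_jp(A)^i = 0$ for every $j$. Hence
\[\kernel(p(A)^i) = \bigoplus_{j=1}^k \left(\kernel(p(A)^i)\cap \matspan{v_j}_A\right).\]

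\textbf{Step 2: the dimension of each piece.} By \cref{kernelrest}, the $j$-th piece is $\kernel(p(A|_{\matspan{v_j}_A})^i)$. The restriction $A|_{\matspan{v_j}_A}$ is cyclic with minimal polynomial $p^{\lambda_j}$, by \cref{randomfacts} and \cref{cyclicrestriction}. So \cref{CyclicEigenspaceSubspaces} applies and gives $\dim = \min(i,\lambda_j)\cdot d$, where $d := \deg(p)$. For $i \le \lambda_j$ this is read off directly. For $i > \lambda_j$ the whole summand is the kernel. Summing over $j$,
\[\dim\kernel(p(A)^i) = d\sum_{j=1}^k \min(i,\lambda_j).\]

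\textbf{Step 3: recover the partition.} Take consecutive differences:
\[\frac{1}{d}\left(\dim\kernel(p(A)^i) - \dim\kernel(p(A)^{i-1})\right) = \abs{\{j \mid \lambda_j \ge i\}} = \lambda_i'.\]
So the conjugate partition $\lambda'$ is determined by $A$ alone. The conjugate partition determines the partition: the number of parts equal to exactly $i$ is $\lambda_i' - \lambda_{i+1}'$. Therefore any two decompositions sorted by dimension induce the same multiset of lengths, hence the same partition of $c$. That the lengths sum to $c$ follows from $\sum_j \lambda_j d = n = cd$.

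\textbf{Main obstacle.} The delicate point is Step 1 together with the case $i > \lambda_j$ in Step 2, because \cref{CyclicEigenspaceSubspaces} is only stated for exponents $i \le m$. I would handle this by applying it to the restriction with its own exponent $\lambda_j$. For $i \ge \lambda_j$, we have $p^{\lambda_j} \mid p^i$, so $p(A)^i$ kills the whole summand, which has dimension $\lambda_j d$.
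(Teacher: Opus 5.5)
Your argument is correct, and it takes a different route from the paper's, dual to it via rank--nullity.

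The paper works with images instead of kernels. It writes $Vp(A)^{k-1}=\bigoplus_i\matspan{v_ip(A)^{k-1}}_A$ and uses $\lambda(v_ip(A)^{k-1})=\lambda(v_i)-k+1$ whenever $\lambda(v_i)\ge k$. It then runs a downward induction on $k$, starting at $k=m$, where \cref{length} guarantees summands of length $m$. The induction shows that the number of summands of length exactly $k$ agrees in both decompositions: once the multiplicities of all lengths $>k$ agree, comparing $\dim Vp(A)^{k-1}$ isolates the multiplicity of $k$.

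You instead obtain the closed formula $\dim\kernel(p(A)^i)=d\sum_j\min(i,\lambda_j)$. Its first differences give the conjugate partition $\lambda'$ in one step, with no induction. The paper's quantity $\dim Vp(A)^i=d\sum_j\max(\lambda_j-i,0)$ is just $n$ minus yours.

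Your version has two advantages. It makes the invariance manifest: the right-hand side is visibly a function of $A$ alone, namely nullities one would compute in practice. It also avoids the inductive bookkeeping, which is where the paper's write-up slips. For example, a summand of length $\ell>k$ contributes $(\ell-k+1)d$, not $\ell d$, to $\dim Vp(A)^{k-1}$; the paper's argument survives only because the same error appears on both sides.

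The paper's version, in turn, needs nothing beyond $\dim\matspan{w}_A=\lambda(w)\,d$ and how lengths drop under $p(A)$. You additionally invoke \cref{kernelrest} and \cref{CyclicEigenspaceSubspaces} for the kernel of $p(A)^i$ on a cyclic primary summand. Your separate handling of $i>\lambda_j$ there is exactly what is needed.
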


\begin{proof}	
	Consider two decompositions of $V$ into cyclic subspaces, i.e.: 
	$$V= \oplus_{i=1}^a \matspan{v_i}_A =  \oplus_{j=1}^b \matspan{w_j}_A$$
	such that $\lambda(v_1) \leq \dots \leq \lambda(v_a)$ and $\lambda(w_1) \leq \dots \leq \lambda(w_b)$. 
	
	We show by induction that
	$$L_v(k) := |\{i \in \{1,\dots,a\}\mid \lambda(v_i) = k\}| = |\{j \in \{1,\dots,b\}\mid \lambda(w_j) = k\}| =: L_w(k)$$
	for $k = 1,\dots,m$. 
	
	For $k = m$, let $I := \min\{i \in \{1,\dots,a\}\mid \lambda(v_i) = m\}$, $J := \min\{j \in \{1,\dots,b\} \mid \lambda(w_j) = m\}.$ This is well defined, since $\lambda(v_a) = \lambda(w_b) = m$ by \cref{length}. On one hand we then have:
	\begin{align*}
		Vp(A)^{m-1} &= \oplus_{i=1}^a \matspan{v_ip(A)^{m-1}}_A\\
					&= \oplus_{i=I}^a \matspan{v_ip(A)^{m-1}}_A,
	\end{align*}
	and on the other hand:
	\begin{align*}
		Vp(A)^{m-1} &= \oplus_{j=1}^b \matspan{w_jp(A)^{m-1}}_A\\
				    &= \oplus_{j=J}^b \matspan{w_jp(A)^{m-1}}_A.
	\end{align*}
	Now let $d := \deg(p)$. By comparing dimensions get:
	\begin{align*}
		\dim(Vp(A)^{m-1}) &= \sum_{i=I}^{a} \lambda(v_ip(A)^{m-1})\cdot d\\
						  &= \sum_{i=I}^a 1 \cdot d\\
						  &= (a-I)\cdot d
	\end{align*}
	along with
	\begin{align*}
		\dim(Vp(A)^{m-1}) &= \sum_{j=J}^b \lambda(w_jp(A)^{m-1})\cdot d\\
						  &= \sum_{j=J}^b \cdot d\\
						  &= (b-J)\cdot d.
	\end{align*}
	So $L_v(m) = (a-I) = (b-J) = L_w(m)$.
	
	Now suppose there exists a $k \in \{1,\dots,m-1\}$ such that $L_v(k) = L_w(k)$ for all $k' > k$.
	
	Let $I := \min\{i \in \{1,\dots,a\}\mid \lambda(v_i) \geq k\}$ and $J := \min\{j \in \{1,\dots,b\}\mid \lambda(w_j) \geq k\}$. Then 
	\begin{align*}
		Vp(A)^{k-1} &= \oplus_{i=1}\matspan{v_ip(A)^{k-1}}_A\\
					&= \oplus_{I=1}\matspan{v_ip(A)^{k-1}}_A\\
	\end{align*}
	and 
	\begin{align*}
		Vp(A)^{k-1} &= \oplus_{j=1}\matspan{w_jp(A)^{k-1}}_A\\
					&= \oplus_{J=1}\matspan{w_jp(A)^{k-1}}_A.
	\end{align*}
	So again, by comparing dimensions we get:
	\begin{align*}
		\dim(Vp(A)^{k-1}) = \sum_{i=I}^a\lambda(v_ip(A)^{k-1})_A \cdot d = \sum_{j=J}^b\lambda(w_jp(A)^{k-1})_A \cdot d.
	\end{align*}
	If we define $I' := \max\{i\in\{1,\dots,a\}\mid \lambda(v_i)\leq k\}$ and $J' := \max\{j \in \{1,\dots,b\}\mid \lambda(w_j)\leq k\}$ we can write the sums above as 
	\begin{align*}
		\sum_{i=I}^a\lambda(v_ip(A)^{k-1})\cdot d &= 
		\sum_{i=I}^{I'}\lambda(v_ip(A)^{k-1})\cdot d + \sum_{I'}^{a}\lambda(v_ip(A)^{k-1})\cdot d\\
		&= \sum_{i=I}^{I'}\lambda(v_ip(A)^{k-1}) + \sum_{\ell=k+1}^m L_v(\ell) \cdot \ell \cdot d 
	\end{align*}
	and similarly $$\sum_{j=J}^b\lambda(w_jp(A)^{k-1})\cdot d = \sum_{j=J}^{J'}\lambda(w_jp(A)^{k-1}) + \sum_{\ell=k+1}^m L_w(\ell) \cdot \ell \cdot d.$$
	Since $k+1 > k$, we have $\sum_{\ell=k+1}^m L_v(\ell) \cdot \ell \cdot d = \sum_{\ell=k+1}^m L_w(\ell) \cdot \ell \cdot d$ by our induction hypothesis, so the only vectors left to compare are those with length $k$:
	\begin{align*}
		& \sum_{i=I}^{I'}\lambda(v_ip(A)^{k-1})\cdot d = \sum_{j=J}^{J'}\lambda(w_jp(A)^{k-1})\cdot d\\
		\Leftrightarrow\text{ }& \sum_{i=I}^{I'}1 \cdot d =  \sum_{j=J}^{J'} 1 \cdot d\\
		\Leftrightarrow\text{ }& (I'-I)\cdot d = (J'-J)\cdot d.
	\end{align*}
	So we have $L_v(k) = (I'-I) = (J'-J) = L_w(k)$. 
	
	Since $(L_v(1),\dots,L_v(m))$ is precisely the conjugate partition to $(\lambda(v_1), \dots,\lambda(v_a))$ it then follows, that $v_1,\dots,v_a$ and $w_1,\dots,w_b$ must induce the same partition on $c$.
\end{proof}

\begin{bsp}
	Note that if $A$ is not primary, the dimensions of the subspaces in a decomposition of $V$ into cyclic subspaces need not be unique. For a counterexample, consider
	$$A := \begin{pmatrix}
			1 & 0 \\ 0 & 2
		  \end{pmatrix} \in \GL{2}{\F_3}.
	$$
	with $\mu_A = (X+1)(X-1)$. 
	Since $\deg(\mu_A) = 2$, $A$ is cyclic. But $A = (1) \oplus (2)$, and $(1),(2) \in \GL{1}{\F_3}$ are also both cyclic. 
\end{bsp}

\begin{bem}
	We have seen so far that for all matrices $A \in \F^{n \times n}$,  V can be uniquely decomposed into primary subspaces, i.e. $A$ is similar to a direct sum of primary matrices. By considering the vector spaces these primary matrices act on, we can again decompose each of these into cyclic subspaces. Thus, there exists a decomposition of $V = \bigoplus W_i$ into primary and cyclic subspaces with respect to $A$. The exact subspaces may not be unique, but we have shown that their dimension is. Then, since $A|_{W_i}$ is cyclic, the minimal polynomial of the matrix restrictions is also unique. We call these minimal polynomials the \textbf{elementary divisors} of $A$. 
\end{bem}

\begin{theorem}\label{elementarydiv}
	Two matrices $A,A' \in \GL{n}{\F}$ are similar if and only if they share the same elementary divisors. 
\end{theorem}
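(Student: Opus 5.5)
We prove the two implications separately. Throughout, the elementary divisors are counted as a multiset, that is, with multiplicity. The structure follows the remark preceding \cref{elementarydiv}: first pass to primary subspaces, then to cyclic subspaces inside each, and compare the resulting block-diagonal forms.

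\emph{If $A$ and $A'$ share their elementary divisors, they are similar.} By \cref{primarydecomp} and \cref{cyclicdecomposition} applied to each primary subspace, we obtain a decomposition $V = \bigoplus_i W_i$ into $A$-invariant, $A$-cyclic subspaces. By \cref{directsummat}, $A$ is similar to $\bigoplus_i A|_{W_i}$. Each $A|_{W_i}$ is cyclic by \cref{cyclicrestriction}, so by \cref{companionmat} it is similar to $M_{\mu_{A|_{W_i}}}$. Hence $A$ is similar to the direct sum of the companion matrices of its elementary divisors. Doing the same for $A'$ gives a direct sum of the same companion matrices, possibly in a different order. Permuting blocks is conjugation by a permutation-type block matrix, and similarity is transitive, so $A$ and $A'$ are similar.

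\emph{If $A$ and $A'$ are similar, they share their elementary divisors.} Write $A' = C^{-1}AC$. First, \cref{simpol} gives $\mu_A = \mu_{A'}$, so both matrices have the same irreducible factors $p_i$ with the same multiplicities $m(i)$. The map $v\mapsto vC$ is the module isomorphism from the proposition on similar matrices. Since $p_i^{m(i)}(A)C = Cp_i^{m(i)}(A')$, this map sends $V_i=\kernel(p_i^{m(i)}(A))$ onto $V_i' = \kernel(p_i^{m(i)}(A'))$. Moreover it sends $\matspan{v}_A$ to $\matspan{vC}_{A'}$, and by the corollary on $\mu_{A^{\mathcal{B}},v\mathcal{B}}$ it preserves minimal polynomials of vectors, hence lengths. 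Therefore a decomposition $V_i=\bigoplus_j \matspan{v_j}_A$ is carried to a decomposition $V_i'=\bigoplus_j\matspan{v_jC}_{A'}$ into cyclic subspaces with the same lengths. On the other hand, any decomposition of $V_i'$ into cyclic subspaces induces the same partition, by \cref{uniquepart} applied to $A'|_{V_i'}$. So the partitions for $A|_{V_i}$ and $A'|_{V_i'}$ agree, and hence the multisets $\{p_i^{\lambda}\}$ of elementary divisors coincide.

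The main obstacle is being careful about what ``the elementary divisors'' means, since \cref{uniquepart} is stated for a single primary matrix. I would make explicit that the elementary divisors of $A$ are well defined: the primary decomposition is canonical (given by kernels), and within each primary piece \cref{uniquepart}, applied to $A|_{V_i}$ via \cref{moduleiso}, gives a partition independent of the chosen cyclic decomposition. Once this is settled, the transport along $C$ described above finishes the argument.
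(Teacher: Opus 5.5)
Your proof is correct and takes essentially the paper's route. For the forward direction you transport a primary cyclic decomposition along the conjugating matrix, using $\mu_{A,v}=\mu_{A^C,vC}$. For the converse you conjugate both matrices to the direct sum of the companion matrices of the elementary divisors.

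Your version is slightly more careful than the paper's. You make the block permutation explicit, you invoke \cref{uniquepart} explicitly for well-definedness, and you keep the direction of conjugation consistent.
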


\begin{proof}
	\enquote{$\implies$}: Suppose that $A, A'$ are similar, i.e. there exists a matrix $\mathcal{B} \in \GL{n}{\F}$ such that $A^{\mathcal{B}^{-1}} = A'$. Now let $V = \oplus \matspan{w_i}_A$ be a decomposition of $V$ into primary, cyclic subspaces with respect to $A$. Then for $w_i' = w_i\mathcal{B}$, $\oplus\matspan{w_i'}_{A'}$ is a decomposition of $V$ into primary, cyclic subspaces with respect to $A'$. Since $\mu_{A,w_i} = \mu_{A^{\mathcal{B}},\mathcal{B}w_i} = \mu_{A',w_i'}$, the two matrices share the same elementary divisors.  
	
	\enquote{$\impliedby$}: Now suppose that $A$ and $A'$ share the same elementary divisors $f_1,\dots,f_k$. So by definition there exist vectors $v_1,\dots,v_k$ and $v_1',\dots,v_k'$ such that 
	\begin{enumerate}
		\item $V = \bigoplus_{i=1}^{k}\matspan {v_i}_A = \bigoplus_{i=1}^{k}\matspan{v_i'}_A$
		\item $\text{and }  \mu_{A,v_i} = \mu_{A',v_i'} = f_i, \text{ for } i = 1,\dots, k.$
	\end{enumerate}
	So there exist bases $\mathcal{B},\mathcal{B}'$ of $V$ such that 
	$$A^{\mathcal{B}^{-1}} = A^{\mathcal{B'}^{-1}} = \bigoplus_{i=1}^k M_{f_i}.$$
	Thus $A^{\mathcal{B}^{-1}\mathcal{B}'^{-1}} = A'$. 
\end{proof}

\begin{bem}
	Given a matrix $A \in \F^{n\times n}$ with elementary divisors $f_1,\dots,f_k \in \F[X]$, the matrix 
	$$\bigoplus_{i=1}^k M_{f_i}$$ 
	described in the proof of \cref{elementarydiv} is called the \textbf{primary canonical form} of $A$. 
\end{bem}

\chapter{The Jordan normal form}

\begin{lem}[Jordan Block on cyclic primary spaces]\label{Jordan Block}
	Let $A \in \GL{n}{\F}$ be both cyclic and primary, i.e. $\chi_A = \mu_A = p^m$ for some irreducible $p \in \F[X]$, $m \in \mathbb{N}$ with $d := \deg(p)$. Furthermore, let $v$ be a cyclic vector of $V$. Then $\mathcal{B} := (b_1, \dots, b_n) \in V$ with $b_1 := v$ and 
	$$b_{di+r} := vp(A)^iA^{r-1},\; 1 \leq r \leq d, 0\leq i < m$$ 
	is a basis of $V$, so
	$$\mathcal{B} = (vA^0, \dots, vA^{d-1}, vA^0p(A),\dots, vA^{d-1}p(A),\dots,vA^0p(A)^{m-1}, \dots vA^{d-1}p(A)^{m-1}).$$
	Furthermore
	$$J(p^m) := A^\mathcal{B} =
		\begin{pNiceArray}{cccccc}
		M_p & N_d & 0 & \dots & 0 & 0\\
		0 & M_p & N_d & \dots & 0 & 0\\
		0 & 0 & M_p & \dots & 0 & 0\\
		0 & 0 & 0 & \dots & 0 & 0\\
		\vdots & \vdots & \vdots & \vdots & \vdots & \vdots\\
		0 & 0 & 0 & \dots & 0 & M_p
		\end{pNiceArray} \in \F^{md\times md}
	$$ 
	with $0$ being the zero matrix in $\F^{d \times d}$, $M_p$ being the companion matrix of $p$ and $N_d \in \F^{d\times d}$ such that $(N_d)_{i,j} := \begin{cases}
		1 \text{, if } (i,j) = (1,d) \\
		0 \text{, else}
	\end{cases}.$

	We call $J(p^m)$ the \textbf{generalised Jordan block} of $p^m$. 
\end{lem}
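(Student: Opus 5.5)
First I will show that $\mathcal{B}$ is a basis, and then I will compute the image of each basis vector under $A$ directly. Since $\chi_A=\mu_A=p^m$ we have $n=md$, so $\mathcal{B}$ has exactly $n$ vectors and it suffices to prove linear independence (or spanning). By \cref{randomfacts}, $(v,vA,\dots,vA^{n-1})$ is a basis of $V=\matspan{v}_A$. Hence it is enough to show that the $n$ polynomials $g_{di+r}:=p^iX^{r-1}$ ($1\le r\le d$, $0\le i<m$) are linearly independent in $\F[X]$, because they all have degree $<n=\deg\mu_{A,v}$. Indeed, $\deg(p^iX^{r-1})=di+r-1$ runs exactly once through $0,\dots,n-1$. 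The polynomials therefore have pairwise distinct degrees, and the transition matrix to the monomial basis $X^0,\dots,X^{n-1}$ is triangular with nonzero diagonal (leading coefficients $1$, since $p$ is monic). Evaluating at $A$ and applying to $v$ then shows that $\mathcal{B}$ is a basis.

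Next I compute $b_{di+r}A$, recalling that $A^{\mathcal{B}}$ here means the matrix whose rows are the coordinates of $b_jA$ in the basis $\mathcal{B}$, as in \cref{companionmat}. For $1\le r<d$ we get $b_{di+r}A=vp(A)^iA^{r}=b_{di+r+1}$, which produces the shifted identity rows inside the diagonal block $M_p$. For $r=d$, write $p=X^d+c_{d-1}X^{d-1}+\dots+c_0$, so that $X^d=p-\sum_{s=0}^{d-1}c_sX^s$. Then
\[
b_{di+d}A=vp(A)^iA^d=vp(A)^{i+1}-\sum_{s=0}^{d-1}c_s\,vp(A)^iA^s=b_{d(i+1)+1}-\sum_{s=0}^{d-1}c_s\,b_{di+s+1}.
\]
The sum $-\sum c_s b_{di+s+1}$ gives exactly the last row $(-c_0,\dots,-c_{d-1})$ of $M_p$ in the $i$-th diagonal block. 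The extra term $b_{d(i+1)+1}$ is the first basis vector of the next block, so it contributes a single $1$ in the last row and first column of the off-diagonal block $(i,i+1)$. For $i=m-1$ the extra term is $vp(A)^m=0$, since $\mu_{A,v}=p^m$. This explains why the final diagonal block has no block to its right.

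The main obstacle is the bookkeeping of the row/column convention. Vectors are rows and the $j$-th row of the matrix is the image of $b_j$, so the off-diagonal $1$ lands in position $(d,1)$ of the block to the right of $M_p$. The statement, however, defines $N_d$ with its $1$ at $(1,d)$. I will check this against the companion matrix convention of the paper, whose last row is $(-c_0,\dots,-c_{d-1})$, which confirms the row-image convention. Hence the nonzero entry of $N_d$ should be read at (last row, first column), i.e. the stated index $(1,d)$ is to be understood with the paper's transposed indexing. I will state the computed entry explicitly so the proof is unambiguous, and I will note that $A^{\mathcal{B}}$ is meant as $A^{\mathcal{B}^{-1}}$ in the sense of \cref{companionmat}.
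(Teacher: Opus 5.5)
Your proof is correct. Its key computation is the same one the paper uses for the off-diagonal blocks: $b_{di+d}A = b_{d(i+1)+1} - \sum_{s=0}^{d-1} c_s\, b_{di+s+1}$, with $vp(A)^m=0$ removing the extra term when $i=m-1$. The surrounding argument is organised differently.

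The paper works with the chain $V = V_0 \supseteq V_1 \supseteq \dots \supseteq V_m = \{0\}$, where $V_i = Vp(A)^i$. It asserts that $b_{di+1}+V_{i+1},\dots,b_{di+d}+V_{i+1}$ form a basis of the $d$-dimensional cyclic quotient $V_i/V_{i+1}$. From the $A$-invariance of the $V_i$ it obtains the block upper triangular shape, reads the diagonal blocks $M_p$ as the companion matrices of the induced action on these quotients, and only then determines the blocks $C_i$.

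You bypass the quotients. Linear independence of $\mathcal{B}$ comes from the degrees $\deg(p^iX^{r-1}) = di+r-1$ together with the basis $(v,vA,\dots,vA^{n-1})$, and every row, diagonal and off-diagonal, is read off from one computation. Your version is shorter and actually proves the basis property. The paper only infers it from the dimension count $\dim(V_i/V_{i+1}) = d$, which alone does not show that the $d$ cosets are independent. The paper's version is more structural: it explains the triangular shape through the invariant subspaces of a cyclic primary space and the diagonal blocks through the action on successive quotients.

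Your computed position for the entry of $N_d$ is the right one, but the discrepancy is not a transposed convention in the paper. The rows of $A^{\mathcal{B}^{-1}} = \mathcal{B}A\mathcal{B}^{-1}$ are the coordinates of the $b_jA$; this is the convention under which $M_p$ has last row $(-c_0,\dots,-c_{d-1})$. Under it, the $1$ sits at position $(d,1)$ of each superdiagonal block. So the $(1,d)$ in the statement is a misprint, as is the exponent $\mathcal{B}$ in place of $\mathcal{B}^{-1}$. The paper's own proof makes the same swap when it says the first $d-1$ columns of $C$ vanish and its $d$-th column is $(1,0,\dots,0)^T$; these should be rows. State $(d,1)$ as a correction of the statement rather than as a reading of $(1,d)$.
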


\begin{proof}
	Let $V_i = Vp(A)^i$ for $1 \leq i < m$.  
	For a fixed $0 \leq i < m$, the set of $\{b_{r+di}+V_{i+1}: 0 \leq r \leq d-1 \}$ gives us a basis $\mathcal{B}$ of $V_i/V_{i+1}$, since $\dim(V_i/V_{i+1}) = d$. So since $V_i/V_{i+1}$ is $A$-cyclic we have:
	$$A^{\mathcal{B}^{-1}} = 
	\begin{pNiceArray}{cccc}
		M_p & C_1 & \dots & 0\\
		\Block{2-1}{\vdots} & \ddots & \ddots & \vdots\\
		 & &\ddots& C_m\\
		0 & \Block{1-2}{\dots} & & M_p 
	\end{pNiceArray} \in \F^{n\times n}.$$ 
	for some $C_i \in \F^{m \times d}$. 
	Since $b_{r+1+di} = b_{r+di}A$ for all $r \neq d$ we know that the first $d-1$ columns of $C$ are zero. Furthermore, with $p= X^d + a_{d_1}X^{d-1} + \dots + a_0$ we have that
	$$b_dA = b_1A^d = b_1p(A) - a_0b_1 - \dots a_{d-1}b_d$$
	so the $d$-th column of $C$ is given by $(1, 0, \dots, 0)^T \in \F^m$. 
\end{proof}

\begin{bem}
	Continuing with the setting of \cref{Jordan Block}, if $\F$ is an algebraically closed field such as $\mathbb{C}$, we have that $p(X) = (X-a)$ for some $a \in \F$.
	So 
	$$J(p^m) = \begin{pmatrix} 
		a & 1 &  &  & 0 \\  
		  & a & 1 &  &  \\ 
		 &\ddots& \ddots{} & \\
		  & & \ddots & a & 1 \\ 
		  0 &  & &  & a 
	  \end{pmatrix} \in \F^{n \times n},$$
  	i.e. the generalised Jordan block then coincides with the Jordan block in the classic sense.
\end{bem}

We are now ready to put all of the pieces of the puzzle together.

\begin{theorem}[The Jordan normal form]
	Let $A \in \F^{n\times n}$ with elementary divisors $f_i,\dots,f_k \in \F[X]$. Then there exists a basis $B$ of $V$ such that 
	$\mathrm{JNF}(A) := A^{B^{-1}} = \oplus_{i=1}^{k}J(f_i).$
		
	We call $\JNF(A)$ the \textbf{generalised Jordan normal form of $A$}. 
\end{theorem}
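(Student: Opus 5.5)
The plan is to assemble the basis $B$ from three earlier results: the primary decomposition (\cref{primarydecomp}), the cyclic decomposition (\cref{cyclicdecomposition}), and the Jordan-block lemma (\cref{Jordan Block}). Block-diagonal assembly is then handled by \cref{directsummat}.

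First, factor $\mu_A = \prod_{i=1}^{\ell} p_i^{m(i)}$ into distinct monic irreducibles. By \cref{primarydecomp}, $V = \bigoplus_{i=1}^{\ell} V_i$ with $V_i = \kernel(p_i^{m(i)}(A))$, each $V_i$ being $A$-invariant and $\mu_{A|_{V_i}} = p_i^{m(i)}$. So each restriction $A|_{V_i}$ is primary.

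Second, apply \cref{cyclicdecomposition} to each primary restriction $A|_{V_i}$. Via the module isomorphism of \cref{moduleiso}, this gives vectors $v_{i,1},\dots,v_{i,k_i} \in V_i$ with $V_i = \bigoplus_j \matspan{v_{i,j}}_A$. Hence $V = \bigoplus_{i,j} \matspan{v_{i,j}}_A$, a decomposition into primary cyclic subspaces.

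By \cref{randomfacts}, the restriction of $A$ to $W_{i,j} := \matspan{v_{i,j}}_A$ has minimal polynomial $\mu_{A,v_{i,j}}$. This divides $p_i^{m(i)}$, so it equals $p_i^{\lambda(v_{i,j})}$. Since $A|_{W_{i,j}}$ is cyclic by \cref{cyclicrestriction}, this is also its characteristic polynomial. By the definition of elementary divisors, and the uniqueness results (\cref{uniquepart} together with uniqueness of the primary decomposition), the multiset of these polynomials is exactly $\{f_1,\dots,f_k\}$. After relabelling, write $W_1,\dots,W_k$ with cyclic vectors $w_1,\dots,w_k$ and $\mu_{A,w_t} = f_t = p^{m}$ for the appropriate $p$ and $m$.

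Third, on each $W_t$ apply \cref{Jordan Block} to the cyclic primary restriction $A|_{W_t}$ with cyclic vector $w_t$. This yields the basis
\[
\mathcal{B}_t = (w_t p(A)^i A^{r-1})_{0\le i<m,\,1\le r\le d}
\]
of $W_t$, with respect to which the matrix of $A|_{W_t}$ is $J(f_t)$. One point needs checking: these vectors lie in $W_t$ and form a basis there. This holds because $\dim W_t = md$ and the lemma is intrinsic to the cyclic subspace, so the restriction matrix in the basis $\mathcal{B}_t$ is exactly $J(f_t)$ (by \cref{restrictionop}).

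Finally, concatenate the bases to get $B := (\mathcal{B}_1,\dots,\mathcal{B}_k)$, written as the rows of a matrix. \Cref{directsummat} then gives $A^{B^{-1}} = \bigoplus_{t=1}^k A|_{W_t,\mathcal{B}_t} = \bigoplus_{t=1}^k J(f_t)$.

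The main obstacle is the bookkeeping in the second step. One must ensure that the polynomials arising from the chosen decomposition are precisely the elementary divisors $f_1,\dots,f_k$ given in the statement, and not merely some other family. This requires invoking the uniqueness in \cref{uniquepart} on each primary component, together with the uniqueness of the primary subspaces. A smaller point is making sure that \cref{Jordan Block}, which is phrased for a matrix on all of $\F^n$, transfers to the subspace $W_t$; this is where the module isomorphism of \cref{moduleiso} is used.
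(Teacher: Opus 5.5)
Your proposal is correct and takes essentially the same route as the paper: decompose $V$ into primary cyclic subspaces, apply \cref{Jordan Block} to each restriction, and assemble the pieces block-diagonally. The differences are cosmetic. You concatenate the Jordan bases of the summands directly inside $V$, whereas the paper first block-diagonalises $A$ and then conjugates by the block-diagonal matrix $\mathcal{C}$ built from the local bases. Your explicit matching of the summands' minimal polynomials with the given $f_i$ is bookkeeping that the paper leaves implicit.
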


\begin{proof}
	We have shown that $V$ can be decomposed into primary cyclic subspaces $V= V_1\oplus\dots\oplus V_k$, so there exists a basis $\mathcal{B}$ of $V$ such that
	$$A^{\mathcal{B}^{-1}} = \begin{pNiceArray}{ccc}
		A_1 &  & 0\\
		& \ddots & \\
		0 &  & A_{k}
	\end{pNiceArray} \in \F^{n \times n}$$
	with $\mu_{A_i} = f_i$. Then by \cref{Jordan Block}, there exist $\mathcal{B}_1,\dots,\mathcal{B}_k$ with $\mathcal{B}_i \in \GL{\deg(f_i)}{\F}$ such that $A_i^{\mathcal{B}^{-1}_i} = J(f_i)$. So by defining the matrix 
	$$\mathcal{C} = \begin{pNiceArray}{ccc}
		\mathcal{B}^{-1}_1 &  & 0\\
		& \ddots & \\
		0 &  & \mathcal{B}^{-1}_k
	\end{pNiceArray} \in \GL{n}{\F}$$
	we see that $A^{\mathcal{B}^{-1}\mathcal{C}}$ has the desired form. 
\end{proof}

\begin{kor}\label{JNFequiv}
	Two matrices $A,B \in \F^{n\times n}$ are similar if and only if $\JNF(A) = \JNF(B)$. 
\end{kor}

\begin{proof}
	This follows directly from \cref{elementarydiv} and the fact that the Jordan normal form of a matrix is uniquely determined by its elementary divisors.
\end{proof}

%% file: parts/04alg.tex
\chapter{Computing the Jordan normal form}

We now present algorithms to compute the Jordan normal form. For practical reasons, we only consider finite fields, so for the rest of this chapter we let $q$ be the power of a prime and $\F = \F_q$ the field with $q$ elements. 

\section{Auxiliary functions}

In order to implement the algorithms for computing the Jordan normal form efficiently, it is useful to first introduce a set of auxiliary functions that perform recurring operations. They serve as essential building blocks for the main algorithmic framework.

One of the most important functions, despite its simplicity, is the spinning algorithm as described in \cite{Par84}. Given a matrix $A \in \F^{n\times n}$ and a vector $v \in \F^n$, it computes the sequence of images $\{v,vA,\dots,vA^{n-1}\}$. If $A$ is cyclic, this yields a basis of $\F^n$.

Rather than calculating each matrix power $A^i$ explicitly, we can iteratively obtain $vA^{i}$ from $vA^{i-1}$ through a single matrix-vector multiplication instead, thereby avoiding costly matrix-matrix multiplications. This optimization reduces the complexity to $\On(n^3)$ field operations.

\begin{algorithm}[H]
	\caption{Spinning algorithm}
	\KwData{$A \in \F^{n \times n},\ v \in \mathbb{F}^{1 \times n}$}
	\KwResult{A matrix with $v, vA, \dots, vA^{n-1}$ as its rows}
	\SetKwFunction{Spinning}{Spinning}
	\SetKwProg{Fn}{Function}{:}{}
	\Fn{\Spinning{$A$, $v$}}{
		spun $\gets [v]$\;
		\For{$i \gets 1$ \KwTo $n-1$}{
			$v \gets v A$\;
			append $v$ to the end of spun\;
		}
		\Return{spun}\;
	}
\end{algorithm}

Alternatively, if one knows the dimension $d$ of $\matspan{v}_A$ for a vector $v \in V$, it may be useful to only compute $v,vA,\dots$ until the $(d-1)$-th power. Then $\{v,vA,\dots,vA^{d-1}\}$ is a basis of $\matspan{v}_A$.

\begin{algorithm}[H]
	\caption{Spinning algorithm until $d$}
	\KwData{$A \in \F^{n \times n},\ v \in \mathbb{F}^{1 \times n}$}
	\KwResult{A matrix with $v, vA, \dots, vA^{d-1}$ as its rows}
	\SetKwFunction{SpinUntil}{SpinUntil}
	\SetKwProg{Fn}{Function}{:}{}
	\Fn{\SpinUntil{$A$, $v$, $d$}}{
		spun $\gets [v]$\;
		\For{$i \gets 1$ \KwTo $d-1$}{
			$v \gets v A$\;
			append $v$ to the end of spun\;
		}
		\Return{spun}\;
	}
\end{algorithm}

The algorithm then takes $\On(n^2\cdot d)$ field operations. 

Given a cyclic matrix $A \in \F^{n \times n}$, we can test a random vector $v \in \F^n$ for cyclicity by using the spinning algorithm on it and checking whether the matrix one obtains has full rank, i.e. whether its rows form a basis of $V$. Since this also directly yields a basis of $V$ in terms of $v$, it is useful to return it alongside the vector. 

Clearly, the efficiency of such an algorithm depends on the probability of a randomly chosen vector being cyclic. In that regard, Neumann and Praeger showed a very useful estimation of that probability.

\begin{lem}\label{cyclicprob}
	Let $A \in \F^{n\times n}$ and let $p_1,\dots,p_s$ be the distinct irreducible polynomials which divide the minimal polynomial $\mu_A$ and let $d_i := \deg(p_i)$. Then the proportion of vectors in $V$ which are cyclic vectors for $A$ is 
	$$\mathcal{P}_c = \prod_{i=1}^s (1-\frac{1}{q^{d_i}})$$
\end{lem}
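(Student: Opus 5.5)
The proof reduces to the primary case via \cref{primarydecomp} and then counts vectors directly in each primary component. Implicitly we assume $A$ is cyclic, since otherwise there are no cyclic vectors at all. By \cref{cyclicequiv} we then have $\chi_A = \mu_A = \prod_{i=1}^s p_i^{m(i)}$ and $\deg(\mu_A) = n$.

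First, apply \cref{primarydecomp}: $V = \bigoplus_{i=1}^s V_i$ with $V_i = \kernel(p_i^{m(i)}(A))$ and $\mu_{A|_{V_i}} = p_i^{m(i)}$. Since $\dim(V_i) \geq \deg(\mu_{A|_{V_i}}) = m(i)d_i$ and these degrees sum to $n = \sum_i \dim(V_i)$, equality holds for each $i$. So every $A|_{V_i}$ is cyclic and primary by \cref{CyclicEigenspaceDim}.

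Next, characterise cyclic vectors componentwise. Write $v = \sum_i v_i$ with $v_i \in V_i$. Exactly as in the proof of \cref{maxvec}, the polynomials $\mu_{A,v_i}$ are powers of the pairwise coprime $p_i$, and $vf(A) = 0$ holds iff $v_if(A) = 0$ for all $i$. Hence $\mu_{A,v} = \prod_i \mu_{A,v_i}$. By \cref{randomfacts}, $v$ is cyclic iff $\deg(\mu_{A,v}) = n = \deg(\mu_A)$, i.e.\ iff $\mu_{A,v} = \mu_A$. Since $\mu_{A,v_i} \mid p_i^{m(i)}$, this is equivalent to $\mu_{A,v_i} = p_i^{m(i)}$ for every $i$, that is, $\lambda(v_i) = m(i)$. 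I expect this step to be the only delicate one: it needs both directions, and it is exactly where the hypothesis that $A$ is cyclic enters, through the equivalence of $\deg(\mu_{A,v}) = n$ with $\mu_{A,v} = \mu_A$.

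Then count in each primary component. We have $\lambda(v_i) = m(i)$ iff $v_i \notin \kernel(p_i(A)^{m(i)-1}) \cap V_i$. By \cref{kernelrest} and \cref{CyclicEigenspaceSubspaces}, applied to the cyclic primary $A|_{V_i}$, this subspace has dimension $(m(i)-1)d_i$ inside $V_i$, which has dimension $m(i)d_i$. Hence the proportion of good $v_i$ in $V_i$ is $1 - q^{(m(i)-1)d_i}/q^{m(i)d_i} = 1 - q^{-d_i}$. Finally, $V \to \prod_i V_i$, $v \mapsto (v_i)_i$, is a bijection and the conditions on the components are independent. So the number of cyclic vectors is $\prod_i q^{m(i)d_i}(1-q^{-d_i})$, and dividing by $q^n = \prod_i q^{m(i)d_i}$ gives $\mathcal{P}_c$.
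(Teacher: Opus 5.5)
Your proof is correct, and it supplies something the paper does not: the paper gives no argument for this lemma, only the reference \cite[Theorem 2.2]{NeChe95}. You are right to add cyclicity of $A$ as an explicit hypothesis. As printed, the statement is false without it: for $A = I_2$ the formula predicts the proportion $1 - q^{-1}$, but $\deg(\mu_A) = 1 < 2$, so no vector is cyclic. The paper only applies the lemma to cyclic matrices anyway (\cref{findcyclicvec} takes a cyclic $A$ as input).

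Compared with the bare citation, your route stays inside the thesis. It uses only \cref{primarydecomp}, \cref{randomfacts} and the dimension formula of \cref{CyclicEigenspaceSubspaces}. It also isolates the two facts that drive the count: $\mu_{A,v} = \prod_i \mu_{A,v_i}$, and inside each cyclic primary component the non-generators form a subspace of codimension $d_i$.

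If you want it shorter, the same computation reads naturally in the module language of \cref{famodule}. For a cyclic vector $v$, the map $f \mapsto vf(A)$ identifies $\F[X]/(\mu_A)$ with $V$. The vector $vf(A)$ is cyclic if and only if $f$ is a unit modulo $\mu_A$, and the Chinese remainder theorem gives $\prod_i \bigl(q^{m(i)d_i} - q^{(m(i)-1)d_i}\bigr)$ units. Your primary decomposition is exactly this CRT splitting, and $\kernel(p_i(A)^{m(i)-1}) \cap V_i$ is the maximal ideal $(p_i)/(p_i^{m(i)})$ of the local factor.

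One cosmetic point: \cref{kernelrest} and \cref{CyclicEigenspaceSubspaces} are stated for matrices in $\GL{n}{\F}$, whereas $A|_{V_i}$ is nilpotent when $p_i = X$. Their proofs never use invertibility, so nothing breaks.
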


\begin{proof}
	See \cite[Theorem 2.2]{NeChe95}
\end{proof}

Let $A$, $p_1,\dots,p_s$ and $d_1,\dots,d_s$ as in \cref{cyclicprob}.
Given an $\varepsilon \in [0,1)$ we can make an estimation for the number $N$ of tries within which we can guarantee finding a cyclic vector for $A$ with probability $\varepsilon$. We have

\begin{align*}
	\mathcal{P}_c &= \prod_{i=1}^s 1-q^{-d_i}\\
				  &\geq \prod_{i=1}^s 1-q^{-1}\\
				  &= (1-q^{-1})^s\\
				  &\geq 1-q^{-1}
\end{align*}

So if $\mathcal{P}_f = (1-\mathcal{P}_c)^N$ is the probability of failure for finding a cyclic vector within $N$ tries, we have $\mathcal{P}_f < \varepsilon$ if and only if $N > \log((1-\varepsilon)^{-1}) \cdot \log(q)^{-1}$.
In particular we see that for growing field size, the amount of tries we need to find a cyclic vector becomes very small. Even for $q = 9$ and $\varepsilon = 0.99$ we already have $N > \frac{\log(100)}{\log(9)} \approx 2.095$.

\begin{algorithm}[H]\caption{Find cyclic vector}\label{findcyclicvec}
	\KwData{$A \in \mathbb{F}^{n \times n}$ cyclic, $\varepsilon$ probability of finding a cyclic vector}
	\KwResult{A matrix with $v,vA,\dots,vA^{n-1}$ as its rows for a cyclic vector $v$}
	\SetKwFunction{FindCyclicVector}{FindCyclicVector}
	\SetKwProg{Fn}{Function}{:}{end}
	\Fn{\FindCyclicVector{$A$}}{
		\For{$i \gets 1$ \KwTo $\ceil{\frac{\log((1-\varepsilon)^{-1})}{\log(q)}}$}{
			$v \gets$ Random($\mathbb{F}^n\backslash\{0\}$)\;
			$generators \gets$ \Spinning{$A, v$}\;
			\If{$\mathrm{Rank}(generators) = n$}{
				\Return{generators}\;
			}
		}
	}
\end{algorithm}

Given an $\varepsilon \in [0,1)$, \cref{findcyclicvec} will make at most $\ceil{\frac{\log((1-\varepsilon)^{-1})}{\log(q)}}$ attempts to find a cyclic vector. Since each try we perform once the spinning algorithm and compute once the rank of the resulting $\F^{n\times n}$ matrix, which costs $\On(n^3)$ and $\On(n^2)$ respectively, each try takes $\On(n^3)$ field operations. Thus, this algorithm costs $\On(\ceil{\frac{\log((1-\varepsilon)^{-1})}{\log(q)}}\cdot n^3)$ field operations. From here on, we choose $\varepsilon = 0.99$, so $\frac{\log(100)}{\log(q)} \leq 6$ for all $q \geq 2$ and thus \cref{findcyclicvec} costs $\On(n^3)$ field operations. 

Using a similar strategy as in the spinning algorithm, we can also efficiently evaluate matrix in polynomials if the result is also multiplied with a vector.

\begin{algorithm}[H]
	\caption{Evaluate matrix in polynomial multiplied with vector}
	\label{polyeval}
	\KwData{$A \in \F^{n \times n}$, $p(X) = a_0 + \dots + a_{d-1}X^{d-1} \in \F[X]$, $v \in \F^n$}
	\KwResult{$vp(A) \in \F^n$}
	\SetKwFunction{EvaluatePolynomialWithVec}{EvaluatePolynomialWithVec}
	\SetKwProg{Fn}{Function}{:}{end}
	\Fn{\EvaluatePolynomialWithVec{$A,p,v$}}{
	$v_1 \gets a_{d-1} \cdot v$\;
	\For{$i \gets d-2$ \KwTo $0$}{
		$v_1 \gets v_1 \cdot A$\;
		\If{$a_i \neq 0$}{
			$v_1 \gets v_1 + a_i \cdot v$\;
		}
	}
	\Return{$v_1$}\;
	}
\end{algorithm}

While the straightforward computation of $vp(A)$ would require $\On(n^3\cdot\frac{d(d+1)}{2})$ field operations. In contrast, this approach reduces the cost to $\On(n^2\cdot d)$ field operations.

\begin{algorithm}[H]
	\caption{Evaluate matrix in polynomial with spun vector}\label{nicepolyeval}
	\KwData{$A \in \F^{n \times n}$, $p(X) = a_0 + \dots + a_{d-1}X^{d-1} \in \F[X]$ with $d \leq n-1$, $\{v,vA,\dots,vA^{n-1}\}$}
	\KwResult{$vp(A) \in \F^n$}
	\SetKwFunction{EvaluatePolynomialWithSpun}{EvaluatePolynomialWithSpun}
	\SetKwProg{Fn}{Function}{:}{end}
	\Fn{\EvaluatePolynomialWithSpun{$A,p,vSpan$}}{
		$v_1 \gets a_{d-1} \cdot v$\;
		\For{$i \gets d-2$ \KwTo $0$}{
			$v_1 \gets vSpan[i]$\;
			\If{$a_i \neq 0$}{
				$v_1 \gets v_1 + a_i \cdot v$\;
			}
		}
		\Return{$v_1$}\;
	}
\end{algorithm}

Moreover, if the spinning algorithm has already been applied in advance, i.e. the set ${v,vA,\dots,vA^{n-1}}$ is available, and $\deg(p) = d \leq n-1$, the algorithm can be adapted to directly use these precomputed vectors.
This simple modification lowers the computational effort even further to $\On(n\cdot d)$ field operations.

\section{Primary Decomposition}

The first step in computing the Jordan normal form of a matrix $A$ is computing the primary decomposition of the vector space with respect to $A$. An efficient method to do this has been proposed by Allan Steel  \cite{St97}, avoiding the direct evaluation of matrices in polynomials.

\begin{algorithm}[H]
	\caption{Primary Decomposition (Modified version of Steel's algorithm)}
	\KwIn{A matrix $A \in \F^{n\times n}$}
	\KwOut{Matrix $B$ such that $BAB^{-1}$ is in primary decomposition form}
	\SetKwFunction{Ech}{EcheloniseMat}
	\SetKwFunction{Factors}{Factors}
	\SetKwFunction{Collected}{Collected}
	\SetKwFunction{PrimaryDecomposition}{PrimaryDecomposition}
	\SetKwProg{Fn}{Function}{:}{end}
	\Fn{\PrimaryDecomposition{$A$}}
	{
		$v \gets$ random vector in $\F^n$\;
		$rank \gets 0$\;
		$\mathcal{L} \gets [\,]$; $\mathcal{G} \gets [\,]$\;
		
		\While{$rank < n$}{
			$m(x) \gets$ $\mu_{A,v}$\;
			$p(x) \gets 1$ \tcp*{Product of known primary components}
			\ForEach{$i = 1$ \KwTo $\text{Size}(\mathcal{L})$}{
				$g_i(x) \gets \mathcal{G}[i]$\;
				\If{$\gcd(m, g_i) \neq 1$}{
					$e \gets$ maximal power such that $g_i^e\mid m$\;
					$f(x) \gets m(x)/g_i^e(x)$\;
					$w \gets \EvaluatePolynomialWithVec(A, f(x), v)$\;
					$p(x) \gets p(x) \cdot g_i^e(x)$\;
					$W \gets \Spinning(w, A)$\;
					$\mathcal{L}[i] \gets \Ech(\text{Concat}(W, \mathcal{L}[i]))$\;
				}
			}
			$v \gets \textbf{\EvaluatePolynomialWithVec}(A, p(x),v)$\;
			$m \gets m / p(x)$\;
			\If{$m \neq 1$}{
				$\text{factors} \gets \Collected(\Factors(m))$ \tcp*{List of $(q_i, e_i)$}
				\ForEach{$(q_i, e_i)$ in factors}{
					$f_i(x) \gets m / q_i^{e_i}$\;
					$w \gets \EvaluatePolynomialWithVec(A, f_i, v)$\;
					$W \gets \Spinning(w, A)$\;
					
					Append $W$ to $\mathcal{L}$;
					 Append $q_i$ to $\mathcal{G}$\;
				}
			}
			$C \gets$ matrix with all matrices in $\mathcal{L}$ as rows\;
			$rank \gets \text{number of rows of } C$\;
			\If{$rank < n$}{
				$v \gets$ Random vector in $\F^n\backslash\bild(C)$\;
			}
		}
		
		\Return{$C$}\;
	}
\end{algorithm}

The central idea of the algorithm is to construct vectors that have primary minimal polynomials with respect to $A$. If $\mu_A = \prod_{i=1}^{\ell} p_i^{m(i)}$ then for each distinct factor $p_i$ a sufficient number of those vectors is generated so their $A$-spans form the corresponding primary subspace. In practice, this is done by recording the $A$-spans of the aforementioned vectors in sets $L_1,\dots,L_{\ell}$, according to the minimal polynomial of the vectors, and comparing the span of the new vectors with the already recorded ones.

This approach avoids the costly direct evaluation of matrices in polynomials, which is very expensive. Instead, whenever a matrix is evaluated in a polynomial, it is also multiplied by a vector, allowing us to use \cref{polyeval}.

The original algorithm proposed by Steel additionally records a generating vector for each factor $p_i$ when the subspace $\kernel(p_i(A))$ is $A$-cyclic. However, the exact details on this have been omitted here for the sake of simplicity. Importantly, this modification does not affect the overall computational complexity, which remains in the order of $\On(n^4)$ field operations, as established in \cite{St97}.

Computing the primary decomposition is the most expensive step in computing the Jordan normal form. We will see however, that we can greatly increase the algorithm's efficiency if $A$ happens to be cyclic.

\section{Cyclic decomposition}

Once we are working with the restriction on a primary space, so we have a matrix $A \in \F^{n\times n}$ such that $\mu_A = p^m$ for some irreducible $p \in \F[X]$, we can apply the procedure outlined in the proof of \cref{cyclicdecomposition} to compute a cyclic decomposition of $V$. To facilitate this, we first introduce several auxiliary functions that will be essential in the subsequent computations.

Recall that for a primary matrix $A$, the minimal polynomial of any $v \in V$ takes the form $p^r$ for some $\lambda(v) := r \leq m$. $\lambda(v)$ is called the $A$-length of $v$.
Consequently, determining the minimal polynomial of a vector $v \in V$ is reduced to checking whether $vp(A)^i = 0$ for increasing powers $i$.

The following function computes the $A$-length $\lambda(v)$ of a vector $v$. 

\begin{algorithm}[H]
	\caption{$A$-length of a vector}
	\KwData{$p(A)$ for a matrix $A \in \F^{n\times n}$ with $\mu_{A} = p^m$ for $p \in  \F[X]$ irreducible, $v \in V$}
	\KwResult{$\lambda(v),$}
	\SetKwFunction{VectorLength}{VectorLength}
	\SetKwProg{Fn}{Function}{:}{end}
	\Fn{\VectorLength{$p(A),m,v$}}{
		\If{$v = 0$}{
			\Return $[0,v]$\;
		}
		$lastv \gets v$\;
		\For{$i \gets 1$ \KwTo $m$}{
			$v \gets vp(A)$\;
			\If{$v = 0$}{
				\Return $[i,lastv]$\;
			}
			$lastv \gets v$\;
		}
	}
\end{algorithm}

Note that computing the length $r$ of a vector $v$ also gives us $vp(A)^{r-1}$, which will be useful later, so we return both values. The computation of the $A$-length of a vector requires $\On(n^2)$ field operations. 

Another essential function is to determine an $\F[X]$-linear dependence of a set of vectors $\{v_1,\dots,v_k\}$, in the sense of $\cdot_A$ as described in \cref{famodule} for a matrix $A \in \F^{n\times n}$. We only consider the special case where each vector $v_i$ has length one. In this case, the problem reduces to finding an $\F$-linear dependence of $\{v_1A^0,\dots,v_1A^{d-1},\dots,v_kA^0,\dots v_kA^{d-1}\}$, where $d := \deg(p)$.

\begin{algorithm}[H]
	\caption{$\F[X]/p\F[X]$-linear dependence of vectors}\label{dependence}
	\KwData{$A \in \GL{n}{\F}$ with $\mu_A = p^m$ for $p \in \F[X]$ irreducible, $d = \deg(p)$, $vecs = (v_1\dots,v_k)$ with $v_i \in V$ and $\lambda(v_i) = 1$}
	\KwResult{$q_1\dots,q_k \in \F[X]$ such that $\sum_{i=1}^kv_iq_i(A)=0$, with $\deg(q)<\deg(p)$}
	\SetKwFunction{FXLinearDependence}{FXLinearDependence}
	\SetKwProg{Fn}{Function}{:}{end}
	\Fn{\FXLinearDependence{$A,d,vecs$}}{
		$toSolve \gets []$\;
		\For{$i\gets1$ \KwTo $\mathrm{Size}(vecs)$}{
			$vspan \gets \SpinUntil{A,vecs[i],d}$\;
			Append $vspan$ to the end of $toSolve$\;
		}
		$relationBasis \gets$ basis of $\kernel(toSolve)$\;
		\If{$relationBasis$ is empty}{
			Print(No linear dependence could be found.)\;
			\Return 0\;
		}
		$relation \gets relationBasis[1]$\;
		$qis \gets []$\;
		\For{$i\gets1$ \KwTo $\mathrm{Size}(vecs)$}{
			$qi \gets$ polynomial with $[relation[i\dots (i\cdot d)-1]]$ as coefficients\;
			Append $qi$ to end of $qis$\;
		}
		\Return $qis$\;
	}
\end{algorithm}

Finding such a linear dependence requires $\On(n^2)$ field operations.

With these auxiliary procedures at hand, we are now prepared to tackle the main task of this section: computing the decomposition of a primary space into a direct sum of cyclic subspaces. As mentioned before, we proceed following the procedure outlined in the proof of  \cref{cyclicdecomposition}. We begin by initializing a set of vectors that span the entire space and computing the $A$-length of each of these vectors. Then, using \cref{dependence} we compute their $\F[X]/p\F[X]$-linear dependence to compute the new set of generating vectors. We repeat this step iteratively until the $A$-spans of all of the vectors form a direct sum decomposition of the space. 

The computational cost of decomposing a primary matrix $A$ into a direct sum of cyclic matrices is dominated by three main tasks:

\begin{enumerate}
	\item Selecting generators $w_1,\dots,w_k$ whose combined $A$-span equals $V$,
	\item determining their $A$-lengths,
	\item and finding an $\F[X]$ linear dependency to reduce the total lengths of the generating vectors. 
\end{enumerate}

Finding the generating vectors involves repeated spinning of vectors and echelonising their span, performed at most $n$ times, for a total cost of $\On(n^4)$ field operations.
Once the generators are found, computing their A-lengths incurs an additional $\On(k\cdot n^2)$ operations.

\begin{algorithm}[H]
	\caption{Cyclic decomposition of primary space}
	\KwData{$A \in \F^{n\times n}$ with $\mu_{A} = p^m$ for $p \in \F[X]$ irreducible}
	\KwResult{Matrix $B$ such that $BAB^{-1} = A_1\oplus\dots \oplus A_k$ with $A_i$ cyclic}
	\SetKwFunction{CyclicDecomposition}{CyclicDecomposition}
	\SetKwProg{Fn}{Function}{:}{end}
	\Fn{\CyclicDecomposition{$A,p,m$}}{
		$d \gets \deg(p)$\;
		\lIf{$m\cdot d = n$}{\Return $I_n$}
		$w \gets$ Random vector in $\F^n\backslash\{0\}$\;
		$ws \gets [$w$]$ \tcp*{List of generating vectors}
		$gens \gets$ \Spinning($A,w$)\;
		Echelonise $gens$\;
		\While{\texttt{Size}$(gens)<n$}{
			$w \gets$ Random vector in $\F^n\backslash\matspan{gens}$\;
			Append \Spinning($A,w$) to end of $gens$\;
			Echelonise $gens$\;
			Append $w$ to end of $ws$\;
		}
		$k \gets \abs{ws}$\;
		$M \gets [\;]$\;
		\For{$i \gets 1$ \KwTo $k$}{
			$[w_i,\lambda(w_i),wp(A)^{\lambda(w_i)-1}] \gets \VectorLength(A,p,ws[i])$\; 
			Append $[w_i,\lambda(w_i),w_ip(A)^{\lambda(w_i)-1}]$ to end of $M$\;
		}
		\While{$\sum_{i=1}^k \lambda(w_i)\neq \frac{n}{d}$}{
			$dependentVecs \gets [w_kp(A)^{\lambda(w_k)-1},\dots, w_kp(A)^{\lambda(w_k)-1}]$\;
			$q_1,\dots, q_k \gets$ \FXLinearDependence{$A,d,dependentVecs$}\;
			$\lambda(w_j) \gets \min\{\lambda(w_1),\dots,\lambda(w_k)\}$\;
			$w' \gets \sum_{i=1}^k w_i\cdot q_i(A)\cdot p(A)^{\lambda(w_i) - \lambda(w_j)}$\;
			\uIf{$w' = 0$}{
				Remove $M[j]$\;
				$k \gets \abs{M}$\;
			}
			\Else{
				$M[j] = \VectorLength(p(A),m,w')$\;
			}
		}
		$B = [\;]$\;
		\For{$i \gets 1\dots k$}{
			Append \SpinUntil{$A,w_i,\lambda(w_i)\cdot d$} to end of $B$\;
		}
		\Return{$B$}\;
	}
\end{algorithm}

The subsequent iterative reduction requires $\On(n^2)$ operations per step and is executed at most $km-n$ times. Finally, spinning all of the resulting vectors costs $\On(n^3)$ per spin, performed at most $n$ times. 

Combining these estimates, the overall complexity remains $\On(n^4)$ field operations for decomposing a primary matrix $A \in \F^{n\times n}$ into cyclic blocks.

\section{Putting together the Jordan normal form}

We now turn to the computation of the individual Jordan blocks corresponding to each primary component. This is done by computing a basis of $V$ as described in \cref{Jordan Block}.

\begin{algorithm}[H]
	\caption{Compute Jordan block form}
	\KwData{$A \in \F^{n\times n}$ with $\mu_A = p^m$, $p$ irreducible}
	\KwResult{Matrix $B \in \GL{n}{\F}$ such that $BAB^{-1} = J(p^m)$}
	\SetKwFunction{JordanBlock}{JordanBlock}
	\Fn{\JordanBlock{$A,p,m$}}{
	$d \gets \deg(p)$\;
	$v,\dots, vA^{n-1} \gets$ \FindCyclicVector$(A)$\;
	$B \gets 0 \in \F^{n\times n}$\;
	Set first $d$ rows of $B$ to $v_1,\dots,vA^{d-1}$\; 
	\For{$r \gets 1$ \KwTo $d$}{
		\For{$i \gets 1$ \KwTo $m-1$}{
			$b \gets$ \EvaluatePolynomialWithSpun($A,p,[vA^{r},\dots,vA^{n-1}]$)\;
			Set row $d\cdot i + r$ of $B$ to $b$\;
		}
	}
	\Return{$B$}\;
	}
\end{algorithm}

The computation begins with finding a cyclic vector, which, as established earlier, requires $\On(n^3)$ field operations. Next, the polynomial $p$ is evaluated at $A$ a total of $n$ times. Since each evaluation is applied to a vector whose images under powers of $A$ are already known, we can employ the efficient procedure from \cref{nicepolyeval}, reducing the cost to $\On(n\cdot d)$ field operations per evaluation. 
Consequently, the overall complexity of computing the conjugacy matrix for the Jordan block of a primary, cyclic matrix in $A \in \F^{n\times n}$ is dominated by the cyclic vector search, resulting in a total of $\On(n^3)$ field operations.

Now computing the Jordan normal form of a matrix is only a matter of putting all of the pieces together. 

\begin{algorithm}[H]
	\caption{Jordan normal form}
	\KwData{$A \in \F^{n\times n}$}
	\KwResult{Matrix $B \in \GL{n}{\F}$ such that $BAB^{-1} = \mathrm{JNF}(A)$}
	\SetKwFunction{JordanNormalform}{JordanNormalform}
	\Fn{\JordanNormalform{$A$}}{
		$\{\{p_1,m_1\},\dots,\{p_{\ell},m_{\ell}\}\} \gets$ multiset of collected factors of $\mu_A$\;
		$PrimaryBasis \gets$ \PrimaryDecomposition{A}\;
		$\bigoplus_{i=1}^{\ell}P_i \gets A^{PrimaryBasis^{-1}}$\;
		\For{$i \in \{1,\dots,\ell\}$}{
			$CyclicBasis_i \gets$ \CyclicDecomposition{$P_j,p,m$}\;
			$\bigoplus_{j=1}^k C_j \gets P_i^{CyclicBasis_i}$\;
			\For{$j \in \{1,\dots,k\}$}{
				$JordanBasis_j \gets \JordanBlock(C_j)$\;
			}
			$CombinedBases_i \gets (\bigoplus_{j=1}^k JordanBasis_j) \cdot CyclicBasis_i$ 
		}
		\Return{($\bigoplus_{i=1}^{\ell}CombinedBases_i)\cdot PrimaryBasis$}
	}
\end{algorithm}

We see that computing the Jordan normal form for an arbitrary $A \in \F^{n\times n}$ depends very much on the structure of the matrix. Let $\ell$ denote the number of primary subspaces of $V$ and for each primary subspace let $k_i$ be the number of cyclic subspaces it decomposes into. Computing the primary decomposition costs $\On(n^4)$ field operations. Then computing the cyclic decomposition of the primary subspaces costs $\On(\frac{n}{d_i}^4)$ field operations each, where $d_i$ denotes the dimension of each primary subspace. Finally, we compute the Jordan block for each of the cyclic subspaces, so we do it $k_1 + \dots + k_{\ell} = \ell$ times. Thus, computing the Jordan blocks contained in each primary subspace costs $\On((\ell\cdot \frac{n}{d_i})^3) $ field operations. 
Consequently, computing the Jordan normal form requires $\On(\ell \cdot n^4)$ field operations.

We see that the complexity of computing the Jordan normal form depends on the sizes of the primary and cyclic subspaces arising in the decomposition of V.
Let $\ell$ be the number of primary subspaces $P_1,\dots,P_{\ell}$ with $d_i = \dim(P_i)$ (so $\sum_{i=1}^{\ell}d_i = n)$ and let $k_i$ be the number of cyclic subspaces $C_{i,1},\dots,C_{i,k_i}$ in the $i$-th primary subspace with $m_{i,j} := \dim(C_{i,j})$ (so $\sum_{j=1}^{\ell} m_{i,j} = d_i$). 

The primary decomposition of $A$ costs $\On(n^4)$ field operations. Subsequently, the cyclic decomposition of each primary block $P_i$ costs $\On(n_i^4)$ field operations. Finally, computing the Jordan block form of each cyclic block $C_{i,j}$ costs $\On(m_{i,j}^4)$ field operations. Thus the overall complexity is given by
$$\On(n^4)  + \sum_{i=1}^{\ell} \On(n_i^4) + \sum _{i=1}^{\ell}\sum_{j=1}^{k_i}\On(m_{i,j}^4).$$ 
Since both $\sum_{i=1}^{\ell} \On(n_i^4) \leq n^4$ and $\sum _{i=1}^{\ell}\sum_{j=1}^{k_i}\On(m_{i,j}^4) \leq n^4$, the worst case is dominated by $\On(n^4)$. However, the complexity of the computation can be significantly lower if the primary and cyclic blocks are small.

%% file: parts/03special.tex
\chapter{Special cases}

We now present two special cases of matrices, for which we can speed up the computation of the Jordan normal form significantly using theoretical considerations. Both of these special cases are easily checked by computing the factorised minimal polynomial of the matrix at the beginning. Again, for this chapter let $q$ be a power of a prime and $\F = \F_q$ be the field with $q$ elements. 

\section{Algorithms}

\subsection{Cyclic Matrices}
The first special case we will consider is that of cyclic matrices. It turns out that almost all matrices over finite fields are cyclic, and the proportion increases with increasing field size. The following result has been proven by Praeger and Neumann in \cite[p.270]{NeChe95}:
\begin{theorem}
	Let $A \in \F^{n\times n}$. Then the probability $p$ that $A$ is cyclic is given by 
	$$p > 1 - \frac{1}{(q^2-1)(q-1)}$$ 
\end{theorem}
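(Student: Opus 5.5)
The plan is to compute the proportion of non-cyclic matrices via the cycle-index / generating-function method and bound it from above. A matrix $A$ is cyclic if and only if, in its primary decomposition (\cref{primarydecomp}), every primary component $A|_{V_i}$ is cyclic. By \cref{uniquepart}, this means every primary component has partition $(c_i)$ with a single part. Similarity classes are determined by the elementary divisors (\cref{elementarydiv}). The number of matrices in a class is $\abs{\GL{n}{\F}}$ divided by the order of its centraliser, and the centraliser factorises over the primary components.

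For a cyclic primary component with $\chi = p^c$, $\deg p = d$, the centraliser is the unit group of $\F[X]/(p^c)$. Its order is $q^{dc}(1-q^{-d})$. Summing over all classes, the proportion $c_n$ of cyclic matrices in $\F^{n\times n}$ satisfies the generating-function identity
\[
\sum_{n\ge 0} c_n u^n=\prod_{p}\Bigl(1+\sum_{c\ge1}\frac{u^{dc}}{q^{dc}(1-q^{-d})}\Bigr),
\]
where the product runs over monic irreducible $p$ with $d=\deg p$, and $u$ is rescaled so that $\abs{\F^{n\times n}}=q^{n^2}$ normalises correctly. Equivalently, I would follow Neumann and Praeger and write the non-cyclic proportion as $1-c_n$. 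This quantity is controlled by classes in which some primary component has at least two parts; the dominant contribution comes from a repeated linear factor, i.e. a component similar to $\mathrm{diag}(a,a)$.

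The key steps, in order, are as follows. First, use the above factorisation to express $1-c_n$ as a sum over ``bad'' local factors. Second, bound the contribution of a single irreducible $p$ of degree $d$ with a non-cyclic component by roughly $q^{-3d}(1-q^{-d})^{-1}(1-q^{-2d})^{-1}$. This comes from the smallest non-cyclic centraliser, $\GL{2}{\F_{q^d}}$, of order $q^{4d}(1-q^{-d})(1-q^{-2d})$, together with the remaining factor $q^{-n^2}$-normalisation. Third, sum over $p$, using that there are at most $q^d/d$ irreducibles of degree $d$. The degree-one terms give the leading term $q\cdot q^{-3}/((1-q^{-1})(1-q^{-2}))$, which simplifies to $1/((q^2-1)(q-1))\cdot q^{0}$ up to the correct power. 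The higher-degree terms, together with the comparison of $c_n$ with its limit as $n\to\infty$, must be shown to be absorbed, yielding the strict inequality.

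I expect the main obstacle to be exactly this last step. The estimate must hold uniformly in $n$, not just in the limit, and the tail over irreducibles of degree $d\ge 2$ and over larger non-cyclic partitions must be controlled tightly enough to obtain precisely the constant $\frac{1}{(q^2-1)(q-1)}$. I would first try to show that $c_n$ is monotone, or at least that $c_n\ge\lim_{N\to\infty}c_N$, by comparing coefficients of the product. I would then evaluate the infinite product via $\prod_p(1-u^d/q^{d})^{-1}=(1-u)^{-1}$-type identities for the count of monic polynomials. Since the thesis cites \cite[p.270]{NeChe95} for this statement, in the write-up I would ultimately refer there for the delicate uniform tail estimate.
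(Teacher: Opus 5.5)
The paper gives no proof of this statement; it simply quotes it from Neumann–Praeger \cite[p.~270]{NeChe95}. Your last sentence defers the decisive estimate to the same source, so your write-up would rest entirely on that citation. The computation you place in front of it does not stand on its own, and two of its steps are wrong as written.

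First, the generating function is misnormalised. The number of cyclic matrices in $\F^{n\times n}$ is $\abs{\GL{n}{\F}}$ times the sum of the reciprocal centraliser orders, so the correct identity is
\[
1+\sum_{n\ge1}\frac{c_n}{\prod_{i=1}^{n}(1-q^{-i})}\,u^n=\prod_{p}\Bigl(1+\sum_{c\ge1}\frac{u^{dc}}{q^{dc}(1-q^{-d})}\Bigr).
\]
Since $\abs{\GL{n}{\F}}/q^{n^2}=\prod_{i=1}^n(1-q^{-i})$ is not of the form $\alpha^n$, no rescaling of $u$ repairs your version; as written it gives $c_1=q/(q-1)>1$.

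Second, the local estimate has the wrong exponent. A $p$-primary component of type $(1,1)$ contributes on the order of $q^{-4d}$, not $q^{-3d}$. For instance, each scalar matrix is a proportion $q^{-4}$ of $\F^{2\times 2}$, and as $n\to\infty$ the $p$-part has type $(1,1)$ with probability $q^{-4d}\prod_{i\ge3}(1-q^{-di})$. With $q^{-3d}$ your degree-one sum is $q/((q-1)(q^2-1))$, a factor $q$ too large, and the phrase ``up to the correct power'' hides this.

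The more serious gap remains even after correcting the exponent: bounding each irreducible's contribution by $q^{-4d}(1-q^{-d})^{-1}(1-q^{-2d})^{-1}$ and summing over $p$ cannot produce the stated constant. The $q$ linear polynomials alone contribute $q\cdot q^{-4}/((1-q^{-1})(1-q^{-2}))=1/((q-1)(q^2-1))$, which is exactly the bound. The irreducibles of degree $\ge 2$ then add roughly $\tfrac12 q^{-6}$ on top, so there is nothing left to ``absorb'' them.

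The inequality is in fact very tight. In the limit the non-cyclic proportion is $1-(1-q^{-5})\prod_{i\ge3}(1-q^{-i})=q^{-3}+q^{-4}+2q^{-5}+q^{-6}+\cdots$, while $1/((q^2-1)(q-1))=q^{-3}+q^{-4}+2q^{-5}+2q^{-6}+\cdots$. A proof therefore has to keep the exact local factors, not just their leading terms, gain the margin at order $q^{-6}$, and do all this uniformly in $n$. Neither this nor the monotonicity of $c_n$ you hope for is supplied. That is precisely the content of the cited theorem, so either present the statement as a quoted result, as the paper does, or carry out this finite-$n$ estimate in full.
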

So even for a field as small as $\F_5$, the probability of a random matrix $A \in \F_5^{n\times n}$ being cyclic is already greater than $0.98$. Furthermore, we can easily check the cyclicity of a matrix by checking whether the degree of its minimal polynomial is equal to its dimension. This warrants modifying the algorithm for the computation of the Jordan normal form to take advantage of the structure of cyclic matrices. 

Recall the algorithm for the primary decomposition of $V$ given a matrix $A \in F^{n\times n}$. It generates random vectors with primary minimal polynomials to get a set of generating vectors for each distinct irreducible factor of $\mu_A$. This can be done much more efficiently if it is known that the matrix is cyclic beforehand, thanks to the following proposition:

\begin{prop}\label{alrcy}
	Let $A \in \F^{n\times n}$ be cyclic and let $v$ be a cyclic vector. Furthermore, let $\mu_A = \prod_{i=1}^{\ell}p_i^{m(i)}$ be the factorisation of the minimal polynomial into distinct irreducible factors. Then 
	\begin{enumerate}
		\item $V_i := \kernel(p_i^{m(i)}(A))$ is cyclic for $i \in \{1,\dots,\ell\}$
		\item and a generating vector of $V_i$ is given by $vq_i(A)$, where $q_i := \prod_{\substack{j\neq 1 \\j=1}}^{\ell}p_j^{m(j)}$.
	\end{enumerate}
\end{prop}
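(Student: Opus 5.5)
The plan is to prove the second statement first, since it implies the first. Note that the product defining $q_i$ should run over $j\neq i$, as in \cref{primarydecomp}. By part 2 of \cref{primarydecomp} we have $V_i = \bild(q_i(A))$.

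The first step is the inclusion $\matspan{vq_i(A)}_A \subseteq V_i$. Since $vq_i(A)\in\bild(q_i(A)) = V_i$ and $V_i$ is $A$-invariant, the whole $A$-span of $vq_i(A)$ lies in $V_i$.

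For the reverse inclusion I use that $v$ is cyclic. Every $w \in V_i = \bild(q_i(A))$ can be written as $w = uq_i(A)$ for some $u \in V$. Because $\matspan{v}_A = V$, we have $u = vf(A)$ for some $f\in\F[X]$. Hence
\[w = vf(A)q_i(A) = (vq_i(A))f(A) \in \matspan{vq_i(A)}_A,\]
using that polynomials in $A$ commute. Thus $V_i = \matspan{vq_i(A)}_A$, which gives statement 2 and shows $V_i$ is $A$-cyclic.

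To get statement 1 in the matrix sense, i.e.\ that $A|_{V_i}$ is cyclic, I apply \cref{cyclicrestriction}. The only subtle point is matching the reading of \enquote{cyclic} in the statement, and \cref{cyclicrestriction} handles this. As a consistency check, $\mu_{A,vq_i(A)} = p_i^{m(i)}$: we have $\mu_{A,v} = \mu_A$ since $\deg\mu_{A,v}=n$, and dividing by $q_i$ gives the minimal polynomial of $vq_i(A)$. This agrees with part 3 of \cref{primarydecomp} via \cref{randomfacts}.

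Alternatively, \cref{cyclicsub} applies directly, since $V_i\le_A V$ and $V$ is $A$-cyclic. However, the explicit generator argument above is what yields statement 2, so I will present that argument as the main route.
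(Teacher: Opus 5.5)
Your argument is correct, but for statement 2 it takes a different route from the paper.

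The paper obtains statement 1 directly from \cref{cyclicsub}. For statement 2 it asserts $\mu_{A,vq_i(A)} = p_i^{m(i)}$ without justification. From this it concludes $\matspan{vq_i(A)}_A \leq V_i$ with $\dim\matspan{vq_i(A)}_A = \deg(p_i^{m(i)})$ via \cref{randomfacts}. It then closes with a global dimension count: these dimensions add up to $\deg(\mu_A) = n = \sum_i \dim V_i$, so every inclusion must be an equality.

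You instead combine the identity $V_i = \bild(q_i(A))$ from \cref{primarydecomp} with $V = \matspan{v}_A$ and the commutativity of polynomials in $A$, and prove $V_i = \matspan{vq_i(A)}_A$ elementwise. This is more direct. It needs neither the minimal polynomial of $vq_i(A)$ nor the direct sum over all $i$ at once. It also shows $\bild(q(A)) = \matspan{vq(A)}_A$ for every $q \in \F[X]$ whenever $v$ is cyclic.

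What the paper's route produces along the way is the dimension $\dim V_i = \deg(p_i^{m(i)})$, which the subsequent algorithm uses when spinning $vq_i(A)$. In your write-up, derive this from your equality together with part 3 of \cref{primarydecomp} and \cref{randomfacts}, as you also indicate, rather than from the unproved \enquote{dividing by $q_i$} remark.

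Your correction of the index in $q_i$ to $j \neq i$ is right.
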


\begin{proof}
	The first statement follows directly from \cref{cyclicsub} since $V_i$ is $A$-invariant.
	
	Now, since $v$ is a cyclic vector, we know that $\mu_{A,v} = \mu_A = \prod_{i=1}^{\ell}p_i^{m(i)}$. Now let $q_i$ be as described in the proposition. Then for $w_i := v\cdot q_i(A)$ we have $\mu_{A,w_i} = p_i^{m(i)}$. So $w_i \in V_i$ and moreover $\matspan{w_i}_A \leq V_i$, since the latter is $A$-invariant. 
	Because 
	\begin{align*}
		\sum_{i=1}^{\ell}\dim(\matspan{w_i}_A) &= \sum_{i=1}^{\ell}\deg(\mu_{A,w_i})\\
											   &= \sum_{i=1}^{\ell}\deg(p_i^{m(i)})\\
											   &= \deg(\mu_A) \\
											   &= n.
	\end{align*}							   
	 we have $\dim(\matspan{w_i}_A) = \kernel(p_i^{m(i)}(A))$, so the two subspaces are equal.
\end{proof}

This tells us that we only need to find a cyclic vector for $A$ to be able to directly construct all of the cyclic generating vectors for the primary subspaces. 

\begin{algorithm}[H]
	\caption{Primary decomposition of cyclic matrix}
	\KwData{$A \in \F^{n \times n}$ cyclic, $\mu_A = \prod_{i=1}^{\ell}p_i^{m(i)}$}
	\KwResult{A matrix $B$ such that $BAB^{-1}$ is in primary decomposition form}
	\SetKwFunction{PrimaryDecompositionCyclic}{PrimaryDecompositionCyclic}
	\SetKwProg{Fn}{Function}{:}{end}
	\Fn{\PrimaryDecompositionCyclic{$A,\mu_A$}}{
		$vs \gets \FindCyclicVector(A)$ \tcp*{\{$v,\dots,vA^{n-1}\}$ for a cyclic vector $v$}
		$B \gets [\;]$\;
		\For{$i \in \{1,\dots,\ell\}$}{
			$q_i \gets \mu_A/p_i^{m(i)}$\;
			$w \gets \EvaluatePolynomialWithSpun(A,q_i,vs)$\;
			$wspan \gets \SpinUntil(w,A,\deg(p_i^{m(i)}))$\;
			Append $wspan$ to the end of $B$\;
		}
		\Return{$B$}\;
	}
\end{algorithm}

This approach avoids testing multiple vectors and echelonising their $A$-spans entirely. Moreover, computing a cyclic vector $v$ directly yields the basis $\{v,vA,\dots,vA^{n-1}\}$ of $V$, enabling the use of \cref{nicepolyeval} for efficient polynomial evaluation at $A$. 

Let $\ell$ be the number of primary subspaces of $V$ for $A \in \F^{n\times n}$. Finding a cyclic vector costs $\On(n^3)$ field operations. In the main loop, for each $i$ let $d_i := \deg(p_i^{m(i)})$. We perform:
\begin{enumerate}
	\item a polynomial division costing $\On(n\cdot d_i)$ field operations,
	\item the evaluation of $v_iq_i(A)$, costing $\On(n\cdot\frac{n}{d_i})$ field operations,
	\item and a construction of a basis of $\matspan{w_i}_A$ using the spinning algorithm, costing $\On(n^2\cdot\frac{n}{d_i})$ field operations. 
\end{enumerate}
So overall the number of field operations taken by the main loop is bound by $\On(\ell\cdot n^3)$. Consequently, computing the primary decomposition of a cyclic matrix only costs $\On(\ell\cdot n^3)$ field operations. 

Since it also follows from \cref{alrcy} each primary subspace of $V$ is already $A$-cyclic, no further cyclic decomposition is required, and the Jordan block form for each can be computed directly. For a primary block of size $d_i$, the computation of the Jordan block form costs $\On(d_i)$ field operations. Since $\sum_{i=1}^{\ell} d_i = n$ and there are $\ell$ such blocks the cost of computing all Jordan blocks is $\On(n^3)$ field operations. So computing the Jordan normal form of a cyclic matrix overall costs $\On(\ell\cdot n^3)$ field operations, where $\ell \leq n$. 

\subsection{Matrices with irreducible minimal polynomial}

We now consider another special case of matrices that can be easily identified by the factorisation of their minimal polynomials. 

\begin{bem}
	Let $A \in \F^{n\times n}$ with $\mu_A = p$ irreducible. Then its characteristic polynomial is equal to $p^c$ for some $c\in \mathbb{N}$ and the Jordan normal form of $A$ is of the following form:
	$$B = I_c \otimes M_p := 
	\begin{pNiceArray}{cccc}
		M_p& & & \\
		&M_p& & \\
		& &\ddots& \\
		& & &M_p 
	\end{pNiceArray}
	\in \mathbb{F}^{cs \times cs}$$
	where $I_c$ denotes the identity matrix of dimension $c$ and $\otimes$ denotes the tensor product. 
\end{bem}

If $\mu_A$ is irreducible, the following special case of \cref{directsumlemma} gives us a very powerful result for computing their Jordan normal form.

\begin{lem} \label{directsumlemmaspecial}
	Let $A \in \F^{n\times n}$ with $\mu_A = p$ for some irreducible $p \in \F[X]$. Furthermore, let $v, w \in V\backslash\{0\}$. It then follows that:
	$$\langle v \rangle_A + \langle w \rangle_A = \langle v \rangle_A \oplus \langle w \rangle_A$$
	if and only if $\langle v \rangle_A \neq \langle w \rangle_A$.
\end{lem}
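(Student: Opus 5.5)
The plan is to show that, when $\mu_A = p$ is irreducible, every nonzero $A$-span is a minimal nonzero $A$-invariant subspace. The claim then reduces to the fact that two minimal invariant subspaces either coincide or intersect trivially.

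\emph{Step 1: structure of a single span.} Let $v \in V\backslash\{0\}$. Since $\mu_{A,v} \mid \mu_A = p$ and $p$ is irreducible, $\mu_{A,v} \neq 1$ forces $\mu_{A,v} = p$. By \cref{randomfacts}, $\matspan{v}_A$ is $A$-invariant and $A$-cyclic with $\dim(\matspan{v}_A) = \deg(p) =: d$. Moreover the restriction $A|_{\matspan{v}_A}$ has minimal polynomial $p = p^1$ of degree $d$, so it is cyclic and primary with $m=1$. By \cref{CyclicEigenspaceSubspaces}, applied with $m = 1$ via \cref{moduleiso}, the only $A$-invariant subspaces of $\matspan{v}_A$ are $\{0\}$ and $\matspan{v}_A$. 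Alternatively, for any nonzero $u \in \matspan{v}_A$ the same reasoning gives $\dim\matspan{u}_A = d$, so $\matspan{u}_A = \matspan{v}_A$.

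\emph{Step 2: the equivalence.} Put $S := \matspan{v}_A \cap \matspan{w}_A$. It is $A$-invariant as an intersection of invariant subspaces, so by Step 1 either $S = \{0\}$ or $S = \matspan{v}_A$; likewise $S = \{0\}$ or $S = \matspan{w}_A$.

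If $\matspan{v}_A \neq \matspan{w}_A$, then $S \neq \{0\}$ would give $\matspan{v}_A = S = \matspan{w}_A$, a contradiction. Hence $S = \{0\}$ and the sum is direct.

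Conversely, if $\matspan{v}_A = \matspan{w}_A$, then $S = \matspan{v}_A \neq \{0\}$ because $v \neq 0$, so the sum is not direct.

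\emph{Main obstacle.} The only delicate point is Step 1, namely that a nonzero span has no proper nontrivial invariant subspace. One must be careful to apply \cref{CyclicEigenspaceSubspaces} to the restricted matrix rather than to $A$ itself, since $A$ need not be cyclic. The direct dimension argument (every nonzero $u$ has $\mu_{A,u} = p$) avoids this subtlety, so I would use that argument as the primary route.
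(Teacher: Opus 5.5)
Your proof is correct and follows the same route as the paper. Both arguments first get $\mu_{A,v}=\mu_{A,w}=p$, conclude that each span has no $A$-invariant subspaces other than $\{0\}$ and itself, and then apply this to the $A$-invariant intersection $\matspan{v}_A\cap\matspan{w}_A$; your dimension argument (every nonzero $u$ in the span has $\mu_{A,u}=p$, hence $\matspan{u}_A=\matspan{v}_A$) simply gives a self-contained justification of the step the paper draws from the cyclicity of the restrictions.
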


\begin{proof}
	Since $\mu_{A,v}$ and $\mu_{A,w}$ divide $p$ and $\mu_{A}$ is irreducible, it follows that $\mu_{A,v} = \mu_{A,w} = p$. Because both $A|_{\matspan{v}_A}$ and $A|_{\matspan{w}_A}$ are cyclic, the only non-trivial $A$-invariant subspaces of $\matspan{v}_A$ and $\matspan{w}_A$ are $\matspan{v}_A$ and $\matspan{w}_A$ respectively.
	
	If $\matspan{v}_A = \matspan{w}_A$, then $\matspan{v}_A + \matspan{w}_A$ is not a direct sum. Let $\matspan{v}_A \neq \matspan{w}_A$. As $\matspan{v}_A \cap \matspan{w}_A$ is an $A$-invariant subspace of both $\matspan{v}_A$ and $\matspan{w}_A$, we have $\matspan{v}_A \cap \matspan{w}_A = \{0\}$. Therefore, $\matspan{v}_A + \matspan{w}_A = \matspan{v}_A \oplus \matspan{w}_A$ 
\end{proof}

To compute the Jordan normal form of a matrix $A$ whose minimal polynomial $\mu_A = p$ is irreducible one can proceed as follows. Select a random nonzero vector $v \in \F^{n\times n}$. Since $p$ is irreducible, the minimal polynomial of $v$ is also $p$, so $\dim(\matspan{v}_A) = \deg(p)$. A basis for $\matspan{v}_A$ can then be obtained by applying the spinning algorithm until $\deg(p)$. Next, repeatedly choose new vectors that do not already lie in the direct sum of the $A$-spans of the vectors already recorded. By \cref{directsumlemmaspecial}, the $A$-span of $w$ intersects this sum nontrivially if and only if $w$ already belongs to it. Thus, selecting $w$ outside of the current sum yields a new $A$-invariant, cyclic subspace of dimension $\deg(p)$. Iterating this process produces vectors $v_1,\dots,v_k$ such that $V = \matspan{v_1}_A\oplus\dots \oplus\matspan{v_k}_A$. 

Since the Jordan Blocks of $A$ are all of the form $M_p$ the conjugacy matrix computed so far is already the conjugacy matrix for the Jordan normal form. 

\begin{algorithm}[H]
	\caption{Jordan Normal form for matrices with irreducible minimal polynomials}
	\KwData{$A \in \F^{n \times n}$ with $\mu_A$ irreducible}
	\KwResult{$B\in \GL{n}{\F}$ such that $BAB^{-1} = \mathrm{JNF}(A)$}
	\SetKwFunction{JordanNormalFormIrred}{JordanNormalFormIrred}
	\SetKwProg{Fn}{Function}{:}{end}
	\Fn{\JordanNormalFormIrred{$A$}}{
	$blocksize := \deg(\mu_A)$\;
	$v \gets$ random vector in $\F^n\backslash{0}$ \;
	$\mathrm{V_1} \gets \SpinUntil(v, A, blocksize)$\; 
	k := 1\;
	\For{$i \in \{1,\dots,\frac{n}{blocksize}-1\}$}{
		$w \gets$ random vector in $\F^n\backslash \sum_{i=1}^k\matspan{V_i}$\;
		$V_i \gets \SpinUntil(w, A, blocksize)$\;
		$k \gets k+1$\;
	}
	$B \gets$ matrix with $V_1,\dots,V_k$ as its rows\;
	\Return{$B$}\;
	}
\end{algorithm}

Let $d := \deg(\mu_A)$. The overall complexity is determined by the spinning algorithm which is invoked $\frac{n}{d}$ times. With each invocation, the vector is spun exactly $d$ times, resulting in a cost of $\On(d\cdot n^2)$ field operations. So overall, computing the Jordan normal form for $A$ costs $\On(\frac{n}{d} \cdot d\cdot  n^2) = \On(n^3)$ field operations.

\section{Practical Comparisons}

In this section, we present a series of runtime experiments comparing the different algorithms for computing the Jordan normal form over finite fields. This is done to illustrate their practical performance depending on both the input field and matrix size as well as the tangible advantage of specialized methods over naive implementations. The results are summarized in the following figures. 

All experiments were carried out using the computer algebra system GAP (version 4.14.0). The computations were performed on a machine equipped with an AMD Ryzen 5 3600 six-core processor (3.60GHz), 32 GB RAM, running Windows 10. The implementation was done in GAP, relying on Meinolf Geck's unpublished \texttt{NoFoMa} package \cite{Gec23}. These details are included to ensure reproducibility and to provide context for the reported runtimes.  

\begin{figure}[h]
	\includegraphics[scale = 0.1]{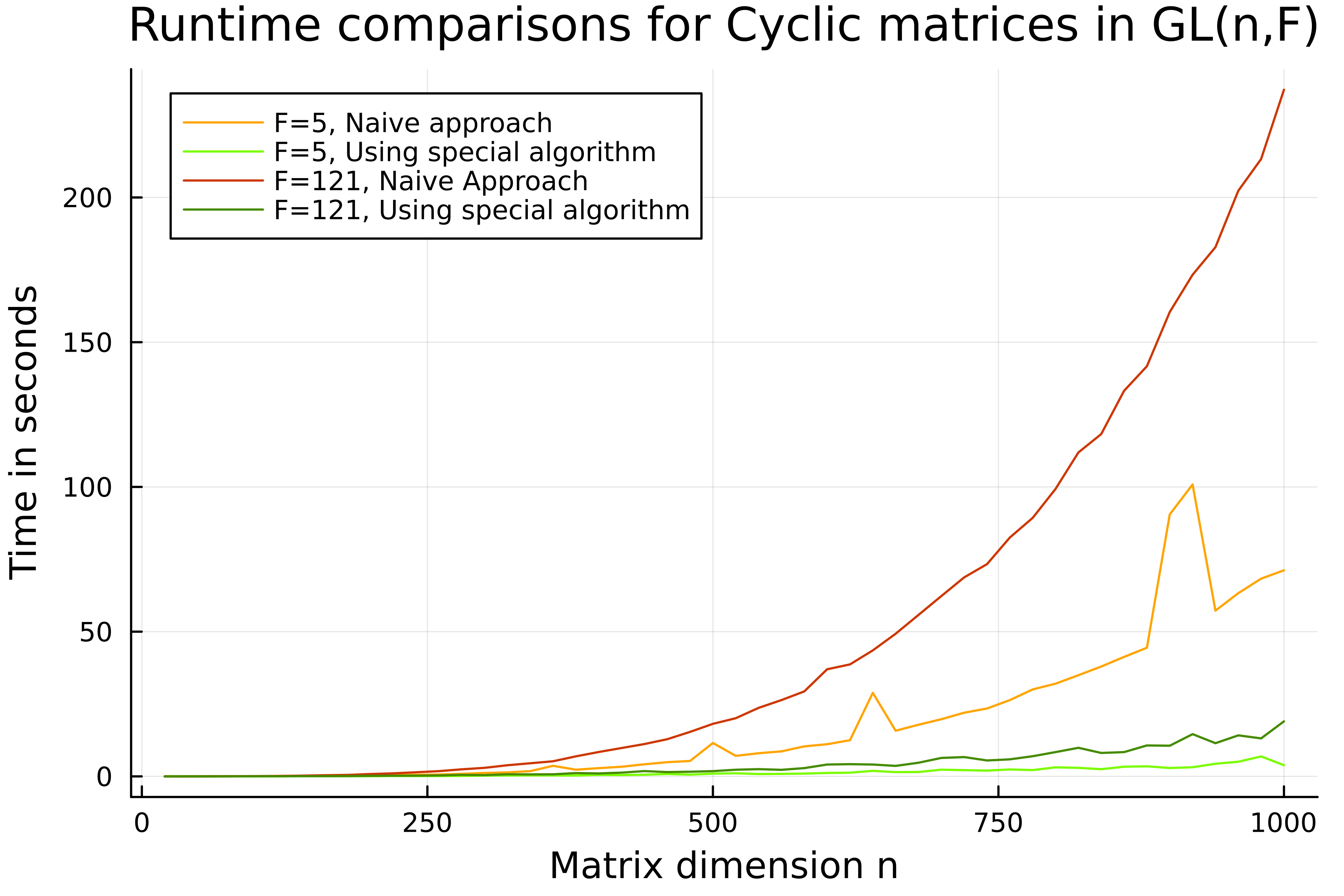}
	\centering
\end{figure}

For matrix dimensions ranging from $n=20$ to $n=1000$, in steps of $20$, a random cyclic matrix over $\F_5$ was generated. For each such matrix, the Jordan normal form was computed twice: once using the naive algorithm and once using the specialised algorithm for cyclic matrices. The same procedure was then carried out for matrices over $\F_{11^2}$. The measured runtimes are shown in the figure above. In the cases of larger dimensions and field size, the specialized method achieves speedups of more than a factor of twenty compared to the naive approach. 

\begin{figure}[h]
	\includegraphics[scale = 0.1]{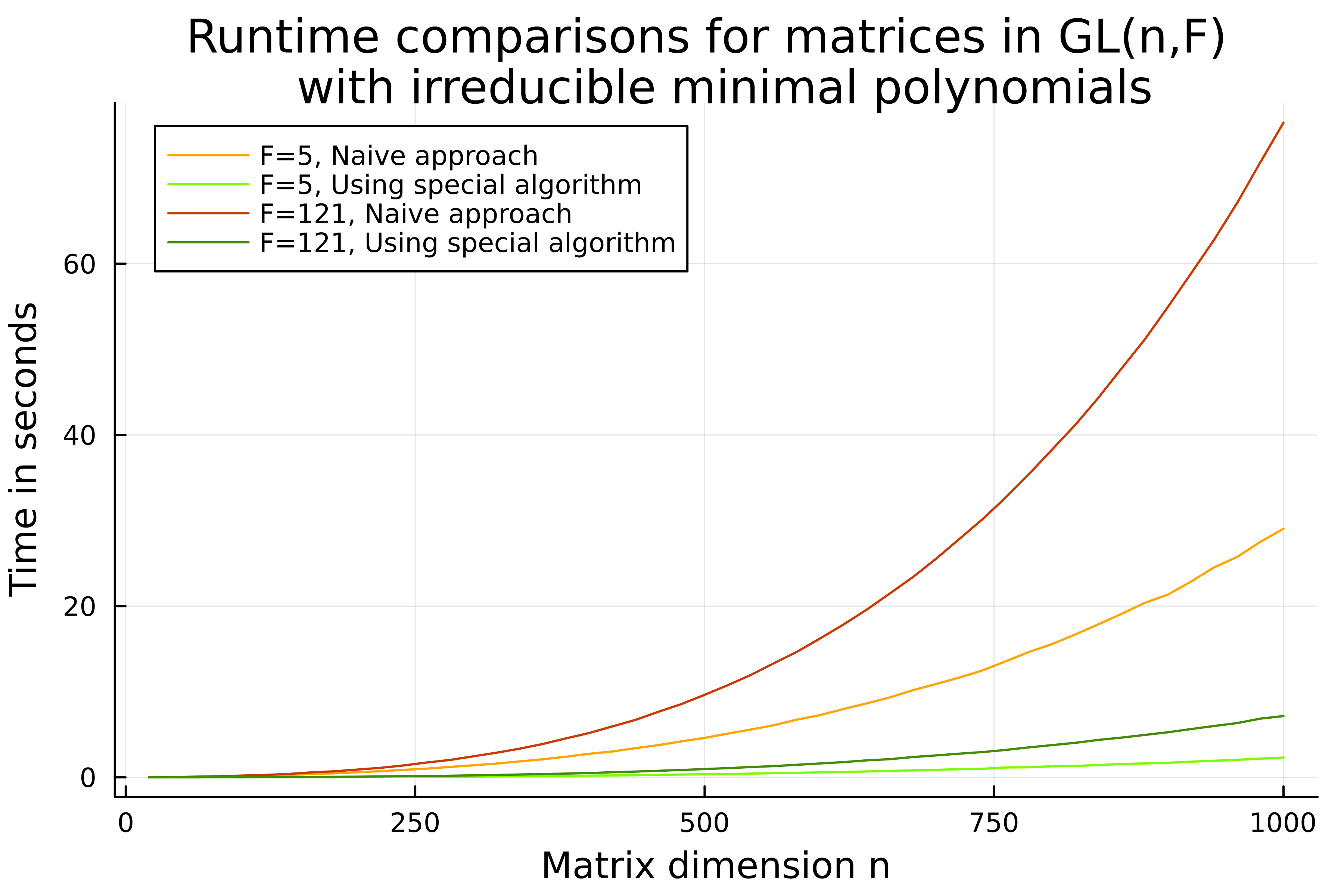}
	\centering
\end{figure}

The same procedure was repeated with matrices whose minimal polynomial is irreducible. In this setting, the specialized algorithm still outperforms the naive one, yielding speedups of a factor of 10 for larger matrix sizes and the larger field. 

These experiments illustrate the practical differences in runtime between the naive and specialized algorithms. In particular, the specialized approaches consistently provide significant performance improvements, especially as the dimension and field size grow.

%% file: parts/05conj.tex
\chapter{Conjugacy in the classical groups}
We have shown that two matrices are similar if and only if their Jordan normal forms coincide. Since the conjugating matrix always lies in $\GL{n}{\F}$ by definition, two matrices $A,B \in \GL{n}{\F_q}$ are conjugate in $\GL{n}{\F_q}$ if and only if their Jordan normal forms coincide. However, the following question arises: in which other groups is having the same Jordan normal form equivalent to conjugacy in the group? And which additional requirements do we need for conjugacy in groups in which the equivalence does not hold? 

This question has been thoroughly discussed in \cite{Fra20} and \cite{Fra25}, we will summarise the results in this chapter without proof. 

\section{Finite Classical Groups}

We begin by introducing the classical groups of finite fields, following the definitions used in \cite{Gro01}. First we give a few definitions, which we wil need later to define the classical groups.

\begin{defi}[Forms]
	Let $V$ be an $\F$-vector space. 
	\begin{enumerate}
		\item We call a map $\beta : V\times V \rightarrow \F, (v,w) \mapsto \beta(v,w)$ a \textbf{bilinear form} if $\beta$ is linear in each component, that is if for all $u,v,w \in V$ and $a,b \in \F$:
		\begin{itemize}
			\item $\beta(au+bv,w)=a\beta(u,w) + b\beta(v,w)$ and
			\item $\beta(u,av+bw)=a\beta(u,v) + b\beta(u,w)$.
		\end{itemize}
		\item We call a map $Q: V \rightarrow \F$ a \textbf{quadratic form} if there is an associated bilinear form $\beta_Q$ such that for all $v,w \in V$ and $a \in \F$:
		\begin{itemize}
			\item $Q(av) = a^2Q(v)$ and
			\item $\beta_Q(v,w) = Q(v+w) - Q(v) - Q(w)$.
		\end{itemize}
		$\beta_Q$ is called the \textbf{polar form of $Q$}. 
		\item If $\F$ is a field admitting a field automorphism $\overline{(\cdot)}$ of order 2 we call a map $\eta: V\times V \rightarrow \F, (v,w) \mapsto \eta(v,w)$ a \textbf{hermitian form} if for all $u,v,w \in V$ and $a,b \in \F$:
		\begin{itemize}
			\item $\eta(au+bv, w) = a\eta(u,w) + b\eta(v,w)$ and 
			\item $\eta(v,w) = \overline{\eta(w,v)}$.
		\end{itemize}
	\end{enumerate}
	From now on we will refer to a bilinear form or hermitian form only as a \textbf{form}. 
\end{defi}

We define the following properties of forms: 

\begin{defi}
	Let $V$ be an $\F$-vectorspace equipped with either a bilinear form $\beta$ or a quadratic form $Q$. 
	\begin{enumerate}
		\item $\beta$ is \textbf{non-degenerate} if for every $v \in V\backslash \{0\}$ there exists a $w \in V$ such that $\beta(v,w) \neq 0$. Otherwise, it is \textbf{degenerate}. 
		
		$Q$ is \textbf{non-degenerate} if the polar form of $Q$ is non-degenerate. Otherwise $Q$ is \textbf{degenerate}. 
		\item $\beta$ is \textbf{symmetric} if $\beta(v,w) = \beta(w,v)$ for all $v,w \in V$. 
		\item $\beta$ is \textbf{skew-symmetric} if $\beta(v,w) = -\beta(w,v)$ for all $v,w \in V$.
		\item $\beta$ is \textbf{alternating} if $\beta(v,v) = 0$ for all $v \in V$. 
		\item A non-degenerate and alternating bilinear form is \textbf{symplectic}.
		\item A non-degenerate and Hermitian form is a \textbf{unitary form}. 
		\item If $\mathrm{Char}(\F) \neq 2$, then a non-degenerate and symmetric bilinear form is \textbf{orthogonal}.
	\end{enumerate}
\end{defi}

We define a special type of endomorphism, which is preserved by forms. 

\begin{defi}
	Let $V$ be an $n$-dimensional $\F$-vector space equipped with a form $\beta$ or a quadratic form $Q$. A map $X \in \GL{n}{\F}$ is an \textbf{isometry} of $\beta$ if $\beta(v^X,w^X) = \beta(v,w)$ for all $v \in V$ or $Q(v^X) = Q(v)$ for all $v \in V$. The set of isometries of $\beta$ is a subgroup of $\GL{n}{\F}$ which we will denote by $\mathcal{C}(\beta)$. 
\end{defi}

We can now define classical groups based on forms and isometries:

\begin{defi}
	Let $V$ be an $n$-dimensional $\F$ vector space equipped with a non-degenerate form $\beta$ or a non-degenerate quadratic form $Q$. 
	\begin{enumerate}
		\item The group of linear isomorphisms of $V$ is the \textbf{linear group} and it is denoted by $\GL{n}{\F}$. 
		
		The \textbf{special linear group} $\SL{n}{\F}$ is the subset of all elements of $\GL{n}{\F}$ having determinant $1$. 
		\item If $\beta$ is symplectic, $\mathcal{C}(\beta)$ is the \textbf{symplectic group} and denoted by $\Sp{n}{\F}$. 
		\item If $\beta$ is unitary, $\mathcal{C}(\beta)$ is the \textbf{unitary group} denoted by $\mathrm{U}(n,\F)$.
		
		The \textbf{special unitary group} $\SU{n}{\F}$ is the subset of all elements of $\mathrm{U}(n,\F)$ having determinant $1$. 
		
		\item $\mathcal{C}(Q)$ is the \textbf{orthogonal group} and is denoted by $\mathrm{O}(n,\F)$.
		
		The \textbf{special orthogonal group} $\SO{n}{\F}$ is the subset of all elements of $\mathrm{O}(n,\F)$ having determinant $1$. 
		
		The \textbf{Omega group} is the derived subgroup of $\SO{n}{\F}$ and we denote it by $\Omega(n,\F)$.
	\end{enumerate}
\end{defi}

We will consider three different cases for the orthogonal groups, with the Witt index as the distinguishing feature. To define the Witt index, we present a few additional properties of forms. 

\begin{defi}
	Let $V$ be a vector space over $\F$ and let $\beta$ be a hermitian form on $V$. Then:
	\begin{enumerate}
		\item We call $v \in V\backslash\{0\}$ \textbf{isotropic} if $\beta(v,v) = 0$ and a subspace $W \leq V$ is \textbf{totally isotropic} if $W \subset W^{\perp} = \{v \in V\mid\beta(v,w) = 0 \text{ for all } w \in W\}$. 
		\item We call $v \in V\backslash\{0\}$ \textbf{singular} if $Q(v) = 0$ and a subspace $W \leq V$ \textbf{totally singular} if $Q(u) = 0$ for alle $u \in W$. 
		\item A subspace $W \leq V$ is \textbf{non-degenerate} if $W \cap W^{\perp} = \{0\}$.
		\item If $V = U \oplus W$ and $\beta(u,w) = 0$ for all $u\in U$ and $w \in W$, we write $U \perp W$ and say that $V$ is the \textbf{orthogonal direct sum of $U$ and $W$}. 
	\end{enumerate}
\end{defi}

\begin{defi}
	Let $V$ be an $\F$-vector space with a non-degenerate symplectic or unitary form $\beta$ or a non-degenerate quadratic form $Q$ in which case $\beta$ denotes the polar form of $Q$. We call a pair of vectors $(v,w) \in V^2$ a \textbf{hyperbolic pair} if $v,w$ are both isotropic and $\dim(\matspan{v,w})=2$. The subspace $\matspan{v,w}$ spanned by a hyperbolic pair $(v,w)$ is called a \textbf{hyperbolic plane}. 
\end{defi}

\begin{defi}
	Let $V$ be an $\F$-vector space with a non-degenerate symplectic or unitary form $\beta$ or a quadratic form $Q$ in which case $\beta$ is the polar form of $Q$. The dimension of a maximally totally isotropic subspace of $V$ or, in case of a quadratic form, a maximally totally singular subspace of $V$, is called the \textbf{Witt index} of $(V,\beta)$ or $(V,Q)$.  
\end{defi}

\begin{theorem}
	Let $\beta$ be a symplectic bilinear form on an $\F$-vector space $V$ with $\dim(V) \geq 2$. Then $V$ admits a basis $\mathcal{B} = (e_1,\dots,e_m,f_m,\dots,f_1)$ where $\erz{e_i, f_i}$ are hyperbolic planes for all $i \in \{1, \dots, m\}$ such that 
	$$V = \erz{e_1, f_1}  \perp \dots \perp \erz{e_m, f_m}$$
	and $V$ has Witt index $m$. 
\end{theorem}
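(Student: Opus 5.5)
We prove the statement by induction on $\dim(V)$, splitting off one hyperbolic plane at a time. Note first that a symplectic form is alternating, hence skew-symmetric: expanding $0=\beta(v+w,v+w)$ gives $\beta(v,w)=-\beta(w,v)$. In particular every vector is isotropic, so the content of the hyperbolic-plane condition is that $\dim\erz{e_i,f_i}=2$ together with the normalisation $\beta(e_i,f_i)=1$.

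For the base step, pick any $e_1\neq 0$. Non-degeneracy gives some $w$ with $\beta(e_1,w)\neq 0$, and we set $f_1:=\beta(e_1,w)^{-1}w$, so that $\beta(e_1,f_1)=1$ and $\beta(f_1,e_1)=-1$. Since $\beta(e_1,e_1)=0$, the vector $f_1$ is not a multiple of $e_1$, so $H_1:=\erz{e_1,f_1}$ is two-dimensional. The Gram matrix of $\beta|_{H_1}$ is $\begin{pmatrix}0&1\\-1&0\end{pmatrix}$, which is invertible, so $H_1$ is non-degenerate.

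The key step is showing $V=H_1\perp H_1^{\perp}$ with $\beta|_{H_1^\perp}$ again symplectic. The intersection $H_1\cap H_1^\perp$ is trivial because $H_1$ is non-degenerate. For $v\in V$ we write
$$v=\bigl(\beta(v,f_1)e_1-\beta(v,e_1)f_1\bigr)+v',$$
and check directly that $v'\in H_1^\perp$. This gives $V=H_1\oplus H_1^\perp$. The restriction to $H_1^\perp$ is alternating. It is also non-degenerate: if $u\in H_1^\perp$ were orthogonal to all of $H_1^\perp$, it would be orthogonal to all of $V$, so $u=0$. Since $\dim V$ drops by $2$, induction applies, yielding hyperbolic planes $\erz{e_2,f_2},\dots,\erz{e_m,f_m}$ and hence the orthogonal decomposition. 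A $1$-dimensional space cannot carry a non-degenerate alternating form, so the induction ends at $\{0\}$. In particular $\dim V=2m$ is even, which also handles the requirement $\dim V\ge 2$. The basis is obtained by listing the vectors in the order $(e_1,\dots,e_m,f_m,\dots,f_1)$.

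The Witt index is where I expect the main obstacle. Since every vector is isotropic, "totally isotropic" has to be read as $\beta|_W=0$. The subspace $\erz{e_1,\dots,e_m}$ is totally isotropic of dimension $m$, so the Witt index is at least $m$. For the upper bound, let $W$ be totally isotropic. Then $W\subseteq W^\perp$. By non-degeneracy, the map $V\to V^*$, $v\mapsto\beta(v,\cdot)$, is an isomorphism, so $\dim W^\perp=2m-\dim W$. Combining these gives $\dim W\le m$. Hence every maximal totally isotropic subspace has dimension exactly $m$, provided one checks that maximal subspaces (not just maximum-dimensional ones) attain $m$. If needed, this follows from Witt's extension argument; alternatively, one can interpret the definition as the maximum dimension, which is what the argument above establishes.
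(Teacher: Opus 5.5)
The paper gives no proof of this theorem. It is quoted without proof, following \cite{Gro01}, together with the other structure theorems of that chapter, so there is no argument in the text to compare your route against. Your argument is the standard induction and it is correct.
\begin{itemize}
\item With your sign conventions, $v'=v-\beta(v,f_1)e_1+\beta(v,e_1)f_1$ does satisfy $\beta(v',e_1)=\beta(v',f_1)=0$.
\item Non-degeneracy of $\beta|_{H_1^\perp}$ follows exactly as you say.
\item The parity argument correctly rules out a $1$-dimensional remainder.
\item The count $\dim W^\perp=2m-\dim W$ gives the upper bound on the Witt index.
\end{itemize}

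Two small remarks. First, the paper's definition of a hyperbolic pair only asks for two isotropic, linearly independent vectors; it does not require the normalisation $\beta(e,f)=1$. For an alternating form any independent pair qualifies. The real content of the statement is therefore the orthogonal splitting, which forces each plane to be non-degenerate, and that is exactly what you construct. Your normalisation is a harmless strengthening.

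Second, you do not need Witt's extension theorem to get from the maximum dimension to every maximal subspace. Let $W$ be totally isotropic with $\dim W<m$. Then $\dim W^\perp=2m-\dim W>\dim W$, so there is some $v\in W^\perp\setminus W$. Since $\beta(v,v)=0$ by alternation, and $\beta(v,W)=\beta(W,v)=0$ by skew-symmetry, the space $W+\erz{v}$ is still totally isotropic. Hence no totally isotropic subspace of dimension less than $m$ is maximal. So every maximal totally isotropic subspace has dimension exactly $m$, which removes the hedge at the end of your proof.
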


\begin{theorem}
	Let $\beta$ be a unitary form on an $n$-dimensional $\F$-vector space $V$ with $n \geq 2$. There exist vectors $e_1,\dots,e_m,f_1,\dots,f_m \in V$ such that
	$$V = \erz{e_1,f_1} \perp \dots \perp \erz{e_m,f_m} \perp V_0$$ 
	where $\left\langle e_i,f_i \right\rangle$ are hyperbolic planes for all $1 \leq i \leq m$ and $V_0 \leq V$ is anisotropic, such that $V$ has Witt index $m$. One of the following two cases holds:
	\begin{enumerate}
		\item $\dim(V_0) = 0$ and $\dim(V) = 2m$ or 
		\item $\dim(V_0) = 1$ and $\dim(V) = 2m+1$. For $V_0 = \erz{w}$ we can assume $\beta(w,w)=1$.  
	\end{enumerate}
\end{theorem}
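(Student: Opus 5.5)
The plan is induction on $\dim(V)$, using Witt-type hyperbolic splitting for unitary forms; the extra work compared with the symplectic case is that $\beta(v,v)$ need not vanish.
Fix the order-two automorphism $a\mapsto\overline{a}=a^{q_0}$ of $\F=\F_{q_0^2}$, and let $\F_0=\F_{q_0}$ be its fixed field.
Two standard facts about finite fields will be used throughout:
\begin{itemize}
\item the norm $N(a)=a\overline{a}$ maps $\F^\times$ onto $\F_0^\times$;
\item the trace $T(a)=a+\overline{a}$ maps $\F$ onto $\F_0$.
\end{itemize}
Note also that $\beta(v,v)\in\F_0$ for every $v$, because $\beta(v,v)=\overline{\beta(v,v)}$.

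The first step is to show that a non-degenerate unitary space of dimension $n\ge 2$ contains an isotropic vector $v\ne 0$.
Take any $2$-dimensional non-degenerate subspace $U$; one exists by choosing $x$ with $\beta(x,x)\neq0$ and then a suitable $y$. If $U$ already contains an isotropic vector we are done.
Otherwise, replace $y$ by $y-\frac{\beta(y,x)}{\beta(x,x)}x$ to get an orthogonal basis $x,y$ of $U$. The vector $x+ty$ is isotropic if and only if
\[\beta(x,x)+N(t)\,\beta(y,y)=0.\]
Since both Gram values are nonzero elements of $\F_0$ and $N$ is surjective onto $\F_0^\times$, such a $t$ exists. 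This is the step I expect to be the main obstacle, because it is the only place where finiteness of $\F$ is genuinely used.

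Given an isotropic $v$, non-degeneracy gives $w$ with $\beta(v,w)=1$ after rescaling.
Replace $w$ by $w'=w+cv$. Then
\[\beta(w',w')=\beta(w,w)+\overline{c}+c,\]
so surjectivity of the trace lets us choose $c$ with $\beta(w',w')=0$.
The pair $(e_1,f_1)=(v,w')$ is then a hyperbolic pair, and $H=\erz{e_1,f_1}$ is non-degenerate because its Gram matrix is $\begin{pmatrix}0&1\\1&0\end{pmatrix}$.
The standard projection argument, $x\mapsto x-\beta(x,f_1)e_1-\beta(x,e_1)f_1$ up to the appropriate conjugations, gives $V=H\perp H^\perp$, and $H^\perp$ is again non-degenerate.

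Apply induction to $H^\perp$. The process stops when the remaining space $V_0$ has dimension $0$ or $1$, since by the first step any non-degenerate space of dimension at least $2$ still contains an isotropic vector.
If $\dim V_0=1$, write $V_0=\erz{w}$. Then $\beta(w,w)\in\F_0^\times$, and by surjectivity of the norm we can rescale $w$ so that $\beta(w,w)=1$. This also shows $V_0$ is anisotropic.

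It remains to see that the Witt index equals $m$.
The subspace $\erz{e_1,\dots,e_m}$ is totally isotropic, so the Witt index is at least $m$.
Conversely, a totally isotropic $W$ satisfies $W\subseteq W^\perp$, and $\dim W^\perp=n-\dim W$ by non-degeneracy. Hence $2\dim W\le n$, so $\dim W\le\lfloor n/2\rfloor=m$.
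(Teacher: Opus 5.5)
The paper gives no proof of this theorem. It is background material in the chapter on classical groups, which is explicitly summarised without proof. Your argument is the standard Witt-decomposition proof: find an isotropic vector, complete it to a hyperbolic pair, split off the hyperbolic plane, and recurse. It is correct. Compared with the paper's bare citation, it is self-contained and pins down exactly which finite-field facts are used.

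Finiteness of $\F$ is essential here and only implicit in the paper: over $\mathbb{C}$, a positive definite Hermitian form is anisotropic in every dimension. Your remark that Step~1 is the only place finiteness enters is slightly off. Surjectivity of the norm is also what gives the normalisation $\beta(w,w)=1$; over $\mathbb{C}/\mathbb{R}$ one only reaches $\pm 1$. Surjectivity of the trace, by contrast, holds for any separable quadratic extension.

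A few points of polish:
\begin{itemize}
\item In Step~1 you may take an arbitrary $2$-dimensional subspace $U$. If $U$ is anisotropic, every nonzero $x\in U$ has $\beta(x,x)\neq 0$ and your orthogonalisation goes through. So you need to justify neither the existence of some $x\in V$ with $\beta(x,x)\neq0$ nor non-degeneracy of $U$.
\item The projection $x\mapsto x-\beta(x,f_1)e_1-\beta(x,e_1)f_1$ is already correct as written, with no conjugations, because $\beta$ is linear in the first argument.
\item Your bound shows that $m$ is the \emph{maximum} dimension of a totally isotropic subspace. If the paper's \enquote{maximally totally isotropic} is read as inclusion-maximal, one more line is needed. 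Take a totally isotropic $W$ with $\dim W<m$. The radical of $\beta$ on $W^\perp$ is $W^\perp\cap W^{\perp\perp}=W$, so $W^\perp/W$ carries a non-degenerate unitary form of dimension $n-2\dim W\geq 2$. By your Step~1 it has an isotropic vector $x+W$, and then $W+\erz{x}$ is a strictly larger totally isotropic subspace. This makes the Witt index well defined without appealing to Witt's extension theorem.
\end{itemize}
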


\begin{theorem}\label{orthogonaldecomp}
	Let $\mathrm{Char}(\F)$ be odd and let $\beta$ be an orthogonal form on an $\F$-vector space $V$. Then there exist vectors $e_1,\dots,e_m,f_1,\dots,f_m \in V$ such that 
	$$V = \erz{e_1,f_1} \perp \dots \erz{e_m,f_m} \perp V_0$$
	where $\erz{e_i,f_i}$ is a hyperbolic plane and $V_0 \leq V$ is anisotropic, such that $V$ has Witt index $m$ and exactly one of the following holds: 
	\begin{enumerate}
		\item $\dim(V_0) = 0$ and $\dim(V)=2m$.
		\item $\dim(V_0) = 1$ and $\dim(V)=2m+1$. For $V_0 = \erz{w}$ we can assume that \mbox{$\beta(w,w) = -2^{-1}$}. 
		\item $\dim(V_0) = 2$ and $\dim(V)=2m+2$. For $V_0 = \erz{w_1,w_2}$ we can assume that $\beta(w_1,w_1) = -2, \beta(w_1,w_2)= 0$ and $\beta(w_2,w_2)=2\omega$ where $\omega$ is a primitive element of $\F$.  
	\end{enumerate} 
\end{theorem}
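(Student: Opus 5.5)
We argue by induction on $\dim V$, splitting off a hyperbolic plane at each step; the anisotropic remainder is then normalised using the structure of quadratic forms over the finite field $\F=\F_q$, $q$ odd. Since $\mathrm{Char}(\F)\neq 2$, the form $Q(v):=\tfrac12\beta(v,v)$ recovers $\beta$. Hence isotropic and singular vectors coincide, and we may work with $\beta$ alone.

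\emph{Step 1: splitting off hyperbolic planes.} Suppose $V$ contains an isotropic vector $e\neq 0$. Non-degeneracy gives some $u$ with $\beta(e,u)\neq 0$; after scaling we may take $\beta(e,u)=1$. Put $f:=u-\tfrac12\beta(u,u)e$. Then $\beta(f,f)=0$ and $\beta(e,f)=1$, so $H=\erz{e,f}$ is a hyperbolic plane. The Gram matrix of $H$ is $\begin{pmatrix}0&1\\1&0\end{pmatrix}$, which is invertible, so $H$ is non-degenerate and $V=H\perp H^{\perp}$. The restriction of $\beta$ to $H^\perp$ is again orthogonal, and $\dim H^\perp=\dim V-2$. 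Iterating this, we obtain
$$V=\erz{e_1,f_1}\perp\dots\perp\erz{e_m,f_m}\perp V_0$$
with $V_0$ anisotropic. By Witt's cancellation theorem (or a direct count of maximal totally singular subspaces), the integer $m$ equals the Witt index.

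\emph{Step 2: bounding $\dim V_0$.} Over $\F_q$, every non-degenerate quadratic form of dimension at least $3$ is isotropic. This follows from Chevalley--Warning, or more elementarily: for any nonzero $a,b$ and any $c$, the equation $ax^2+by^2=c$ is solvable, since $\{ax^2\}$ and $\{c-by^2\}$ each contain $(q+1)/2$ elements and therefore intersect. Diagonalising $V_0$ (possible because the characteristic is odd) and applying this to three diagonal entries yields a nonzero isotropic vector whenever $\dim V_0\geq 3$. Hence $\dim V_0\in\{0,1,2\}$, which gives the three cases and shows that exactly one of them holds.

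\emph{Step 3: normalising $V_0$.} This is the main obstacle, because it needs the classification of forms by their discriminant. If $\dim V_0=1$, say $V_0=\erz{w}$, we may replace $w$ by $aw$, which multiplies $\beta(w,w)$ by the square $a^2$. So we must show that $\beta(w,w)$ can be taken in the square class of $-2^{-1}$. For this we use the freedom in Step 1. Any vector $w'$ spanning a complement of a maximal hyperbolic part may serve, and $e_1+cf_1+w$ has length $2c+\beta(w,w)$. Combined with the two-dimensional solvability from Step 2, this lets us pass between the square classes: the form $\langle \beta(w,w)\rangle\perp H$ represents every nonzero value. Concretely, we pick $v$ with $\beta(v,v)=-2^{-1}$ inside the $3$-dimensional space $\erz{e_m,f_m}\perp V_0$ and take $v^\perp$ there as the new hyperbolic plane. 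This step requires $m\geq 1$; if $m=0$ the claim is a normalisation statement up to similarity of forms, and we treat that case separately.

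If $\dim V_0=2$, anisotropy forces $-\det$ of the Gram matrix to be a nonsquare. Diagonalise as $\langle a,b\rangle$ and use Step 2 to represent $-2$ by $V_0$; this is possible because a $2$-dimensional non-degenerate form over $\F_q$ represents every nonzero element. Choose $w_1$ with $\beta(w_1,w_1)=-2$ and let $w_2\in w_1^\perp\cap V_0$. Then $\beta(w_2,w_2)$ lies in the square class of $2\omega$, since the discriminant of $V_0$ is fixed as the nonsquare class. Rescaling $w_2$ makes $\beta(w_2,w_2)=2\omega$ exactly.
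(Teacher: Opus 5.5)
The paper quotes this theorem without proof, so your argument has to stand on its own. Steps 1 and 2 are correct for $\F=\F_q$; this is where finiteness of $\F$, implicit in the paper, is used. Your treatment of $\dim V_0=2$ is also correct: $V_0$ represents $-2$, and $\mathrm{disc}(V_0)\equiv-\omega$ then forces $\beta(w_2,w_2)$ into the square class of $2\omega$.

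The gap is the case $\dim V_0=1$, and it cannot be closed the way you set it up.

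\begin{itemize}
\item[(a)] \emph{The square class is fixed.} The Gram matrix of $\langle e_1,f_1\rangle\perp\dots\perp\langle e_m,f_m\rangle\perp\langle w\rangle$ has determinant $(-1)^m\beta(w,w)$. This determinant modulo $(\F^\times)^2$ is an isometry invariant of $\beta$, and $m$ is the Witt index. Hence the square class of $\beta(w,w)$ is the same for \emph{every} such decomposition, for every $m$ and not only for $m=0$.

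\item[(b)] \emph{Where your step fails.} A vector $v\in U=\langle e_m,f_m\rangle\perp\langle w\rangle$ with $\beta(v,v)=-2^{-1}$ does exist. But $v^\perp\cap U$ has discriminant $\mathrm{disc}(U)/\beta(v,v)\equiv 2\beta(w,w)$. It is therefore a hyperbolic plane only if $-2\beta(w,w)$ is a square, i.e.\ only if $\beta(w,w)$ already lay in the class of $-2^{-1}$.

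\item[(c)] \emph{Counterexample.} Over $\F_3$ we have $-2^{-1}=1$. Take $\beta(w,w)=2$. Then $v=e+2f$ satisfies $\beta(v,v)=1$, yet $v^\perp\cap U=\langle e+f,w\rangle$ has Gram matrix $\mathrm{diag}(2,2)$ and is anisotropic.
\end{itemize}

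So item 2 is false for a fixed $\beta$ whenever $-2\beta(w,w)$ is a nonsquare. It holds only up to similarity, which is all that is needed to define $\mathrm{O}^{\circ}(n,\F)$. The fix is short. If $-2\beta(w,w)$ is a nonsquare, replace $\beta$ by $\omega\beta$ and $f_i$ by $\omega^{-1}f_i$; this changes neither the subspaces in the decomposition nor $\mathcal{C}(\beta)$. Then rescale $w$. You should state and prove this weaker form, and the same rescaling also settles the case $m=0$ that you left untreated.
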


Based on this result, we emphasise for the orthogonal groups these distinctions in the naming convention. 

\begin{defi}
	Let $\mathrm{Char}(\F)$ be odd and $\beta$ be an orthogonal form on an $n$-dimensional $\F$-vector space $V$. Let 
	$$V = \matspan{v_1,w_1} \perp \dots \perp \matspan{v_m,w_m} \perp V_0$$ 
	as in \ref{orthogonaldecomp}. We define the following types of orthogonal groups:
	\begin{enumerate}
		\item If $\dim{V_0} = 0$ then we call $\beta$ hyperbolic and $\mathcal{C}(\beta)$ the \textbf{orthogonal group of plus type}, denoting it by $\mathrm{O}^+(n,\F)$. The \textbf{special orthogonal group of plus type} $\SO{+}{n}{\F}$ is the subset of all elements of $\mathrm{O}^+(n,\F)$ having determinant $1$. 
		
		\item If $\dim{V_0} = 1$ then we call $\beta$ parabolic and $\mathcal{C}(\beta)$ the \textbf{orthogonal group of circle type}, denoting it by $\mathrm{O}^{\circ}(n,\F)$. The \textbf{special orthogonal group of circle type} $\SO{\circ}{n}{\F}$ is the subset of all elements of $\mathrm{O}^{\circ}(n,\F)$ having determinant $1$. 
		
		\item If $\dim{V_0} = 2$ then we call $\beta$ elliptic and $\mathcal{C}(\beta)$ the \textbf{orthogonal group of minus type}, denoting it by $\mathrm{O}^-(n,\F)$. The \textbf{special orthogonal group of minus type} $\SO{-}{n}{\F}$ is the subset of all elements of $\mathrm{O}^-(n,\F)$ having determinant $1$. 
	\end{enumerate}
\end{defi}

\section{The linear groups}
From here on let $q$ be a prime power and let $\F = \F_q$ be the field with $q$ elements.

We have already shown in \cref{JNFequiv} that two matrices $A,B \in \GL{n}{\F}$ are conjugate in $\GL{n}{\F}$ if and only if they have the same Jordan normal form. So given a conjugacy class of $\GL{n}{\F}$, we may fix it as the representative. 

\begin{theorem}[{Conjugacy in $\SL{n}{\F}$}]\label{slconj}
Let $A$ in $\SL{n}{\F}$ and $d:=\gcd(\lambda_1,\dots,\lambda_k,q-1)$ where $\lambda_i$ denotes the dimension of the $i$-th Jordan block of $A$ for $1 \leq i \leq k$. Furthermore let $\omega$ be a primitive element of $\F$ and $D \in \GL{n}{\F}$ with determinant $\omega$. 

Then the conjugacy class of $A$ in $\GL{n}{\F}$ splits into $d$ distinct classes in $\SL{n}{\F}$ with representatives $A, A^D, A^{D^2}, \dots, A^{D^{d-1}}$. Moreover for $C_1,C_2 \in \GL{n}{\F}$, $A^{C_1}$ and $A^{C_2}$ are conjugate in $\SL{n}{\F_1}$ if, and only if, $\det(C_1^{-1}C_2)$ is a power of $\omega^d$. 
\end{theorem}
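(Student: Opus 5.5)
The approach is to compute the image of the determinant on the centralizer. Throughout, I read $\lambda_i$ as the \emph{length} of the $i$-th Jordan block: a block $J(p^m)$ has length $m$, in the sense of \cref{defilength}. If $\lambda_i$ literally meant the dimension $m\deg(p)$, the statement would fail. For example, take $A=M_p$ with $p$ irreducible of degree $2$ and $\det(A)=1$ over a field with $q$ odd. Its centralizer is $\F_{q^2}^*$, and the determinant restricted to it is the norm, which is surjective. So the class does not split, yet $\gcd(2,q-1)=2$.

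The first step is the standard orbit–stabiliser reduction. $\GL{n}{\F}$ acts transitively on the class $A^{\GL{n}{\F}}$, and $\SL{n}{\F}$ is normal with abelian quotient $\F^*$ via $\det$. I would show that $A^{C_1}$ and $A^{C_2}$ are $\SL{n}{\F}$-conjugate if and only if $C_1^{-1}C_2\in C_{\GL{n}{\F}}(A)\cdot\SL{n}{\F}$. This holds if and only if $\det(C_1^{-1}C_2)\in\det(C_{\GL{n}{\F}}(A))$. Hence the $\SL{n}{\F}$-classes inside the $\GL{n}{\F}$-class are in bijection with $\F^*/\det(C_{\GL{n}{\F}}(A))$. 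Since $\F^*=\langle\omega\rangle$ is cyclic, everything reduces to proving
\[\det(C_{\GL{n}{\F}}(A))=\langle\omega^{d}\rangle .\]
Granting this, the $d$ cosets are represented by $1,\omega,\dots,\omega^{d-1}$, so $A,A^D,\dots,A^{D^{d-1}}$ are representatives, and the criterion ``$\det(C_1^{-1}C_2)$ is a power of $\omega^d$'' follows.

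For the inclusion $\supseteq$, I would work in the Jordan basis, where by \cref{directsumlemma} and \cref{cyclicdecomposition} the space splits as $V=\bigoplus_i\matspan{v_i}_A$. Fix a block of length $m=\lambda_i$ with $\mu_{A,v_i}=p^m$ and $e=\deg(p)$, and take a polynomial $g$ coprime to $p$. The map that is $g(A)$ on $\matspan{v_i}_A$ and the identity on the other summands commutes with $A$. Its determinant equals $N_{\F_{q^e}/\F}(g(\alpha))^{m}$, where $\alpha$ is a root of $p$. I would prove this by filtering $\matspan{v_i}_A$ by the chain $\matspan{v_i}_Ap(A)^j$ of \cref{CyclicEigenspaceSubspaces}: each of the $m$ quotients is a copy of $\F[X]/(p)\cong\F_{q^e}$, on which $g(A)$ acts as multiplication by $g(\alpha)$. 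Because the norm is surjective onto $\F^*$, these elements realise all of $(\F^*)^{\lambda_i}$. Multiplying over the blocks shows the image contains $\langle\omega^{\lambda_i}:i\rangle=\langle\omega^{\gcd(\lambda_1,\dots,\lambda_k,q-1)}\rangle$.

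The inclusion $\subseteq$ is the main obstacle. By \cref{primarydecomp}, any $g$ commuting with $A$ preserves each primary component, so I may assume $\mu_A=p^m$. Let $W_j:=Vp(A)^j\cap\kernel(p(A))$ for $j\ge 0$. These spaces are $g$-invariant and form a decreasing chain. The quotient $W_j/W_{j+1}$ is an $\F_{q^e}$-space whose dimension is the number $r_{j+1}$ of blocks of length exactly $j+1$. Let $\delta_t\in\F_{q^e}^*$ be the $\F_{q^e}$-determinant of $g$ on $W_{t-1}/W_t$. I expect the main difficulty to lie in the following computation. Filter $V$ by the $g$-invariant subspaces $\kernel(p(A)^s)\cap Vp(A)^j$. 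Each graded piece is identified $\F_{q^e}$-linearly and $g$-equivariantly, via powers of $p(A)$, with some $W_{t-1}/W_t$. A block of length $t$ contributes exactly $t$ such pieces. This should give
\[\det(g)=\prod_t N_{\F_{q^e}/\F}(\delta_t)^{\,t\cdot e'}\]
with the exponent in $(F^*)^{t}$, so $\det(g)$ lies in the subgroup generated by the $t$-th powers for the lengths $t$ that occur. That subgroup is $\langle\omega^d\rangle$. I would check the bookkeeping of this filtration carefully on the two-block example $J(p^2)\oplus J(p)$ before writing the general case.
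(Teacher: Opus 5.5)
The paper gives no proof of this theorem. Its last chapter explicitly summarises results of De Franceschi without proof, so there is no in-paper argument to compare against. Your proposal is correct and follows the standard route of reducing everything to $\det\bigl(C_{\GL{n}{\F}}(A)\bigr)=\langle\omega^d\rangle$.

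Reading $\lambda_i$ as the length $m$ of $J(p^m)$ rather than its dimension $m\deg(p)$ is necessary. The length is the size of the corresponding Jordan blocks over $\overline{\F}$. Your $M_p$ example is a genuine counterexample to the statement as printed: $C_{\GL{2}{\F}}(M_p)=\F[M_p]^{*}\cong\F_{q^2}^{*}$, and $\det$ restricts to the surjective norm. This reading also matches the paper's own unitary result, which is phrased with the exponents $m_i$ of the elementary divisors. In the reduction step, note that $C_{\GL{n}{\F}}(A)\SL{n}{\F}=\det^{-1}\bigl(\det C_{\GL{n}{\F}}(A)\bigr)$ is normal. So it does not matter that the direct computation yields $C_{\GL{n}{\F}}(A^{C_1})\SL{n}{\F}$.

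What remains to be written out is the inclusion $\subseteq$, and two things are off there as stated.

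First, the subspaces $\kernel(p(A)^s)\cap Vp(A)^j$ are not totally ordered, so they do not form a filtration. Use instead the chain
\[F_{j,s}:=\bigl(Vp(A)^j\cap\kernel(p(A)^s)\bigr)+Vp(A)^{j+1},\qquad 0\le j<m,\ 0\le s\le m,\]
ordered by decreasing $j$ and, for fixed $j$, increasing $s$. Here $F_{j,0}=Vp(A)^{j+1}$ and $F_{j,m}=Vp(A)^j$, and each $F_{j,s}$ is $g$-invariant.
\begin{itemize}
\item The map $x\mapsto xp(A)^{s-1}$ sends $Vp(A)^j\cap\kernel(p(A)^s)$ into $W_{j+s-1}$.
\item It kills $Vp(A)^j\cap\kernel(p(A)^{s-1})$ and sends $Vp(A)^{j+1}\cap\kernel(p(A)^s)$ into $W_{j+s}$.
\item Hence it induces a $g$-equivariant $\F_{q^e}$-linear map $F_{j,s}/F_{j,s-1}\to W_{j+s-1}/W_{j+s}$.
\item This map is surjective, because the vectors $v_ip(A)^j$ with $\lambda_i=j+s$ hit a complement of $W_{j+s}$.
\item Computing in a cyclic decomposition, both sides have $\F_{q^e}$-dimension $r_{j+s}$, so the map is an isomorphism.
\end{itemize}

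Second, exactly $t$ pairs $(j,s)$ satisfy $j+s=t$, and $\det_{\F}=N_{\F_{q^e}/\F}\circ\det_{\F_{q^e}}$ for $\F_{q^e}$-linear maps. This gives
\[\det(g)=\prod_t N_{\F_{q^e}/\F}(\delta_t)^{t},\]
with no extra factor $e'$ in the exponent. An extra factor $e=\deg(p)$ would reproduce exactly the dimension version that your counterexample refutes. With this, $\det(g)\in\langle\omega^{t}: r_t>0\rangle\subseteq\langle\omega^{d}\rangle$ on each primary component, and the proof is complete.
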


Let $d$ and $\omega$ be as described in \cref{slconj}.
Then given matrices $A, B \in \SL{n}{\F}$ and $X_A, X_B \in \GL{n}{\F}$ such that 
$$A^{X_A} = \JNF(A) = \JNF(B) = B^{X_B}$$ 
we get that $A = A^{I}$ and $B = A^{X_A\cdot X_B^{-1}}$ are conjugate in $\SL{n}{\F}$ if, and only if, $\det(I^{-1}X_AX_B^{-1}) = \det(X_AX_B^{-1}) = \omega^s$ for some $s \in \mathbb{N}_0$.  

\section{The finite classical groups}
In this section let $\epsilon\in \{+,-,\circ\}$, with $n$ odd if $\epsilon = \circ$ and even if $\epsilon \in \{+,-\}$ or $G = \Sp{n}{\F}$. Furthermore, let $G$ be a finite classical group.

We first look at the special case of semisimple elements, i.e. elements with minimal polynomial that is a product of distinct irreducible factors. 

\begin{theorem}[Conjugacy of semisimple elements]
	Let $A, B \in G$ be semisimple. 
	\begin{enumerate}
		\item If $G= \Sp{n}{\F}$ or $G = \U{n}{\F}$ then $A$ and $B$ are conjugate in $G$ if and only if their Jordan normal forms coincide. 
		\item For $G = \SU{n}{\F}$, the conjugacy class of $A \in G$ coincides with the conjugacy class of $A$ in $\U{n}{\F}$. So $A$ and $B$ are conjugate if, and only if, their Jordan normal forms coincide.
	\end{enumerate}
	\begin{enumerate}\setcounter{enumi}{2}
		\item Let $G = \mathrm{O}^{\epsilon}(n,\F)$. 
		\begin{itemize}
			\item If $\epsilon = \circ$, and $\beta$ a form inducing $G = C(\beta) = \mathrm{O}^{\circ}(n,\F)$, then $A$ and $B$ are conjugate in $G$ if, and only if, their Jordan normal forms coincide and the forms induced by $\beta$ on the eigenspaces of the eigenvalues $+1$ and $-1$ have the same type. 
			\item If $\epsilon\in \{+,-\}$, $A$ and $B$ are conjugate if and only if their Jordan normal forms coincide. 
		\end{itemize}
		\item Let $G = \SO{\epsilon}{n}{\F}$. 
		\begin{itemize}
			\item If $A^2 - 1$ is singular, i.e. $\det(A^2-1) = 0$, then $A$ and $B$ are conjugate in $G$ if and only if they are conjugate in $\mathrm{O}^{\epsilon}(n,\F)$. 
			\item Else, if $A^2 - 1$ is not singular, the conjugacy class of $A$ in $\mathrm{O}^{\epsilon}(n,\F)$ splits into two distinct classes with representatives $A$ and $A^D$ for $D \in \mathrm{O}^{\epsilon}(n,\F)$ with $\det(D) = -1$.
		\end{itemize}
		\item Let $G = \Omega^{\epsilon}(n,\F)$ with $n > 2$. 
		\begin{itemize}
			\item If $\epsilon = \circ$ or if $\epsilon \in \{+,-\}$ and $A+1$ is singular, $A$ and $B$ are conjugate in $G$ if and only if they are conjugate in $\SO{\circ}{n}{\F}$.
			\item If $\epsilon \in \{+,-\}$, and $A+1$ is not singular, the conjugacy class of $A$ in $\SO{\epsilon}{n}{\F}$ splits into two distinct classes with representatives $A$ and $A^D$ for $D \in \SO{\epsilon}{n}{\F}\backslash\Omega^{\epsilon}(n,\F)$.
		\end{itemize}
	\end{enumerate}
\end{theorem}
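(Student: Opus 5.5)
The plan is to reduce everything to a statement about forms on the primary subspaces of \cref{primarydecomp}, and then to read off the various splittings from determinants and spinor norms of centralisers. First, for $A \in G$ preserving a form $\beta$ (or $Q$), I would introduce the \emph{dual} $p^*$ of a monic irreducible $p$: in the bilinear case this is the normalised reciprocal $X^{\deg p}p(X^{-1})/p(0)$, and in the unitary case it is additionally twisted by the involution $\overline{(\cdot)}$. The adjoint of $A$ with respect to $\beta$ is $A^{-1}$. From this I would show that $\beta(\kernel p(A), \kernel f(A)) = 0$ unless $f = p^*$. Consequently $V$ is an orthogonal direct sum of non-degenerate $A$-invariant pieces of two kinds:
\begin{itemize}
\item $V_p$ with $p = p^*$ (\enquote{self-dual} factors);
\item $V_p \oplus V_{p^*}$ with $p \neq p^*$, where $V_p$ and $V_{p^*}$ are totally isotropic and in duality.
\end{itemize}
Since $A$ is semisimple, $\mu_A$ is squarefree. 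Each $V_p$ is therefore a vector space over $E := \F[X]/(p)$ on which $A$ acts as the scalar $X + (p)$, and by \cref{directsumlemmaspecial} $A|_{V_p} = I_c \otimes M_p$.

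Second, on each piece I would transport $\beta$ to an $E$-valued form via the trace $\mathrm{Tr}_{E/\F}$. For a self-dual $p$ of degree $> 1$, the map $X \mapsto X^{-1}$ induces an involution of $E$, and $\beta$ becomes a non-degenerate hermitian form over $E$. The centraliser of $A|_{V_p}$ in $\mathcal{C}(\beta|_{V_p})$ is then a unitary group $\U{c}{E}$. For a pair $p \neq p^*$, the centraliser is $\GL{c}{E}$ acting contragrediently on $V_{p^*}$. For $p = X \mp 1$, the eigenspace carries the restriction of $\beta$ itself, and the centraliser is the full classical group of the same kind on that eigenspace.

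Conjugacy in $\mathcal{C}(\beta)$ of two semisimple elements with the same $\JNF$ then reduces, piece by piece, to isometry of the induced forms. Hermitian forms over finite fields of a given dimension are all isometric, and so are the paired pieces, since any basis can be dualised. Symplectic forms of a given dimension are likewise unique. This yields part 1 for $\Sp{n}{\F}$ and $\U{n}{\F}$. For the orthogonal groups, the only obstruction is the type (plus or minus) of the quadratic form on the $\pm 1$ eigenspaces. In the plus and minus cases with $n$ even, the total type is fixed, and the discriminants of the other pieces are determined by the $\JNF$. This forces the types on the $+1$ and $-1$ eigenspaces to agree, giving the two bullets of part 3. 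Gluing the piecewise isometries by Witt's theorem produces the conjugating element.

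Third, for the subgroups $\SU{n}{\F}$, $\SO{\epsilon}{n}{\F}$ and $\Omega^{\epsilon}(n,\F)$, the standard principle applies: the class in the overgroup splits according to the index of (centraliser $\cdot$ subgroup) in the overgroup. So I would compute the image of the centraliser under $\det$ and under the spinor norm.
\begin{itemize}
\item For $\U{c}{E}$, the determinant composed with the norm $E \to \F$ is surjective onto the norm-one group, which gives part 2.
\item For $\SO{\epsilon}{n}{\F}$, a centraliser element of determinant $-1$ exists exactly when some $\pm 1$ eigenspace is non-zero, that is, when $\det(A^2 - 1) = 0$. This gives part 4.
\end{itemize}
The main obstacle I expect is part 5: computing spinor norms of centraliser elements, especially on the $\U{c}{E}$ and $\GL{c}{E}$ factors. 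My first attempt would be to write such elements as products of reflections on $\mathrm{Tr}$-twisted forms, and to show that they realise both spinor-norm classes precisely when the $-1$ eigenspace is non-zero, or automatically when $n$ is odd. This would give the dichotomy stated in part 5 with representatives $A$ and $A^D$.
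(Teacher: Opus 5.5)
The paper gives no proof of this theorem; it is quoted from De Franceschi, so there is nothing internal to compare your route with. Your framework is the standard Wall-type route: split $V$ orthogonally into self-dual primary pieces and dual pairs $V_p\oplus V_{p^*}$, write the centraliser as a product of classical groups on $V_{\pm1}$, groups $\mathrm{U}_c(E)$ and groups $\mathrm{GL}_c(E)$, and read off the splitting from $\det$ and the spinor norm on the centraliser. It does give parts 1, 2 and 4, and the first bullet of part 3 if \enquote{type} is read as isometry class.

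The genuine gap is the second bullet of part 3. From \enquote{the total type is fixed and the other pieces are determined by the $\JNF$} you conclude that the types on the $\pm1$-eigenspaces agree. But Witt cancellation only gives $V_1(A)\perp V_{-1}(A)\cong V_1(B)\perp V_{-1}(B)$, writing $V_{\pm1}(\cdot)$ for the eigenspaces. That is one relation between two unknown isometry classes. The step cannot be repaired, because the bullet is false. In an orthogonal space of type $\pm$, take non-singular $u,v$ with $Q(u)$ a square and $Q(v)$ a non-square. The reflections $r_u,r_v$ are semisimple with the same $\JNF$ $\mathrm{diag}(-1,1,\dots,1)$. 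Yet an isometry conjugating one to the other would map $\langle u\rangle$ isometrically onto $\langle v\rangle$, which is impossible. So for every $\epsilon$, not just $\epsilon=\circ$, the criterion must include the isometry class of the form on a $\pm1$-eigenspace.

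Part 5 you leave open, and the dichotomy you hope to extract is wrong. That dichotomy says both spinor-norm classes occur in $C_{\mathrm{SO}}(A)$ precisely when the $-1$-eigenspace is non-zero or $n$ is odd. In fact the factors you regard as the obstacle have surjective spinor norm. On a dual pair $W\oplus W'$ with $W$ totally singular, $\mathrm{GL}(W)$ acts with spinor norm $\det_W$ modulo squares. Likewise $\mathrm{SO}(V_1)$ realises both classes as soon as $\dim V_1\ge 2$.

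Consequently the second bullet of part 5 is also false as stated. Already $A=I$ refutes it: $A+1=2I$ is invertible, but the class of the central element $I$ is a singleton and cannot split. A less degenerate example: for $q\ge5$, let $A$ act as $\lambda\neq\pm1$ on a totally singular plane $W$ and as $\lambda^{-1}$ on a complement. Then $A\in\Omega^+(4,q)$, since $\det_W A=\lambda^2$, and $A+1$ is invertible. But $\Omega\,C_{\mathrm{SO}}(A)\supseteq\Omega\,\mathrm{GL}(W)=\mathrm{SO}$, so the $\mathrm{SO}$-class of $A$ is a single $\Omega$-class. Carried out correctly, your spinor-norm computation would refute this bullet rather than prove it.
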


We now consider the conjugacy of unipotent elements, i.e. elements $A$ such that there exists a $k \in \mathbb{N}$ with $(A-1)^k = 0$. These results have been shown in \cite{Fra25}, and we will summarise the most straight forward results here without proof.

\begin{theorem}[Conjugacy of unipotent elements]
	Let $A, B \in G$ be unipotent. 
	\begin{enumerate}
		\item If $G = \U{n}{\F}$, $A$ and $B$ are conjugate in $G$ if and only if their Jordan normal forms coincide. 
		\item For $G = \SU{n}{\F}$, $A$ and $B$ are conjugate in $G$ if and only if they are conjugate in $\SL{n}{\F}$. 
	\end{enumerate}
	Now let $q$ be odd. 
	\begin{enumerate}\setcounter{enumi}{2}
		\item Let $G = \Sp{n}{\F}$. If $r$ is the number of blocks of even dimension in $\mathrm{JNF}(A)$, the conjugacy class of $A$ in $\GL{n}{\F}$ splits into $2^r$ distinct conjugacy classes in $G$. 
		\item Let $G = O^{\epsilon}$ and let $s$ be the number of blocks of odd dimension in $\mathrm{JNF}(A)$. 
		\begin{itemize}
			\item If $s>0$, the conjugacy class of $A$ in $\GL{n}{\F}$ splits into $2^{s-1}$ distinct conjugacy classes in $G$.
			\item There are no elements in $G$ with $s=0$ for $\epsilon \in \{\circ, -\}$. For $\epsilon = +$ and $s=0$, $A$ and $B$ are conjugate if and only if their Jordan normal forms coincide.
		\end{itemize}
		\item Let $G = \SO{\epsilon}{n}{\F}$ and let $s$ be the number of blocks of odd dimension in $\mathrm{JNF}(A)$.
		\begin{itemize}
			\item If $s>0$, $A$ and $B$ are conjugate in $G$ if and only if they are conjugate in $\mathrm{O}(n,\F)$. 
			\item if $s=0$, the conjugacy class of $A$ in $\mathrm{O}(n,\F)$ splits into two distinct classes in $G$. 
		\end{itemize}
		\item Let $G = \Omega^{\epsilon}(n,\F)$. If the number of blocks of odd dimension in $\mathrm{JNF}(A)$ is equal to zero, the conjugacy class of $A$ in $ \SO{\epsilon}{n}{\F}$ splits into two distinct classes in $G$.
	\end{enumerate}
\end{theorem}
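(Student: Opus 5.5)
The approach I would take is the classical Springer--Steinberg/Wall parametrisation of unipotent classes in isometry groups, rephrased in the module language of the earlier chapters. Let $A \in G$ be unipotent and set $N := A - 1$, so that $\mu_A = (X-1)^m$ and $V$ is a primary $\F[X]$-module via $A$. By \cref{cyclicdecomposition} and \cref{uniquepart}, $V = \bigoplus_j \matspan{v_j}_A$, and the partition of $A$ is determined by $\JNF(A)$. For each block size $i$ I would define the \emph{multiplicity space}
$$W_i := \kernel(N^i) \big/ \bigl(\kernel(N^{i-1}) + \kernel(N^{i+1})N\bigr),$$
whose dimension is the number of Jordan blocks of size $i$. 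On $W_i$ I would put the form $\beta_i(\bar v,\bar w) := \beta(v, w N^{*\,i-1})$, where $N^*$ denotes the adjoint of $N$ with respect to $\beta$ (for unipotent isometries one computes $N^* = -NA^{-1}$). The first step is to check three facts: $\beta_i$ is well defined; it is non-degenerate; and it is $(-1)^{i-1}\varepsilon$-symmetric, where $\varepsilon = \pm 1$ is the symmetry type of $\beta$. In the unitary case $\beta_i$ is instead hermitian up to a fixed scalar.

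The central step is the \emph{classification theorem}: two unipotent elements $A, B \in \mathcal{C}(\beta)$ with the same partition are conjugate in $\mathcal{C}(\beta)$ if and only if $(W_i(A),\beta_i) \cong (W_i(B),\beta_i)$ for every $i$. For the forward direction, an isometry conjugating $A$ to $B$ induces isometries of the multiplicity spaces. For the converse I would construct an orthogonal decomposition $V = \perp_i U_i$ into $A$-invariant non-degenerate subspaces with $U_i \cong W_i \otimes J_i$, by lifting a basis of $W_i$ and correcting it inductively. This is a form-compatible version of \cref{directsumlemma} and \cref{cyclicdecomposition}, in which one subtracts vectors lying deeper in the $N$-filtration in order to kill unwanted inner products. \textbf{This lifting step is the main obstacle.} I would attempt it first for the largest block size $m$, where $\beta(v, wN^{*\,m-1})$ is already non-degenerate on a complement, and then split off $U_m$ and induct on $\dim V$.

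Once the classification holds, the counting is elementary, using that $q$ is odd. Hermitian forms over $\F_{q^2}$ are unique up to isometry in each dimension, and alternating forms are unique in each even dimension. Symmetric forms of a given dimension come in exactly two classes, distinguished by the discriminant in $\F^\times/(\F^\times)^2$.
\begin{itemize}
\item For $\U{n}{\F}$, every multiplicity form is hermitian, so the partition alone determines the class, which gives part~1.
\item For $\Sp{n}{\F}$, the symmetric multiplicity spaces are those with $i$ even, so there are $2^r$ classes, with $r$ the number of distinct even block sizes occurring.
\item For $\mathrm{O}^\epsilon$, the symmetric multiplicity spaces are those with $i$ odd. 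Their discriminants are constrained by a single relation, since the product must match the discriminant of $\beta$ (which fixes $\epsilon$). This gives $2^{s-1}$ classes for $s > 0$. When $s = 0$, $V$ is a sum of hyperbolic pieces $W_i \otimes J_i$ with $i$ even, which forces $\epsilon = +$.
\end{itemize}
Here $r$ and $s$ should be read as numbers of distinct block sizes, and I would make that precise in the statement.

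Finally, I would treat the subgroups of determinant one (and spinor norm one) by the standard index argument. The $\mathcal{C}(\beta)$-class of $A$ splits in a normal subgroup $H$ of index $2$ exactly when $C_{\mathcal{C}(\beta)}(A) \subseteq H$. The centraliser maps onto $\prod_i \mathcal{C}(\beta_i)$ with unipotent kernel, so it suffices to compute determinants and spinor norms there.
\begin{itemize}
\item For $\SO{\epsilon}{n}{\F}$: if some $i$ is odd, a reflection in $\mathcal{C}(\beta_i)$ lifts to a centralising element of determinant $(-1)^i = -1$, so there is no splitting. If $s = 0$, every centralising element has determinant $1$, so the class splits into two.
\item For $\Omega^{\epsilon}(n,\F)$: I would use the analogous spinor-norm computation, which gives the stated splitting when there are no odd block sizes.
\item For $\SU{n}{\F}$: I would argue that the determinant map on $C_{\U{n}{\F}}(A)$ has the same image as the determinant map on $C_{\GL{n}{\F}}(A)$ restricted to norm-one scalars, and then invoke \cref{slconj}.
\end{itemize}
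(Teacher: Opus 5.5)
The paper gives no proof of this theorem; it only cites \cite{Fra25}. Your route is the standard one underlying that reference: Wall-type forms $\beta_i$ on the multiplicity spaces, classification by their isometry types, the centraliser $R\rtimes\prod_i\mathcal{C}(\beta_i)$, and the index-two splitting criterion.

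Your reading of $r$ and $s$ as numbers of \emph{distinct} block sizes is a necessary correction. Read literally, the identity of $\mathrm{O}^{+}(4,\F)$ would split into $2^{3}$ classes. Likewise the partition $(2,2)$ in $\Sp{4}{\F}$ would give $4$ classes, when it gives $2$. With that correction, parts 1 and 3--6 go through as you describe, modulo two things: the lifting step you rightly flag, and the routine check that every admissible choice of the $\beta_i$ is realised inside the fixed group.

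The genuine gap is the $\SU{n}{\F}$ step, and it cannot be repaired, because part 2 is false as stated. Write $\F=\F_{q^2}$, $Z:=\{x\in\F^\times \mid x^{q+1}=1\}$ and $d:=\gcd(\lambda_1,\dots,\lambda_k)$. Your own centraliser description gives $\det C_{\U{n}{\F}}(A)=Z^{d}$, whereas $\det C_{\GL{n}{\F}}(A)\cap Z=(\F^\times)^{d}\cap Z$. The equality you want to invoke fails precisely when $q$ is odd and $d$ is even with $v_2(d)<v_2(q^2-1)$, where $v_2$ is the $2$-adic valuation. In exactly that case, two $\SU{n}{\F}$-classes inside one $\U{n}{\F}$-class fuse in $\SL{n}{\F}$.

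Here is a concrete counterexample. Take $q=3$, $i^2=-1$ in $\F_9$, and the Gram matrix $\left(\begin{smallmatrix}0&1\\1&0\end{smallmatrix}\right)$. Then $u_{\pm}:=\left(\begin{smallmatrix}1&\pm i\\0&1\end{smallmatrix}\right)$ lie in $\SU{2}{\F_9}$.
\begin{enumerate}
\item They are conjugate in $\U{2}{\F_9}$ via $\mathrm{diag}(a,\bar a^{-1})$ with $a$ of order $8$.
\item Since $\det C_{\U{2}{\F_9}}(u_+)=\{\pm1\}$, every unitary element conjugating $u_+$ to $u_-$ has determinant of order $4$. So $u_\pm$ are not conjugate in $\SU{2}{\F_9}$.
\item Yet $\mathrm{diag}(i,-i)\in\SL{2}{\F_9}$ conjugates $u_+$ to $u_-$.
\end{enumerate}
What your index argument does prove is the correct replacement: the $\U{n}{\F}$-class of $A$ splits into $|Z:Z^{d}|=\gcd(\lambda_1,\dots,\lambda_k,q+1)$ classes of $\SU{n}{\F}$.
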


Any element $A \in G$ can be written uniquely as a product $A = SU = US$, where $S$ is semisimple and $U$ is unipotent. Then for $C \in G$ we have $C^{-1}AC = C^{-1}SUC = C^{-1}SCC^{-1}C$, with $C^{-1}SC$ and $C^{-1}C$ being unipotent and semisimple respectively. So if $A$ and $B$ are conjugate in $G$, so are their semisimple and unipotent parts. We have seen that the conjugacy classes of these two classes of elements are well studied, so the approach taken in \cite{Fra20} is to list all semisimple conjugacy classes of $G$ and for each representative list all classes having that fixed semisimple part.

We summarise the most straightforward results.

\begin{theorem}[Conjugacy of general elements]
	Let $A, B \in G$.
	\begin{enumerate}
		\item Let $G = \U{n}{\F}$, $C_U := C_{\U{n}{\F}}, C_{SU} := C_{\SU{n}{\F}}$ be the centralizers of the unitary group and the special unitary group respectively and $I := \abs{C_U : C_{SU}}$ be their index. Furthermore let $p_1^{m_1},\dots p_k^{m_k}$ be the elementary divisors of $A$. Then the conjugacy class of $A$ in $\GL{n}{\F}$ splits into 
		$$r := \frac{q+1}{I} = \gcd(m_1,\dots,m_k,q+1)$$ 
		distinct conjugacy classes. 
		\item For $G = \mathrm{O}^{\epsilon}(n,\F)$ and $q$ odd, the conjugacy class of $A$ in $\GL{n}{\F}$ splits into two distinct classes in $G$ if and only if $A$ has no elementary divisors $(X\pm 1)^m$ with $m$ odd. 
		\item Let $G = \Omega^{\epsilon}(n,\F)$ and $q$ be odd. Furthermore, let $A, B$ be conjugate in $\SO{\epsilon}{n}{\F}$, i.e. $A^C = B$ for some $C \in \SO{\epsilon}{n}{\F}$. Then $C$ lies in $\Omega^{\epsilon}(n,\F)$ if and only if for the decomposition $C = US$, as described before, $S$ lies in $\Omega^{\epsilon}(n,\F)$.
	\end{enumerate} 
\end{theorem}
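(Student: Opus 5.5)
The statement collects three separate splitting results, and I would prove all of them with the same orbit-counting principle. Let $H \trianglelefteq K$ be groups with $K/H$ abelian and let $A \in H$. Then the $K$-class of $A$ splits into exactly $\abs{K : H\,C_K(A)}$ distinct $H$-classes. The reason is that $K$ permutes the $H$-classes inside $A^K$ transitively, and the stabiliser of the class $A^H$ is $H\,C_K(A)$. So in each case the work reduces to computing the image of the centraliser $C_K(A)$ in the abelian quotient $K/H$. The two tools I would use for this are the decomposition $A = SU$ into semisimple and unipotent parts, and the primary and cyclic decompositions from \cref{primarydecomp} and \cref{cyclicdecomposition}.

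\textbf{Part 1.} I take $K = \U{n}{\F}$ and $H = \SU{n}{\F}$. The determinant identifies $K/H$ with the cyclic group of norm-one elements of $\F_{q^2}$, which has order $q+1$. The number of classes is therefore $(q+1)/\abs{\det(C_U(A))}$, and it remains to show that $\det(C_U(A))$ is the subgroup of $(q+1)/\gcd(m_1,\dots,m_k,q+1)$-th powers.

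For the inclusion "$\supseteq$", I use the decomposition of $V$ into primary cyclic summands. On a summand with elementary divisor $p_i^{m_i}$, and in the unitary setting after pairing $p_i$ with its conjugate-reciprocal $\tilde p_i$, I would construct isometries commuting with $A$ by scaling the summand by a norm-one scalar $\zeta$. Such an element has determinant $\zeta^{m_i\deg(p_i)}$; after normalising over the field of definition this becomes a power $\zeta^{m_i}$. Products of these give every element of the form $\zeta^{\gcd(m_i)}$.

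The reverse inclusion "$\subseteq$" is the delicate direction. I would argue that every element of $C_U(A)$ preserves each primary subspace, and that on a primary subspace its determinant is a product of determinants on the graded pieces $\kernel p(A)^{j}/\kernel p(A)^{j-1}$. This forces the determinant to lie in the subgroup generated by the $m_i$-th powers.

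\textbf{Part 2.} I take $K = \mathrm{O}^{\epsilon}(n,\F)$ and $H = \SO{\epsilon}{n}{\F}$, so $K/H \cong \{\pm 1\}$. The $\mathrm{O}$-class of $A$ splits exactly when $C_{\mathrm{O}}(A) \subseteq \mathrm{SO}$. First I restrict to the orthogonal decomposition of $V$ into the generalised eigenspaces for $\pm 1$ and their complement; these are non-degenerate because the pairing $p \leftrightarrow \tilde p$ respects the form.

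On the complement, every element of the centraliser has determinant $1$. The reason is that the eigenvalues come in pairs $\{\lambda, \lambda^{-1}\}$ with $\lambda \neq \pm 1$, and the centraliser acts on the paired kernels dually, so the determinants cancel.

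On the $\pm 1$ parts, $A$ is $\pm$ a unipotent element. If there is a Jordan block of odd size $m$, the corresponding summand carries a non-degenerate form, and $-1$ on that summand, extended by the identity, commutes with $A$ and has determinant $-1$. Conversely, if every block has even size, I would use the unipotent classification quoted above (the case $s = 0$) to show that the centraliser lies in $\mathrm{SO}$.

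\textbf{Part 3.} Here I take $K = \SO{\epsilon}{n}{\F}$ and $H = \Omega^{\epsilon}(n,\F)$, with the spinor norm $\theta\colon K \to \F^\times/(\F^\times)^2$ detecting $\Omega$. Since $\theta$ is a homomorphism, $\theta(C) = \theta(U)\theta(S)$. The key claim is $\theta(U) = 1$: a unipotent element has odd $p$-power order, so its image in a group of exponent $2$ is trivial. It follows that $C \in \Omega$ if and only if $S \in \Omega$.

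I expect the main obstacle to be the reverse inclusion in Part 1, namely pinning down the determinant image of the centraliser exactly. If a clean argument via the graded pieces fails, I would fall back on the explicit description of $C_U(A)$ as a product of unitary or general linear groups over extension fields, as in \cite{Fra20}, and read off the determinant map factor by factor.
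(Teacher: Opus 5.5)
The paper gives no proof of this theorem; the chapter summarises \cite{Fra20} and \cite{Fra25} without proof, so your argument has to stand on its own. You read Parts 1 and 2 as the splitting of the $\mathrm{U}$-class of $A\in\mathrm{SU}$ into $\mathrm{SU}$-classes, and of the $\mathrm{O}$-class of $A\in\mathrm{SO}$ into $\mathrm{SO}$-classes. That is the intended meaning; the printed version with the $\GL{n}{\F}$-class is garbled. The count $\abs{K:H\,C_K(A)}$ is the right framework. Part 3 is correct: $U$ is a power of $C$, hence lies in $\mathrm{SO}$, and it has odd order, so $\theta(U)=1$. One slip: $\det C_U(A)$ should be the subgroup of $r$-th powers in the norm-one group $Z\le\F_{q^2}^\times$, which has order $(q+1)/r$. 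It is not the subgroup of $(q+1)/r$-th powers.

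\textbf{Part 1 has a genuine gap in both directions.}

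Upper bound on $\det C_U(A)$. Your filtration argument gives $\det(g|_{V_p})=\prod_i\det(\bar g_i)^{i}$. Here $\bar g_i$ is induced on the multiplicity space $M_i:=D_i/D_{i+1}p(A)$, with $D_j:=\kernel p(A)^j/\kernel p(A)^{j-1}$. Knowing only $\det\bar g_i\in\F_{q^2}^\times$ confines $\det g$ to $(\F_{q^2}^\times)^{\gcd(m_j)}\cap Z$. This is in general larger than $Z^{\gcd(m_j)}$.
\begin{itemize}
\item Take $q=3$ and $A\in\mathrm{SU}(2,3)$ a single unipotent block of size $2$.
\item Every element of $Z$ is a square in $\F_9^\times$, so your bound is vacuous.
\item In fact $\det C_U(A)=\{\pm1\}$ and the class splits into $2$.
\end{itemize}
The missing idea is that $g$ induces on each $M_i$ an isometry of a non-degenerate sesquilinear form, obtained from $\beta$ by pairing through $p(A)^{i-1}$. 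For $p=\tilde p$ this form lives on $M_i(p)$ itself; otherwise it pairs $M_i(p)$ with $M_i(\tilde p)$. This forces $\det\bar g_i\in Z$, or forces the combined contribution $x\bar x^{-1}$ to lie in $Z$.

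Lower bound. The summands of \cref{primarydecomp} and \cref{cyclicdecomposition} ignore the form, so scaling one of them is in general not an isometry. You need an orthogonal decomposition into non-degenerate $A$-invariant pieces: either a single $\F_{q^2}[X]/(p^m)$ with $p=\tilde p$, or a dual pair when $p\neq\tilde p$. Even then, $\F_{q^2}$-scalars only give determinants $\zeta^{md}$ with $d=\deg p$. This is too small when $\gcd(d,q+1)>1$, for example $q=2$, $d=3$. What works is multiplication by elements $y$ of the residue field $\F_{q^{2d}}\subseteq\F_{q^2}[X]/(p^m)$:
\begin{itemize}
\item if $p=\tilde p$, take $y^{q^d+1}=1$; the determinant is $N(y)^m$;
\item on a dual pair, act by $y$ on one half and by its inverse adjoint on the other; the determinant is $N(y)^{(1-q)m}$.
\end{itemize}
These determinants exhaust $Z^m$; for $p=\tilde p$ this uses that $d$ is odd. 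The phrase ``normalising over the field of definition'' does not supply this.

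\textbf{Part 2.} Splitting off $V_{\pm}$ is fine. So is the determinant-one argument on the complement, done over $\overline{\F}$ with $E_\lambda$ dual to $E_{\lambda^{-1}}$. On $V_\pm$, however, both directions are asserted rather than proved.
\begin{itemize}
\item That an odd-size Jordan block splits off as a non-degenerate orthogonal summand is the standard decomposition of a unipotent isometry into single odd blocks and pairs of equal even blocks. The form-blind \cref{cyclicdecomposition} does not provide it.
\item For the converse you appeal to the quoted unipotent result. The paper also states that without proof, and it is exactly the unipotent case of Part 2.
\end{itemize}
The multiplicity-space argument settles the converse directly. On $V_+$ one has $\det(g)=\prod_i\det(\bar g_i)^i$, where $\bar g_i$ is an isometry of the non-degenerate bilinear form $(x,y)\mapsto\beta(x,y(A-1)^{i-1})$ on $M_i$. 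Hence $\det\bar g_i=\pm1$, and the product is $1$ when every block size $i$ is even. The same works on $V_-$ with $A+1$ in place of $A-1$.
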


We have seen that given two matrices of a finite classical group, one can check whether there exists a conjugating element in that group by computing the Jordan normal form and verifying a few extra requirements depending on the exact group. However, that does not guarantee that the conjugating matrix computed using the algorithm for the Jordan normal form lies in that group. The latter is, in general, a more difficult problem. In the future, it may be worth investigating the development of algorithms to compute such conjugating elements in their respective finite classical group.

%% file: literature.bib
@article{NeChe95,
	author = {Neumann, Peter and Praeger, Cheryl},
	year = {1995},
	month = {10},
	pages = {},
	title = {Cyclic Matrices Over Finite Fields},
	volume = {52},
	journal = {Journal of the London Mathematical Society. Second Series},
	doi = {10.1112/jlms/52.2.263}
}

@book{Fra25,
	AUTHOR = "Giovanni De Franceschi and Martin W. Liebeck and Eamonn A. O'Brien",
	TITLE = "Conjugacy in Finite Classical Groups",
	YEAR = "2025",
	PUBLISHER = "Springer Cham",
	DOI = "https://doi.org/10.1007/978-3-031-86461-2",
	EDITION = "1",
}

@misc{Fra20,
	title={Centralizers and conjugacy classes in finite classical groups}, 
	author={Giovanni De Franceschi},
	year={2020},
	doi = {10.48550/arXiv.2008.12651}, 
}

@misc{Gec23,
	author = {Meinolf Geck},
	title = {NoFoMa},
	year = {2023},
	publisher = {GitHub},
	journal = {GitHub repository},
	howpublished = {\url{https://github.com/geckmf/NoFoMa}},
	commit = {2aef3beeb5a75aae4220f777be0e640163b23e2c}
}

@book{Har70,
	AUTHOR = "Brian Hartley and Trevor O. Hawkes",
	TITLE = "Rings, Modules and Linear Algebra",
	YEAR = "1970",
	PUBLISHER = "Chapman and Hall",
}

@book{Gro01,
	title={Classical Groups and Geometric Algebra},
	author={Larry C. Grove},
	year={2001},
}

@article{Par84,
	author = "R. A. Parker",
	title = "The computer calculation of modular characters (the Meat-Axe)",
	journaltitle = "Computational Group Theory",
	year = "1984",
	pages = "267-274",
	publisher = "Academic Press",
}

@book{Hof71,
	author = {Hoffman, Kenneth and Kunze, Ray A.},
	title = {Linear Algebra},
	year = {1971},
	publisher = {Prentice Hall Inc.},
}

@book{Hal78,
	author = {Paul R. Halmos},
	title = {Finite-Dimensional Vector Spaces},
	year = {1978},
	publisher = {Springer-Verlag},
}

@article{St97,
	title = {A New Algorithm for the Computation of Canonical Forms of Matrices over Fields},
	journal = {Journal of Symbolic Computation},
	volume = {24},
	number = {3},
	pages = {409-432},
	year = {1997},
	url = {https://doi.org/10.1006/jsco.1996.0142},
	author = {Allan Steel},
}

@unpublished{Ple08,
	title = {Lineare Algebra II},
	year = {2008},
	author = {Wilhelm Plesken},
    note   = {Lecture notes, RWTH Aachen University},
}
